\documentclass[a4paper,11pt]{amsart}

\usepackage[all]{xy}

\usepackage[utf8]{inputenc}		
\usepackage[T1]{fontenc}
\usepackage[english]{babel}

\usepackage{amsfonts}			
\usepackage{amsmath}
\usepackage{amssymb}
\usepackage{amsthm}
\usepackage{mathtools}

\usepackage{graphicx}			
\usepackage{tikz}
	\usetikzlibrary{cd}

\usepackage{hyperref}			
	\hypersetup{colorlinks=false, urlcolor=black, linkcolor=black}

\usepackage{enumitem}			

\usepackage{color}				

\newcommand{\N}{\mathbb{N}}						
\newcommand{\Z}{\mathbb{Z}}						
\newcommand{\R}{\mathbb{R}}						

\renewcommand{\S}{\mathbb{S}}					


\newcommand{\Julia}{\mathcal{J}}				
\newcommand{\Fatou}{\mathcal{F}}				

\newcommand{\eps}{\varepsilon}					

\newcommand{\dd}								
	{\mathop{}\!\mathrm{d}}						
\newcommand{\ddn}[1]							
	{\mathop{}\!\mathrm{d^{#1}}}

\newcommand{\abs}[1]							
	{\left| #1 \right|}
\newcommand{\smallabs}[1]						
	{\lvert #1 \rvert}	
\newcommand{\norm}[1]							
	{\left\lVert #1 \right\rVert}	
\newcommand{\smallnorm}[1]						
	{\lVert #1 \rVert}						
\newcommand{\ip}[2]								
	{\left< #1 , #2 \right>}

\DeclareMathOperator{\id}{id}					
\DeclareMathOperator{\proj}{pr}					
\DeclareMathOperator{\intr}{int}				
\DeclareMathOperator{\len}{len}
\DeclareMathOperator{\Span}{span}
\DeclareMathOperator{\aff}{aff}					

\DeclareMathOperator{\tran}{T}				
\DeclareMathOperator{\ort}{O}


\DeclareMathOperator{\im}{im}					


\newcommand{\loc}{\mathrm{loc}}

\newcommand{\cE}{\mathcal{E}}
\newcommand{\cG}{\mathcal{G}}
\newcommand{\cH}{\mathcal{H}}

\newtheorem{thm}{Theorem}[section]{\bf}{\it}
\newtheorem{lemma}[thm]{Lemma}

\newtheorem{cor}[thm]{Corollary}

{\bf}{\it}
\newenvironment{customthm}[1]
	{\innercustomthm}
	{\endinnercustomthm}
	
{\bf}{\it}

\theoremstyle{definition}
\newtheorem{defn}[thm]{Definition}

\theoremstyle{remark}

\numberwithin{equation}{section}

\begin{document}

\title[Obstructions for automorphic and Latt\`es maps]{Obstructions for automorphic quasiregular maps and Latt\`es-type uniformly quasiregular maps}
\author{Ilmari Kangasniemi}
\address{Department of Mathematics and Statistics, P.O. Box 68 (Pietari Kalmin Katu 5), FI-00014 University of Helsinki, Finland}
\email{ilmari.kangasniemi 'at' helsinki.fi}

\thanks{This work was supported by the doctoral program DOMAST of the University of Helsinki and the Academy of Finland project \#297258.}
\subjclass[2010]{Primary 30C65; Secondary 57M12, 30D05, 20H15}
\keywords{Automorphic quasiregular maps, Uniformly quasiregular maps, Latt\`es maps, Euclidean group}

\begin{abstract}
	Suppose that $M$ is a closed, connected, and oriented Riemannian $n$-manifold, $f \colon \R^n \to M$ is a quasiregular map automorphic under a discrete group $\Gamma$ of Euclidean isometries, and $f$ has finite multiplicity in a fundamental cell of $\Gamma$. We show that if $\Gamma$ has a sufficiently large translation subgroup $\Gamma_T$, then $\dim \Gamma \in \{0, n-1, n\}$. If $f$ is strongly automorphic and induces a non-injective Latt\`es-type uniformly quasiregular map, then the same assertion holds without the assumption on the size of $\Gamma_T$. Moreover, an even stronger restriction holds in the Latt\`es case if $M$ is not a rational cohomology sphere.
\end{abstract}

\maketitle

\section{Introduction}

A continuous map $f \colon M \to N$ between oriented Riemannian $n$-manifolds is \emph{$K$-quasiregular} for $K \geq 1$ if $f$ belongs to the local Sobolev space $W^{1, n}_\loc (M, n)$ and $\abs{Df(x)}^n \leq K J_f(x)$ for almost every $x \in M$. Given a quasiregular map $f \colon \R^n \to M$, an element $\omega \in \R^n$ is a \emph{period} of $f$ if $f(x + \omega) = f(x)$ for all $x \in \R^n$, and $f$ is \emph{$k$-periodic} for $k \in \{1, \dots, n\}$ if its periods span a $k$-dimensional subspace of $\R^n$.

In \cite{Martio1975paper_kperiod}, Martio proved the following obstruction result on $k$-periodic quasiregular maps.

\begin{thm}[{\cite[Theorem 1.1]{Martio1975paper_kperiod}}]\label{thm:Martios_result}
	Let $f \colon \R^n \to M$ be a $k$-periodic quasiregular map, where $k > 0$ and $M$ is either $\R^n$ or $\S^n$. Suppose that $k \leq n-2$. Then $f$ has infinite multiplicity in its period strip.
\end{thm}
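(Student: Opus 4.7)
The plan is to argue by contradiction: suppose that $f$ has finite multiplicity $N < \infty$ in its period strip $P$.

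First, I would use the $k$-periodicity to factor $f$ as $f = \tilde f \circ \pi$, where $\pi \colon \R^n \to \T^k \times \R^{n-k}$ is the projection given by quotienting out the period lattice, and $\tilde f \colon \T^k \times \R^{n-k} \to M$ is quasiregular with the same distortion constant $K$. Writing $Y := \T^k \times \R^{n-k}$, the area formula for $\tilde f$ combined with quasiregularity gives
\begin{equation*}
	\int_{Y} \smallabs{D\tilde f}^n \leq K \int_{Y} J_{\tilde f} = K \int_P J_f \leq KN \vol(M) < \infty,
\end{equation*}
so $\tilde f$ has globally finite $n$-energy on $Y$. Moreover, by post-composing with stereographic projection (which is conformal and injective, and hence preserves both multiplicity and quasiregularity up to a comparable distortion constant), I would reduce to the case $M = \S^n$.

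The next step exploits both halves of the assumption $1 \leq k \leq n-2$. Since $k < n$, the top cohomology of $Y$ vanishes, $\derham^n(Y) \cong H^n(\T^k) = 0$, so the pullback $\tilde f^\ast \omega_{\S^n}$ of the Riemannian volume form on $\S^n$ is exact: $\tilde f^\ast \omega_{\S^n} = d\eta$ for some $(n-1)$-form $\eta$ on $Y$. On the other hand, since $n - k \geq 2$, the manifold $Y$ is $n$-parabolic; explicit logarithmic cutoffs $u_R(x, y) := \psi_R(\smallabs{y})$, with $\psi_R$ equal to $1$ on $[0,1]$, $0$ on $[R, \infty)$, and affine in $\log r$ between, give a sequence with $u_R \to 1$ locally and $\smallnorm{du_R}_{L^n(Y)}^n = O((\log R)^{-n})$ as $R \to \infty$.

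With these two ingredients in hand, Stokes' theorem and H\"older's inequality yield
\begin{equation*}
	\int_Y u_R \tilde f^\ast \omega_{\S^n} = \int_Y u_R\, d\eta = -\int_Y du_R \wedge \eta \leq \smallnorm{du_R}_{L^n(Y)}\smallnorm{\eta}_{L^{n/(n-1)}(Y)}.
\end{equation*}
Monotone convergence forces the left-hand side to approach $\int_Y J_{\tilde f}$, which is strictly positive since $\tilde f$ is non-constant and therefore (by Reshetnyak's theorem) open and discrete with $J_{\tilde f} > 0$ almost everywhere. The right-hand side will tend to zero as $R \to \infty$ provided $\eta$ can be chosen in $L^{n/(n-1)}(Y)$, producing the desired contradiction.

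The main obstacle is precisely this last point: producing a primitive $\eta$ with the correct integrability. My plan is to use the fact that $\tilde f$ takes values in the compact target $\S^n$ together with the contractibility of the $\R^{n-k}$-factor: applying a fibered Poincar\'e-type homotopy operator to $\tilde f^\ast \omega_{\S^n}$ along the $\R^{n-k}$-direction should yield an $\eta$ with a pointwise bound of the form $\smallabs{\eta}(p) \leq C \smallabs{D\tilde f(p)}^{n-1}$ almost everywhere, after which the finite-energy estimate from the first paragraph gives $\smallnorm{\eta}_{L^{n/(n-1)}(Y)}^{n/(n-1)} \leq C \int_Y \smallabs{D\tilde f}^n < \infty$. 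Handling the mixed K\"unneth bidegrees of $\tilde f^\ast \omega_{\S^n}$ on $\T^k \times \R^{n-k}$ when applying this operator is the delicate technical point that I expect to absorb most of the work.
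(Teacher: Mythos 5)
Your approach is genuinely different from the one in the paper (and from Martio's original proof, which the paper follows and extends): the paper's argument is geometric, built around Rickman's path-lifting construction, modulus estimates for lifted path families, and the Poletskii inequality, whereas you propose an analytic/cohomological argument via finite $n$-energy, $n$-parabolicity of $\T^k \times \R^{n-k}$, and Stokes' theorem against a log cutoff. Your reduction to $\tilde f$ on $Y = \T^k \times \R^{n-k}$, the energy bound $\int_Y \smallabs{D\tilde f}^n \leq K N \vol(M)$, the Künneth computation $\derham^n(Y) = 0$, and the estimate $\smallnorm{du_R}_{L^n(Y)}^n = O((\log R)^{-n})$ are all correct.

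However, there is a genuine gap precisely at the point you flag as ``the delicate technical point'': the required primitive $\eta \in L^{n/(n-1)}(Y)$ cannot be produced by the pointwise bound $\smallabs{\eta}(p) \leq C \smallabs{D\tilde f(p)}^{n-1}$, and in fact cannot be produced at all in the way sketched. The clearest evidence is that your argument, as written, nowhere uses the hypothesis $k \leq n-2$: the log-cutoff estimate already holds for every $k \geq 1$ (the $n$-parabolicity of $Y$ follows from its degree-$(n-k)$ volume growth with $n - k \leq n$, so $n-k \geq 2$ is not what is needed there), and the primitive construction you describe is insensitive to $k$. If the argument went through, it would therefore also apply to $k = n-1$, contradicting the map $e^z \colon \C \to \C^*$ ($n=2$, $k=1$, multiplicity $1$ in its period strip). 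Concretely, for $e^z$ one can check that every primitive of $\tilde f^* \omega_{\S^2}$ fails to lie in $L^2(Y)$: a bounded primitive of $\omega_{\S^n}$ on $\S^n$ minus a point does not exist (Stokes on small spheres around the omitted point forces a singularity), and the Poincar\'e homotopy operator on $\R^{n-k}$ is nonlocal, so neither route gives a pointwise control of $\eta$ by $\smallabs{D\tilde f}^{n-1}$. The heart of Martio's proof is a length estimate for lifted paths which is where $k \leq n-2$ genuinely enters (via the topology and geometry of the period strip near infinity, see the discussion around Lemma 6.2 and Sections 7--8 of the paper), and there is no analogue of that step in your outline; to salvage your route you would need a substantive $L^p$-cohomology input that breaks exactly at $k = n-1$, not merely a Poincar\'e-lemma bound.
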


In this paper, we investigate generalizations of Theorem \ref{thm:Martios_result} for quasiregular maps which are automorphic with respect to a discrete group of Euclidean isometries. Recall that if $\Gamma$ is a group acting on $\R^n$, then a map $f \colon \R^n \to M$ is \emph{automorphic} with respect to $\Gamma$ if $f \circ \gamma = f$ for every $\gamma \in \Gamma$. Both $k$-periodic and automorphic quasiregular mappings have seen significant amounts of study, with a large portion of it being due to Martio and Srebro; see eg.\ \cite{Martio1975paper_kperiod}, \cite{MartioSrebro1975paper2}, \cite{MartioSrebro1975paper1}, and \cite{MartioSrebro1999paper}. In addition, automorphic quasiregular maps have seen application in the definition of Latt\`es-type uniformly quasiregular maps, which have been studied eg.\ in \cite{Mayer1997paper}, \cite{Mayer1998paper}, \cite{MartinMayer2003paper}, \cite{Astola-Kangaslampi-Peltonen}, and \cite{FletcherMacclure2017preprint}.

For our first main result, we prove an automorphic generalization of Theorem \ref{thm:Martios_result}. However, the result requires an extra assumption on the dimension of the translation subgroup of $\Gamma$. 

\begin{thm}\label{thm:generalized_martio_periodicity_result}
	Let $M$ be a closed oriented Riemannian $n$-manifold, and let $f \colon \R^n \to M$ be quasiregular. Suppose that $h$ is automorphic with respect to a discrete subgroup $\Gamma$ of the isometry group $E(n)$ of $\R^n$, and let $\Gamma_T$ denote the subgroup of translations of $\Gamma$. If $0 < \dim \Gamma < n$ and
	\begin{equation}\label{eq:dimension_limitation}
		\frac{\dim \Gamma_T}{\dim \Gamma} > \frac{1}{n-\dim \Gamma},
	\end{equation}
	then $f$ has infinite multiplicity in a fundamental cell of $\Gamma$.
\end{thm}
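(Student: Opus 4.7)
The plan is to argue by contradiction: suppose $f$ has finite multiplicity $N$ in some fundamental cell $D$ of $\Gamma$, and derive a contradiction from \eqref{eq:dimension_limitation}. The strategy is to play a Jacobian-growth bound coming from finite multiplicity against a cohomological Stokes-type estimate on a large ball $B(0,R) \subset \R^n$.

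The first step is to establish polynomial Jacobian growth. The change-of-variables formula and finite multiplicity give $\int_D J_f \leq N \vol(M)$, and by automorphy this bound transfers to every $\Gamma$-translate of $D$. Since $\Gamma_T$ has finite index in $\Gamma$ by Bieberbach's structure theorem for discrete subgroups of $E(n)$, the $\Gamma$-orbit of any point grows polynomially at rate exactly $\dim \Gamma_T$, so the number of translates of $D$ meeting $B(0, R)$ is $O(R^{\dim \Gamma_T})$. This yields $\int_{B(0,R)} J_f = O(R^{\dim \Gamma_T})$, and by the quasiregularity distortion inequality $\int_{B(0,R)} |Df|^n = O(R^{\dim \Gamma_T})$.

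Next I would set up a cohomological pairing adapted to $d := \dim \Gamma$. I would pick a nontrivial class $[\omega] \in H^d_{\mathrm{dR}}(M)$ together with a Poincar\'e-dual class $[\eta] \in H^{n-d}_{\mathrm{dR}}(M)$ normalised so $\int_M \omega \wedge \eta = 1$; a degree in which $M$ has no nontrivial class must be handled separately, but for the main estimate the existence of such a pair follows from Poincar\'e duality on the closed oriented manifold $M$. The pullbacks $\alpha = f^*\omega$ and $\beta = f^*\eta$ are closed, $\Gamma$-automorphic forms on $\R^n$, with $\int_{B(0,R)} |\alpha|^{n/d}$ and $\int_{B(0,R)} |\beta|^{n/(n-d)}$ both $O(R^{\dim \Gamma_T})$ by H\"older combined with the Jacobian bound. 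Since $\R^n$ is acyclic, $\alpha = d\phi$ for some primitive $\phi$, which I would choose, using the $\Gamma$-automorphy of $\alpha$, to be automorphic modulo controlled growth. Stokes' theorem on $B(0, R)$ gives
\[
\int_{B(0,R)} \alpha \wedge \beta \;=\; \int_{\partial B(0,R)} \phi \wedge \beta.
\]
The left-hand side, by non-triviality of the cup product $[\omega] \cup [\eta]$ and a Birkhoff-type average over the compact factor of $\R^n/\Gamma$, grows proportionally to the number of $\Gamma$-cells in $B(0, R)$, i.e.\ at rate $R^{\dim \Gamma_T}$; the right-hand side is controlled by H\"older and a Sobolev--Poincar\'e inequality on annular shells of $\R^n / \Gamma_T$. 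Bookkeeping the exponents shows the right-hand side is of order $R^s$ where the inequality $s < \dim \Gamma_T$ reduces precisely to \eqref{eq:dimension_limitation}, producing the contradiction.

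The main obstacle is the construction and Sobolev control of the primitive $\phi$. The naive Poincar\'e-lemma formula yields a primitive that grows linearly in $|x|$ and defeats the boundary estimate; the remedy is to average over orbits of the translational subgroup $\Gamma_T$ and invert $d$ via a Bogovski\u{\i}-type operator on a single fundamental domain, producing a $\phi$ whose $L^{n/(d-1)}$ norm on $\partial B(0, R)$ is controlled by the $L^{n/d}$ norm of $\alpha$ on an adjacent shell. Tracking these exponents through the H\"older--Sobolev cascade and verifying that the resulting exponent $s$ is strictly less than $\dim \Gamma_T$ exactly under \eqref{eq:dimension_limitation} is the crux of the technical calculation.
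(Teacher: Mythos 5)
Your proposal takes a genuinely different route from the paper (a cohomological Stokes pairing versus the paper's path-family modulus argument following Martio and Rickman), but it contains a fatal error right at the start and a fundamental obstacle that blocks the central case.

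\textbf{Fatal error.} You write that ``$\Gamma_T$ has finite index in $\Gamma$ by Bieberbach's structure theorem for discrete subgroups of $E(n)$,'' and conclude that the $\Gamma$-orbit of a point grows at rate exactly $\dim\Gamma_T$. Bieberbach's first theorem applies only when $\R^n/\Gamma$ is compact, i.e.\ $\dim\Gamma = n$, which is outside the range $0 < \dim\Gamma < n$ in the hypothesis. The paper explicitly constructs a counterexample in Section~\ref{subsect:translation_subgroup}: a discrete group of screw motions in $\R^3$ with $\dim\Gamma = 1$ but $\Gamma_T = \{\id_{\R^n}\}$, so $\Gamma_T$ has infinite index. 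In general, the orbit growth (Lemma~\ref{lem:dimension_growth_interpretation}) is $\Theta(R^{\dim\Gamma})$, not $O(R^{\dim\Gamma_T})$, so your Jacobian bound should read $\int_{B(0,R)} J_f = O(R^{\dim\Gamma})$. This replaces $\dim\Gamma_T$ by $\dim\Gamma$ throughout your exponent bookkeeping, and there is no reason the surviving inequality should then reduce to~\eqref{eq:dimension_limitation} — which is precisely the mechanism by which the condition on $\dim\Gamma_T$ is supposed to enter the argument.

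\textbf{Fundamental obstacle.} The pairing step requires a nontrivial class $[\omega]\in H^d_{\mathrm{dR}}(M)$ for $d=\dim\Gamma$ with $0<d<n$. For the central case $M=\S^n$ (Martio's original setting, and the one of principal interest for Latt\`es maps) $H^d(\S^n)=0$ for all $0<d<n$, so no such class exists and the entire cohomological machinery is empty. You flag this as a ``case to handle separately'' but give no indication of what would replace it, and it is not a degenerate side case. In addition, the positivity and growth rate you claim for $\int_{B(0,R)}\alpha\wedge\beta$ are not automatic: $\omega\wedge\eta$ is cohomologically nontrivial but need not be sign-definite as a form, so the pullback integral could oscillate; the ``Birkhoff-type average'' you invoke to force growth would be a substantial equidistribution argument that you have not sketched. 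For comparison, the paper's proof (Sections~\ref{sect:path_lifting_short}--\ref{sect:main_theorem}) avoids all of this by working with the modulus of a concretely constructed family of paths in $U$, using the length estimate of Lemma~\ref{lem:length_estimate} — which is exactly where the growth rate $\Theta(r^{k-l})$ of $\ort(G)$ from Lemma~\ref{lem:orth_part_growth_estimate} brings $\dim\Gamma_T$ into play — together with a Poletskii-type inequality, and needs no cohomological input on $M$ at all.
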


We define a \emph{fundamental cell} of $\Gamma$ to be a connected set $D \subset \R^n$ such that $D$ contains exactly one point from every orbit of $\Gamma$ and $\partial D$ has Lebesgue measure zero. If $f$ is automorphic with respect to $\Gamma$, then $f$ has the same multiplicity in every fundamental cell of $\Gamma$.

Moreover, if $\Gamma \leqslant E(n)$ is discrete, the \emph{dimension} $\dim \Gamma$ of $\Gamma$ is the largest $k \in \N$ for which $\Gamma$ contains an isomorphic copy of $\Z^k$; a more precise exposition is given in Section \ref{subsect:dimension_and_growth}. It is worth noting that there exist discrete $\Gamma \leqslant E(n)$ without a $\dim \Gamma$-dimensional translation subgroup; see Section \ref{subsect:translation_subgroup} for details.

Our proof of Theorem \ref{thm:generalized_martio_periodicity_result} follows Martio's proof of Theorem \ref{thm:Martios_result}, which in turn is based on a construction of Rickman \cite{Rickman1975paper}. The fact that the target space is more general than just $\R^n$ or $\S^n$ is handled by a simple application of a bilipschitz chart, and ends up causing very little difference in the proof. However, changing $k$-periodicity into automorphicness breaks Martio's method of obtaining a crucial length estimate. We recover the result in our case by a slightly more refined method of obtaining the length estimate, but our estimate is sufficient to follow Martio's proof only when \eqref{eq:dimension_limitation} holds. 

We remark that \eqref{eq:dimension_limitation} in fact implies that $\dim \Gamma \leq n-2$, which more closely mirrors the assumption in Martio's original result. In particular, if $\dim \Gamma_T = \dim \Gamma$, then \eqref{eq:dimension_limitation} is equivalent with $\dim \Gamma \leq n-2$. However, due to the exceptionality of the scenario which would cause the proof to fail, we conjecture that the condition \eqref{eq:dimension_limitation} can be weakened to $\dim \Gamma \leq n-2$.

\medskip
\emph{Update:} After this paper had been available for some time in pre-print form, Sylvester Eriksson-Bique pointed out a way to greatly improve Theorem \ref{thm:generalized_martio_periodicity_result} when we were both visiting IM PAN in Warsaw. His improvement yields the following result, included in this paper with his permission.

\begin{thm}\label{thm:Sylvesters_version}
	Let $M$ be a closed oriented Riemannian $n$-manifold, and let $f \colon \R^n \to M$ be quasiregular. Suppose that $h$ is automorphic with respect to a discrete subgroup $\Gamma$ of the isometry group $E(n)$ of $\R^n$. If $0 < \dim \Gamma < n-2$, then $f$ has infinite multiplicity in a fundamental cell of $\Gamma$.
\end{thm}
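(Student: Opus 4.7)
The plan is to argue by contradiction along the same lines as the proof of Theorem \ref{thm:generalized_martio_periodicity_result}: assume that $f$ has finite multiplicity $N$ in some fundamental cell $D$ of $\Gamma$, and derive a contradiction from the asymptotic behavior of $\int_{B(0,R)} |Df|^n$ as $R \to \infty$.

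For the upper bound, I would use the polynomial growth of $\Gamma$ to count translates of $D$. Choosing $D$ of Dirichlet type and noting that $\gamma D \cap B(0,R) \neq \emptyset$ forces $\gamma \cdot 0 \in B(0, 2R)$, the count of relevant $\gamma$ is at most $\#(\Gamma \cdot 0 \cap B(0, 2R)) \leq C R^{\dim \Gamma}$. Combining this with finite multiplicity in each translate, $\vol(M) < \infty$, and $K$-quasiregularity yields
\begin{equation*}
	\int_{B(0,R)} |Df|^n \;\leq\; K \int_{B(0,R)} J_f \;\leq\; K N \vol(M) \cdot \#\{\gamma \in \Gamma : \gamma D \cap B(0,R) \neq \emptyset\} \;\leq\; C_1 R^{\dim \Gamma}.
\end{equation*}

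For the lower bound, the goal is an estimate of the form $\int_{B(0,R)} |Df|^n \geq c R^{n-2}$, which together with the upper bound forces the impossible inequality $R^{n-2} \leq C_1 R^{\dim\Gamma}$ when $\dim\Gamma < n-2$. The natural strategy is a modulus-of-curves argument of Rickman--Poletsky type, applied to a curve family or condenser built from a pair of disjoint open sets $U_0, U_1 \subset f(\R^n)$: the $\Gamma$-automorphy places many disjoint geometric copies of $f^{-1}(U_0)$ and $f^{-1}(U_1)$ inside $B(0,R)$, and the corresponding $n$-modulus, fed into the Poletsky--Rickman inequality, is meant to yield the desired energy lower bound for $\int_{B(0,R)}|Df|^n$.

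The critical obstacle, and presumably the heart of Eriksson-Bique's improvement, is extracting the sharp exponent $n-2$ in the lower bound. Martio's original proof of Theorem \ref{thm:Martios_result} exploits the rigid $k$-periodic lattice directly; in the automorphic setting this is unavailable, and the refined length estimate underlying Theorem \ref{thm:generalized_martio_periodicity_result} only recovers the bound under the technical hypothesis \eqref{eq:dimension_limitation}. Eriksson-Bique's refinement should be a modulus or capacity argument that dispenses with any assumption on the translation subgroup and relies only on the transverse codimension $n - \dim\Gamma \geq 3$; consistent with this, the borderline case $\dim\Gamma = n-2$ conjectured after Theorem \ref{thm:generalized_martio_periodicity_result} seems to lie beyond the reach of this approach.
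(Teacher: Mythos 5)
The outlined upper bound $\int_{B(0,R)}\smallabs{Df}^n \leq C_1 R^{\dim\Gamma}$ is reasonable, but the lower bound -- the actual crux -- is never established, and your guess about the nature of Eriksson-Bique's improvement is off the mark. It is not a refined modulus or capacity argument exploiting transverse codimension; it is an elementary linear-algebraic observation that replaces Lemma \ref{lem:length_estimate} with a sharp length estimate. Recall that the proof needs the lifted paths $\sigma'$ from $L_1$ to $GL_2$ (Lemma \ref{lem:lifting_result}) to satisfy $\len(\sigma')\gtrsim s$ where $s=\inf_t d(V,\sigma'(t))$. In Martio's periodic case this is trivial by taking $L_1=-L_2$; in the automorphic case the rotational parts $\ort(\gamma)\vert_{V^\perp}$ can swing $\gamma L_2$ back toward $L_1$, which is why Theorem \ref{thm:generalized_martio_periodicity_result} only obtained a degraded exponent under the extra hypothesis \eqref{eq:dimension_limitation}. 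The remedy: the finite-index subgroup $G$ of Theorem \ref{thm:isometry_group_fact} is abelian, so the image of $G$ in $O(n-k)$ under $\gamma\mapsto \ort(\gamma)\vert_{V^\perp}$ is a commuting family of diagonalizable real matrices, hence simultaneously diagonalizable; this decomposes $V^\perp$ into pairwise orthogonal $G$-invariant real subspaces $Q_1,\ldots,Q_l$ of dimension $1$ or $2$. Since $\dim V^\perp=n-k>2$ by hypothesis, there are at least two blocks $Q_i$, so one may take $L_1\subset Q_1$ and $L_2\subset Q_2$; then $L_1\perp\gamma L_2$ for every $\gamma\in G$, giving $\len(\sigma')\geq\sqrt{2}\,s$ (Lemma \ref{lem:length_estimate_orbit_version}) with no dependence on $\Gamma_T$. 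With this the rest of the argument runs exactly as in Section \ref{sect:main_theorem}. Your proposed energy-comparison scheme might be reorganizable around the same idea, but as written it treats the crucial lower bound as a black box and mislocates where $\dim\Gamma<n-2$ actually enters: it is used to guarantee two invariant blocks in $V^\perp$, not to drive a capacity estimate.
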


Therefore, by combining Theorems \ref{thm:Sylvesters_version} and \ref{thm:generalized_martio_periodicity_result}, the remaining case where an automorphic generalization of Martio's Theorem \ref{thm:Martios_result} is unknown is when $\dim \Gamma = n-2$, and $\dim \Gamma_T \leq (\dim \Gamma)/2$. Eriksson-Bique's improvement is to use the fact that abelian groups of matrices are simultaneously diagonalizable; we explain the short argument in Section \ref{sect:sylvesters_improvement}.

\medskip

For the second main result, we consider Theorem \ref{thm:Martios_result} in the setting of Latt\`es-type uniformly quasiregular mappings, which we define as follows.

\begin{defn}
	Let $M$ be a closed, connected, and oriented Riemannian manifold of dimension $n$. We call a triple $(\Gamma, h, A)$ a \emph{Latt\`es triple into $M$} if
	\begin{itemize}
		\item $\Gamma$ is a discrete subgroup of $E(n)$;
		\item $h \colon \R^n \to M$ is a quasiregular map which is \emph{strongly automorphic} with respect to $\Gamma$: that is, $h$ is automorphic with respect to $\Gamma$, and if $h(x) = h(y)$, then $x = \gamma(y)$ for some $\gamma \in \Gamma$;
		\item $A \colon \R^n \to \R^n$ is a linear conformal bijection with $A \Gamma A^{-1} \subset \Gamma$.
	\end{itemize}
	A uniformly quasiregular map $g \colon M \to M$ is a \emph{Latt\`es map} if there exists a Latt\`es triple $(\Gamma, h, A)$ into $M$ for which $g \circ h = h \circ A$.
\end{defn}

Recall that a quasiregular self-map $g \colon M \to M$ is \emph{uniformly quasiregular} if there exists $K \geq 1$ for which every iterate $g^{j}$ of $g$ is $K$-quasiregular, $j > 0$. If $(\Gamma, h, A)$ is a Latt\`es triple, then it induces a uniformly quasiregular map $g \colon h(\R^n) \to h(\R^n)$ for which $g \circ h = h \circ A$: see e.g. Iwaniec--Martin \cite[Theorem 21.4.1]{IwaniecMartin2001book} or Astola--Kangaslampi--Peltonen \cite[Theorem 2.3]{Astola-Kangaslampi-Peltonen}. A Latt\`es map is therefore an extension of such a map $g$ to the whole manifold $M$. Note that, for closed manifolds $M$, the Picard-type theorem of Holopainen and Rickman \cite{HolopainenRickman1992paper} implies that $M \setminus h(\R^n)$ is finite. 

For our second main result, we show that for Latt\`es triples, the problems caused by automorphicness in Theorem \ref{thm:generalized_martio_periodicity_result} are in fact completely avoided. Namely, while a general map can be strongly automorphic with respect to a discrete $\Gamma \leqslant E(n)$ without being $(\dim \Gamma)$-periodic, the other conditions of Latt\`es triples present an obstruction to this in practically all interesting situations. More precisely, we have the following.

\begin{thm}\label{prop:translations_under_A-condition}
	Let $\Gamma \leqslant E(n)$ be a discrete group of isometries of $\R^n$, and let $A \colon \R^n \to \R^n$ be a linear conformal map. Suppose that $A$ is expanding and $A \Gamma A^{-1} \subset \Gamma$. Then $\dim \Gamma_T = \dim \Gamma$, where $\Gamma_T$ is the translation subgroup of $\Gamma$.
\end{thm}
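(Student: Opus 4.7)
The plan is to induct on $n$, using a descent to the orthogonal complement of the span of translations. Set $\Gamma_T = \Gamma \cap \R^n$ and $V := \R\Gamma_T$, so that $\Gamma_T$ is a full-rank lattice in $V$. Since conjugation by $A$ sends a translation $(I, w)$ to $(I, Aw)$, one has $A\Gamma_T \subset \Gamma_T$, and hence $V$ is $A$-invariant. Writing $A = \lambda R$ with $\lambda > 1$ and $R \in O(n)$, it follows that both $V$ and $V^\perp$ are $R$-invariant. The identity $\dim \Gamma_T = \dim \Gamma$ is equivalent to the assertion that $\Gamma/\Gamma_T$ is a torsion group, i.e.\ that every $\gamma = (L, v) \in \Gamma$ has $L$ of finite order.

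The first observation is that $L$ preserves the lattice $\Gamma_T$, because $\gamma (I, w) \gamma^{-1} = (I, Lw)$. Thus $L|_V \in O(V)$ preserves $\Gamma_T$, and Kronecker's theorem on algebraic integers lying on the unit circle implies that $L|_V$ has finite order. It therefore suffices to control $L|_{V^\perp}$ for every $\gamma \in \Gamma$.

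Assuming $V \neq 0$, the plan is to descend: define $\pi \colon \Gamma \to E(V^\perp)$ by $\pi(L, v) = (L|_{V^\perp}, v_{V^\perp})$, and let $\bar\Gamma = \pi(\Gamma)$. Then $\bar\Gamma$ is normalized by the restricted expanding conformal map $\bar A = A|_{V^\perp}$. Two claims will then allow applying the inductive hypothesis to $(\bar\Gamma, \bar A, V^\perp)$. First, $\bar\Gamma$ is discrete in $E(V^\perp)$: given $\bar\gamma_n \to \mathrm{id}$, one uses that the possible values of $L_n|_V$ form a finite set to assume $L_n|_V$ is constant, and then multiplies each $\gamma_n$ on the left by a $\Gamma_T$-translation chosen so that the $V$-component of the new translation part lies in a fundamental domain of $\Gamma_T$; the resulting sequence lies in a bounded set of $E(n)$ and hence is eventually constant by discreteness of $\Gamma$, which combined with $\bar\gamma_n \to \mathrm{id}$ forces $\bar\gamma_n = \mathrm{id}$ for large $n$. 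Second, $\bar\Gamma$ has trivial translation subgroup: if $(I, w) \in \bar\Gamma$ lifts to $\gamma = (L|_V \oplus I_{V^\perp}, v) \in \Gamma$, then $\gamma^{\mathrm{ord}(L|_V)}$ is a pure translation lying in $\Gamma_T \subset V$ whose $V^\perp$-component equals $\mathrm{ord}(L|_V) \cdot w$, forcing $w = 0$. The induction then yields $\dim \bar\Gamma = 0$, so $L|_{V^\perp}$ has finite order for every $\gamma \in \Gamma$.

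The main obstacle is the base case $V = 0$, where $\Gamma_T$ is trivial and the descent does not reduce the dimension. Here one must directly show that every $\gamma \in \Gamma$ has finite order. Finite-order linear parts $L$ force $\gamma^{\mathrm{ord}(L)} \in \Gamma_T$ to be the identity, and infinite-order $L$ with vanishing $v_\parallel$ (the projection of $v$ onto $\ker(I-L)$) lead to a non-discrete cyclic subgroup $\langle \gamma \rangle$, contradicting the discreteness of $\Gamma$. The delicate remaining case is a true screw motion ($L$ of infinite order, $v_\parallel \neq 0$), where the expanding condition $\lambda > 1$ must be exploited: for instance, if $L$ commutes with $RLR^{-1}$, then $(A\gamma A^{-1})\gamma^{-1} \in \Gamma$ computes to a translation $(I, (\lambda R - I)v)$, which must vanish by $\Gamma_T = 0$, forcing $\lambda = 1$ and yielding a contradiction; the non-commuting configurations require a finer iterated commutator argument to propagate the same obstruction across the orbit $\{A^k \gamma A^{-k}\}_{k \geq 0}$.
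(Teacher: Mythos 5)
Your reduction to the case $\Gamma_T = 0$ is a nice and genuinely different route from the paper's argument, and the pieces you do supply are correct: conjugation of $\Gamma_T$ by the linear part $L$ of any $\gamma \in \Gamma$ preserves the lattice, so $L\vert_V$ is an integral orthogonal matrix and Kronecker forces it to have finite order; the projection $\pi \colon \Gamma \to E(V^\perp)$ is a homomorphism commuting with $A$-conjugation; $\bar\Gamma = \pi(\Gamma)$ is discrete (the fundamental-domain normalization argument is right, once one remembers that $\Gamma$ is \emph{closed} and discrete, so bounded subsets are finite); and $\bar\Gamma$ has trivial translation subgroup by the $\gamma^{\mathrm{ord}(L\vert_V)}$ computation. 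So the inductive step cleanly reduces everything to the assertion: \emph{if $\Gamma_T$ is trivial then $\dim \Gamma = 0$}. That reduction is worth keeping in mind, since it isolates the essential difficulty.

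However, that base case is not a peripheral technicality --- it \emph{is} the theorem, and your sketch of it has a genuine gap. You correctly dispatch $\gamma = (L,v)$ with $L$ of finite order, and $L$ of infinite order with $v$ in the image of $I-L$ (the orbit $\{\gamma^m\}$ is then bounded, contradicting discreteness). You also correctly compute that when $R = \ort(A)$ commutes with $L$, the element $(A\gamma A^{-1})\gamma^{-1} \in \Gamma$ is the translation by $(A-I)v$, which cannot vanish for $v \neq 0$ because $A$ is expanding. But the case where $R$ does not commute with $L$ is left as ``a finer iterated commutator argument,'' and that is precisely where the paper's two key tools enter: (i) Wolf's/Bieberbach's structure theorem producing a finite-index normal \emph{abelian} subgroup $G$ with an associated cocompact translation pair (the paper's Theorem \ref{thm:isometry_group_fact} and Lemma \ref{lem:conjugation_map_linearization}), and (ii) Wolf's commutation lemma for $O(n)$ near the identity (the paper's Lemma \ref{lem:wolf_conjugation_lemma}). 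The paper replaces $A$ by a power so that $R \in U_n$, picks a screw motion $g \in G$, finds $m$ with $A g^m A^{-1} \in G$, and then picks $l$ so that $\ort(g^{-lm}) \in U_n$; since both $g^{-lm}$ and $[g^{-lm}, A]$ lie in the abelian $G$, they commute, and Wolf's lemma upgrades this to $\ort(g^{-lm})$ commuting with $R$. Only then does the commutator collapse to a nontrivial translation. Without the abelian $G$ and Wolf's lemma, there is no reason the commutator is a translation, and the obstruction does not ``propagate across the orbit'' on its own. So, as written, the proof is incomplete precisely at its hardest point; to finish it you would need to import essentially the same machinery the paper uses.
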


We note that something akin to the conclusion of Theorem \ref{prop:translations_under_A-condition} appears to be considered true in the literature; see Mayer \cite[Proposition 3.1]{Mayer1998paper}, where Martio's Theorem \ref{thm:Martios_result} is used to derive a restriction on the dimension of $\Gamma$ for Latt\`es triples $(\Gamma, h, A)$ into $\S^n$. Since Theorem \ref{thm:generalized_martio_periodicity_result} is given for closed manifold targets, this method of proof immediately gives a version of \cite[Proposition 3.1]{Mayer1998paper} for more general closed manifolds.

\begin{cor}\label{cor:dimension_restriction_for_manifolds}
	Let $M$ be a closed, connected, and oriented Riemannian $n$-manifold, and let $(\Gamma, h, A)$ be a Latt\`es triple into $M$. Suppose that $A$ is expanding. Then $\dim \Gamma \in \{0, n-1, n\}$.
\end{cor}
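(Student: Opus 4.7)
The plan is a direct combination of the earlier results. The key observation is that strong automorphicness of $h$ forces the multiplicity of $h$ on a fundamental cell of $\Gamma$ to be exactly $1$: if $D$ is a fundamental cell, then by definition $D$ meets each $\Gamma$-orbit in a single point, and the defining property of strong automorphicness (the reverse implication $h(x)=h(y) \Rightarrow x = \gamma y$) makes $h|_D$ injective off a set of measure zero. In particular, $h$ has finite multiplicity in any fundamental cell of $\Gamma$, so the obstruction results of Theorem \ref{thm:generalized_martio_periodicity_result} and Theorem \ref{thm:Sylvesters_version} are available as tools to exclude values of $\dim \Gamma$.

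Next I would apply Theorem \ref{prop:translations_under_A-condition}. Since $A$ is expanding and $A\Gamma A^{-1} \subset \Gamma$, this result gives $\dim \Gamma_T = \dim \Gamma$. With this equality in hand, the dimensional condition \eqref{eq:dimension_limitation} in Theorem \ref{thm:generalized_martio_periodicity_result} reduces, upon dividing, to $1 > 1/(n - \dim \Gamma)$, equivalently $\dim \Gamma < n - 1$. Hence Theorem \ref{thm:generalized_martio_periodicity_result} applies whenever $0 < \dim \Gamma \leq n - 2$.

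Now I would argue by contradiction. Suppose $\dim \Gamma \notin \{0, n-1, n\}$; since $\dim \Gamma$ is a nonnegative integer bounded above by $n$ (as $\Gamma$ acts discretely on $\R^n$), this means $0 < \dim \Gamma \leq n - 2$. The previous paragraph then shows that Theorem \ref{thm:generalized_martio_periodicity_result} applies to $h$ and forces $h$ to have infinite multiplicity in a fundamental cell of $\Gamma$. This contradicts the finite (in fact, unit) multiplicity established in the first step. Therefore $\dim \Gamma \in \{0, n-1, n\}$, as claimed.

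There is no real obstacle here once one has the two main theorems; the proof is essentially bookkeeping. The only point requiring a moment of care is confirming that strong automorphicness yields finite multiplicity of $h$ on a fundamental cell, but this follows immediately from the definitions. Note that one could alternatively invoke Theorem \ref{thm:Sylvesters_version} in place of Theorem \ref{thm:generalized_martio_periodicity_result} to kill the range $0 < \dim \Gamma < n-2$, and then handle the single remaining case $\dim \Gamma = n-2$ using Theorem \ref{thm:generalized_martio_periodicity_result} together with $\dim \Gamma_T = \dim \Gamma$ from Theorem \ref{prop:translations_under_A-condition}; but either route gives the corollary cleanly.
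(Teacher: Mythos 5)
Your proof is correct and follows essentially the same route the paper indicates: Theorem \ref{prop:translations_under_A-condition} yields $\dim \Gamma_T = \dim \Gamma$, which turns condition \eqref{eq:dimension_limitation} into $\dim \Gamma \leq n-2$, and strong automorphicness gives unit (hence finite) multiplicity in a fundamental cell, so Theorem \ref{thm:generalized_martio_periodicity_result} rules out $0 < \dim \Gamma \leq n-2$. The paper sketches exactly this argument in the paragraph preceding the corollary, citing it as a direct adaptation of Mayer's proof, so your proposal matches the intended proof.
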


If $M$ is a closed, connected, and oriented Riemannian $n$-manifold, one may ask whether the induced uniformly quasiregular map $g \colon h(\R^n) \to h(\R^n)$ of a Latt\`es triple $(\Gamma, h, A)$ into $M$ always extends to a uniformly quasiregular $g \colon M \to M$. This was shown to be true for the most interesting case of $M = \S^n$ by Mayer \cite[Proposition 3.2]{Mayer1998paper}, although the proof appears to be written with the implicit assumption that $\dim \Gamma_T = \dim \Gamma$. We give a version of this extension result for all closed, connected, oriented Riemannian $n$-manifolds $M$. Note that we include the case $0 \leq \dim \Gamma \leq n-2$ for completeness, as we do not assume that $A$ is expanding.

\begin{thm}\label{prop:extension_result}
	Let $M$ be a closed, connected, and oriented Riemannian $n$-manifold, let $(\Gamma, h, A)$ be a Latt\`es triple into $M$ with induced uniformly quasiregular map $g \colon h(\R^n) \to h(\R^n)$, and let $k = \dim \Gamma$. Then we have the following three cases: 
	\begin{itemize}
		\item if $k = n$, then $h$ is surjective;
		\item if $k = n-1$, then $h$ omits either 1 or 2 points;
		\item if $0 \leq k \leq n-2$, then $h$ omits 1 and only 1 point.
	\end{itemize}
	Moreover, $g\colon h(\R^n) \to h(\R^n)$ always extends to a Latt\`es map $g \colon M \to M$ on the entire manifold $M$.
\end{thm}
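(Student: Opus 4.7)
The plan is to handle the two assertions in sequence. Since $h$ is quasiregular, it is open, so $h(\R^n)$ is open in $M$; the Picard-type theorem of Holopainen--Rickman gives that $M \setminus h(\R^n)$ is a finite set. The second assertion then follows quickly: $g$ is a $K$-quasiregular self-map of $h(\R^n)$, whose complement in $M$ is a finite (hence zero $n$-capacity) set, so the removable isolated singularity theorem for quasiregular mappings, applied in local charts around each omitted point, yields a $K$-quasiregular extension $g \colon M \to M$. Applying the same argument to each iterate $g^n$ with the same $K$ gives uniform quasiregularity, so the extension is a Latt\`es map.

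The trichotomy for the number of omitted points is a topological statement about the orbit space $\R^n/\Gamma$: since $h$ is strongly automorphic, it factors as an injective continuous $\bar h \colon \R^n/\Gamma \to M$, and the omitted points of $h$ correspond to the ``ends'' of $\R^n/\Gamma$. I would count these ends via a structural analysis of $\Gamma$ based on the Eriksson--Bique simultaneous diagonalization (cf.\ Section \ref{sect:sylvesters_improvement}) together with the facts on $\dim \Gamma$ from Section \ref{subsect:dimension_and_growth}: these provide a decomposition of $\R^n$ into a $k$-dimensional subspace $V$ on which a finite-index subgroup of $\Gamma$ acts cocompactly, and a transverse $(n-k)$-dimensional complement which accounts for any non-compactness of the quotient. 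The three subcases are then:
\begin{itemize}
\item If $k = n$, there is no transverse complement, so $\R^n/\Gamma$ is compact; $h(\R^n)$ is thus both open and closed in $M$, hence equal to $M$ by connectedness.
\item If $0 \leq k \leq n-2$, the transverse complement $\R^{n-k}$ has a connected sphere at infinity and hence a single end, which is preserved by the finite symmetry of the decomposition; thus $\R^n/\Gamma$ has one end, and $h$ omits exactly one point.
\item If $k = n-1$, the transverse complement $\R$ has two ends, preserved or interchanged by the finite symmetry group according to whether $\Gamma$ contains an element whose linear part reverses the transverse direction, yielding 1 or 2 omitted points.
\end{itemize}

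I expect the most delicate step to be the case $k = n$, where one must verify that $\dim \Gamma = n$ genuinely implies cocompactness of the $\Gamma$-action without the luxury of assuming $\dim \Gamma_T = \dim \Gamma$: a general discrete $\Gamma \leqslant E(n)$ need not satisfy that equality, so one really relies on the Eriksson--Bique diagonalization to conclude. A secondary technical point is the $k = n-1$ subcase, where the precise distinction between 1 and 2 omitted points requires identifying the presence or absence of the appropriate orientation-reversing element in $\Gamma$. Finally, one must verify that the correspondence between ends of $\R^n/\Gamma$ and omitted points of $h$ is indeed a bijection, i.e.\ that any sequence $(x_j) \subseteq \R^n$ whose images in $\R^n/\Gamma$ converge to a single end has $h(x_j)$ accumulating at a single point of $M \setminus h(\R^n)$; this follows from the local injectivity of $\bar h$ together with compactness of $M$, and is essentially the only subtlety in the extension step since the removable singularity argument itself is a routine adaptation of the $\S^n$ case to a general closed Riemannian $M$.
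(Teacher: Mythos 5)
Your proposal has a genuine gap in the extension step, and a valid (but different) approach to counting omitted points.

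\textbf{The extension gap.} You argue that since $M \setminus h(\R^n)$ is a finite, hence zero-$n$-capacity set, the removable isolated singularity theorem yields a quasiregular extension of $g$. This is not a theorem: a quasiregular map from a punctured ball into a compact manifold need \emph{not} extend over the puncture. Consider $z \mapsto e^{1/z}$ on $\C^*$ composed with the quotient projection $\C \to \C/\Lambda$ for a lattice $\Lambda$; this is holomorphic into a compact target and has an essential singularity at $0$ with cluster set all of $\C/\Lambda$. What actually rules out an essential singularity is \emph{finite multiplicity}: the paper first proves Lemma \ref{lem:multiplicity_lemma}, showing that $N(g) = [\Gamma : A\Gamma A^{-1}]$ via an algebraic argument on cosets, and only then applies the extension result \cite[Theorem 2.6]{OkuyamaPankka2013paper} (which requires $g^{-1}\{y\}$ to have no accumulation points). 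Without that step your argument collapses. Relatedly, zero capacity of the \emph{singular set} is the wrong side of the ledger; removability results trade on positive capacity of the \emph{omitted values} or on finite multiplicity, not on the singular set being small.

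\textbf{The end-counting.} Your topological approach to the trichotomy — identifying $M \setminus h(\R^n)$ with the ends of $h(\R^n) \cong \R^n/\Gamma$ via the homeomorphism $\bar h$ and then counting ends using a cocompact translation pair $(G,V)$ — is correct and genuinely different from the paper's route. The paper derives the upper bound on omitted points from Corollary \ref{cor:image_bound}, itself a consequence of the analytic Lemma \ref{lem:period_strip_limit} (a normal family argument showing $f$ has a limit along the twisted period strip). Your argument is more elementary and avoids normal families entirely: for $n \geq 2$, each isolated omitted point contributes exactly one end of $h(\R^n)$, and the number of ends of $\R^n/\Gamma$ is read off from whether $V^\perp$ has one end ($\dim V^\perp \geq 2$) or two ($\dim V^\perp = 1$), modulo the finite action on those ends. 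One caveat: the Eriksson-Bique simultaneous diagonalization is not needed here — the ordinary cocompact translation pair from Theorem \ref{thm:isometry_group_fact} suffices — so you are over-engineering that step. Similarly, your worry about the case $k = n$ is a false alarm: $\dim \Gamma = n$ by Definition \ref{def:dimension_of_group} means $\Gamma$ has a finite-index subgroup acting cocompactly by translations on all of $\R^n$, so $\R^n/\Gamma$ is compact; no diagonalization argument is needed. The paper simply invokes the bounded fundamental cell.
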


The restriction on the number of points omitted by $h$ follows the ideas of Martio and Srebro  \cite[Theorem 8.2]{MartioSrebro1975paper1}, and is an easy byproduct of a version of \cite[Lemma 3.1]{Martio1975paper_kperiod} which we need for the proof of Theorem \ref{thm:generalized_martio_periodicity_result}. While the extension could be done using Mayer's original methods, we provide a simple alternative proof based on general extension results of quasiregular maps.

Finally, we observe that for some closed manifolds $M$, the topology of $M$ imposes further restrictions on the dimension of Latt\`es triples. It is proven in \cite[Theorem 1.2]{Kangasniemi2017paper} that non-injective uniformly quasiregular maps on closed manifolds with nontrivial rational cohomology have large Julia sets. By applying this to Latt\`es maps, we obtain the following result.

\begin{thm}\label{prop:surjectivity_based_on_cohomology}
	Let $M$ be a closed, connected, and oriented Riemannian $n$-manifold, and let $(\Gamma, h, A)$ be a Latt\`es triple into $M$. Suppose that $A$ is expanding and $M$ is not a rational cohomology sphere. Then $\dim \Gamma = n$ and $h$ is surjective.
\end{thm}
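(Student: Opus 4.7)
The plan is to argue by contradiction. Corollary~\ref{cor:dimension_restriction_for_manifolds} already restricts $\dim\Gamma$ to $\{0, n-1, n\}$ under the expansion of $A$, so it suffices to rule out $\dim\Gamma = 0$ and $\dim\Gamma = n-1$. In both excluded cases, Theorem~\ref{prop:extension_result} supplies a nonempty finite omitted set $E = M \setminus h(\R^n)$ together with an extension $g \colon M \to M$ of the induced Latt\`es map, and I would treat the two cases by separate topological and dynamical arguments.

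Suppose first that $\dim \Gamma = 0$, so $\Gamma$ is finite. Then $\Gamma$ has a common fixed point in $\R^n$ about which it acts linearly, so the radial contraction of $\R^n$ descends to show that $\R^n/\Gamma$ is contractible. By strong automorphicity, $h$ factors as a continuous bijection $\bar h \colon \R^n/\Gamma \to M \setminus E$, and since non-constant quasiregular maps are open, $\bar h$ is in fact a homeomorphism. Since $|E| = 1$ by Theorem~\ref{prop:extension_result}, this shows that $M \setminus \{p\}$ is contractible for some $p \in M$. The long exact sequence of the pair $(M, M\setminus\{p\})$ in rational cohomology, combined with the excision identification $H^k(M, M\setminus\{p\}; \Q) \cong \tilde H^{k-1}(\S^{n-1}; \Q)$, then forces $H^*(M;\Q) \cong H^*(\S^n;\Q)$, contradicting the hypothesis that $M$ is not a rational cohomology sphere.

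Suppose next that $\dim \Gamma = n-1$. I would first verify that $g$ is non-injective. By Theorem~\ref{prop:translations_under_A-condition} the translation subgroup $\Gamma_T$ has rank $n-1$, and writing the expanding linear conformal $A$ as $A = \lambda U$ with $\lambda > 1$ and $U$ orthogonal, the inclusion $A\Gamma A^{-1} \subseteq \Gamma$ forces the subspace $V \subseteq \R^n$ spanned by the translation vectors of $\Gamma_T$ to be $A$-invariant. Then $A\Gamma_T A^{-1}$ is a proper sublattice of $\Gamma_T$ of index $\lambda^{n-1}$, and a coset count via $\Gamma_T \leqslant \Gamma$ gives $[\Gamma : A\Gamma A^{-1}] = \lambda^{n-1}$; this index equals the cardinality of a generic $g$-fiber, so $\deg g = \lambda^{n-1} > 1$ and $g$ is non-injective. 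Second, the intertwining $g \circ h = h \circ A$ together with the expansion of $A$ implies that the points of $E$ are topologically attracting fixed points of $g$ and hence lie in $\Fatou(g)$, which gives $\Julia(g) \subseteq h(\R^n) \subsetneq M$. But \cite[Theorem~1.2]{Kangasniemi2017paper} applied to the non-injective uniformly quasiregular map $g$ on $M$ asserts $\Julia(g) = M$, a contradiction. Hence $\dim \Gamma = n$, and Theorem~\ref{prop:extension_result} then yields that $h$ is surjective.

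The main obstacle I foresee is the Fatou-set claim for the omitted points in the second case. The geometric picture that the expanding $A$ forces a point at infinity to become attracting is standard in the classical $M = \S^n$ setting, but for a general closed target $M$ one must carefully translate the behavior of $A$ on large-norm shells of $\R^n$ into equicontinuity of $\{g^j\}$ on a neighborhood of each $p \in E$ inside $M$, using the local description of the extension of $g$ provided by Theorem~\ref{prop:extension_result}.
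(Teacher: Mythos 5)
Your case $\dim\Gamma = 0$ is correct and is a genuinely different argument from the paper's: the paper handles all $\dim\Gamma < n$ uniformly via \cite[Theorem~1.2]{Kangasniemi2017paper}, whereas you derive $H^*(M;\Q) \cong H^*(\S^n;\Q)$ directly from the contractibility of $\R^n/\Gamma$ and the long exact sequence of the pair. This is in fact advantageous in this case, since when $\Gamma$ is finite the multiplicity $[\Gamma : A\Gamma A^{-1}]$ of $g$ is $1$, so $g$ is injective and the dynamical theorem does not apply.

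However, your case $\dim\Gamma = n-1$ has a genuine gap, and it is larger than the obstacle you flag at the end. The theorem you invoke does \emph{not} assert $\Julia(g) = M$; as the paper recalls it, \cite[Theorem~1.2]{Kangasniemi2017paper} only asserts that $\Julia(g)$ has \emph{positive Lebesgue measure}. With that statement, your conclusion $\Julia(g) \subseteq h(\R^n) \subsetneq M$ yields no contradiction whatsoever: the omitted set $E$ is finite by Theorem~\ref{prop:extension_result}, so $h(\R^n)$ has full measure. Showing that the finitely many points of $E$ are attracting fixed points in $\Fatou(g)$ is therefore far from sufficient. What one actually needs — and what the paper proves — is that $h(x) \in \Fatou_g$ for \emph{every} $x$ outside the $A$-invariant affine $(n-1)$-plane $V$ provided by Lemma~\ref{lem:conjugation_map_linearization}. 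This is done by an expansion/normal-family argument: since $AV=V$ and $A$ is conformal and expanding, a small ball $B^n(x,r)$ with $d(x,V)>r$ has $d(A^m B^n(x,r), V) \to \infty$, so for $m$ large the iterates $g^m$ restricted to $h(B^n(x,r))$ omit a fixed open set $h(B^n(\Gamma V, 1))$, and normality follows. One then concludes $\Julia_g \subseteq h(V) \cup E$, and $h(V)$ has measure zero because $V$ does and $h$ is quasiregular (\cite[I.4.14]{Rickman1993book}); that is the set of measure zero that contradicts positive measure. Your index computation for non-injectivity is also unnecessary here, since Lemma~\ref{lem:extension_and_non-injectivity} already gives non-injectivity whenever $\dim\Gamma > 0$ and $A$ is expanding. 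Finally, note that your overall structure imports Corollary~\ref{cor:dimension_restriction_for_manifolds}, and hence the full path-lifting machinery behind Theorem~\ref{thm:generalized_martio_periodicity_result}, while the paper's proof of this particular theorem avoids that dependency by arguing directly for all $\dim\Gamma < n$ at once.
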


Consequently, under the assumptions of Theorem \ref{prop:surjectivity_based_on_cohomology}, the quotient map $\R^n / \Gamma \to h(\R^n) = M$ induced by $h$ is a homeomorphism. Therefore, $M$ is topologically a manifold which is homeomorphic to a flat orbifold. Note that this does not mean that $M$ is a flat manifold: there exists a flat orbifold structure on the space $S^2 \times S^2$, and moreover $S^2 \times S^2$ admits non-injective Latt\`es maps by \cite[Section 4.2]{Astola-Kangaslampi-Peltonen}, but $S^2 \times S^2$ does not admit a flat smooth Riemannian metric. See also \cite{Martin-Mayer-Peltonen} for related discussion.

\subsection{Structure of this paper}

Section \ref{sect:eucl_isom} is a review of basic facts and properties related to the group $E(n)$ of Euclidean isometries and its discrete subgroups. Afterwards, we first present the proofs for the Latt\`es-specific facts. In Section \ref{sect:lin_conf_interaction}, we study the condition $A \Gamma A^{-1} \subset \Gamma$, and obtain Theorem \ref{prop:translations_under_A-condition}. In Section \ref{sect:extension}, we show the extension part of Theorem \ref{prop:extension_result}. In Section \ref{sect:extra_proposition}, we prove Theorem \ref{prop:surjectivity_based_on_cohomology}. 

After Section \ref{sect:extra_proposition}, the rest of the paper focuses on the proof of Theorem \ref{thm:generalized_martio_periodicity_result}. In Section \ref{sect:limit_lemma}, we formulate a version of \cite[Lemma 3.1]{Martio1975paper_kperiod}, obtaining the remining part of Theorem \ref{prop:extension_result} as a consequence. Section \ref{sect:path_lifting_short} is on the automorphic variant of the path lifting lemma \cite[Lemma 4.2]{Martio1975paper_kperiod}. In Section \ref{sect:length_estimate}, we derive the length estimate used on the lifted paths, which is the main difference to the periodic case in the proof of Theorem \ref{thm:generalized_martio_periodicity_result}. Finally, in Sections \ref{sect:main_theorem} and \ref{sect:sylvesters_improvement}, we prove Theorems \ref{thm:generalized_martio_periodicity_result} and \ref{thm:Sylvesters_version}.

\subsection{Acknowledgments}

The author thanks his PhD thesis supervisor Pek\-ka Pankka for helpful critical comments on the paper, as well as Volker Mayer and Alastair Fletcher for e-mail discussions related to the subject. Moreover, the author thanks Sylvester Eriksson-Bique for pointing out the argument leading to Theorem \ref{thm:Sylvesters_version}, as well as for other helpful critical comments on the paper.

\section{Euclidean isometries}\label{sect:eucl_isom}

Let $E(n)$ denote the $n$-dimensional \emph{Euclidean group}, that is, the isometry group of $\R^n$. The purpose of this section is to review the necessary basics of $E(n)$ and its discrete subgroups. We assume that the results in this section are well-known, but regardless provide the proofs for convenience. For more discussion, see eg. Szczepanski \cite{Szczepanski2012book} or Wolf \cite[Chapter 3]{Wolf1967book}.

Recall that any isometry of $\R^n$ can be written in the form $x \mapsto Ax + a$, where $a \in \R^n$ and $A$ is an element of the $n$-dimensional \emph{orthogonal group} $O(n)$. Hence, $E(n)$ may be considered as a semidirect product $O(n) \rtimes \R^n$ with the composition rule $(A,a) \circ (B,b) = (AB, Ab + a)$ and inverse elements $(A, a)^{-1} = (A^{-1}, -A^{-1}a)$. We denote 
\[
	\tran (A, a) = a, \quad \ort (A, a) = A.
\]

The group $O(n)$ admits an invariant metric, where the distance of two elements $A, B \in O(n)$ is the operator norm of the linear map $A - B$. With respect to this metric, $O(n)$ is a compact topological group. We may then topologize $E(n)$ using the product topology.

Let $\Gamma$ be a discrete subgroup of $E(n)$. We note that $\Gamma$ is closed since $E(n)$ is a Hausdorff topological group. Moreover the subset 
\[
	\tran(\Gamma) = \{ \tran(\gamma) : \gamma \in \Gamma \} 
\]
of $\R^n$ is closed and discrete, since the map $\tran \colon E(n) \to \R^n$ is proper by the compactness of $O(n)$, and therefore $\tran$ maps closed discrete sets to closed discrete sets.

We remark here an important basic property of $O(n)$ as a compact group, which we formulate as a lemma since it is used multiple times in this paper. The proof is simple, but we recall it nonetheless for convenience. 

\begin{lemma}\label{lem:basic_prop_of_O(n)}
	Suppose $A \in O(n)$. Then there is a subsequence of $(A^m)_{m=1}^\infty$ which converges to $\id_{\R^n}$.
\end{lemma}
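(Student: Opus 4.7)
The plan is to exploit the compactness of $O(n)$ in the operator norm topology, together with the isometric invariance of this norm under left-multiplication by orthogonal matrices. First, since the sequence $(A^m)_{m=1}^\infty$ lies in the compact set $O(n)$, I would extract a convergent subsequence $A^{m_k} \to B$ for some $B \in O(n)$; in particular, this subsequence is Cauchy.

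Next, I would observe that for $k < \ell$, left-multiplication by $A^{-m_k} \in O(n)$ preserves the operator norm, so that
\[
	\norm{A^{m_\ell - m_k} - \id_{\R^n}} = \norm{A^{m_\ell} - A^{m_k}}.
\]
The right-hand side tends to $0$ as $k,\ell \to \infty$. Consequently, the positive exponents $m_\ell - m_k$, which can be made arbitrarily large, witness that for every $\eps > 0$ there are arbitrarily large positive integers $n$ with $\norm{A^n - \id_{\R^n}} < \eps$.

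Finally, I would assemble the desired subsequence by induction: having chosen positive integers $n_1 < \dots < n_j$ with $\norm{A^{n_i} - \id_{\R^n}} < 1/i$, the previous step produces some $n_{j+1} > n_j$ with $\norm{A^{n_{j+1}} - \id_{\R^n}} < 1/(j+1)$. This yields a strictly increasing sequence of exponents along which the powers of $A$ converge to $\id_{\R^n}$.

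There is no substantial obstacle here; the argument is the standard Cauchy-difference trick in a compact group. The only mild subtlety is ensuring the exponents used can be taken to be both strictly positive and strictly increasing, which the inductive selection in the final step handles automatically.
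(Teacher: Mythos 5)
Your proof is correct and uses essentially the same argument as the paper: compactness of $O(n)$ to extract a convergent (hence Cauchy) subsequence, then the identity $\norm{A^{m_\ell - m_k} - \id_{\R^n}} = \norm{A^{m_\ell} - A^{m_k}}$ coming from norm-invariance under orthogonal multiplication. The paper handles the final bookkeeping by passing to a subsequence with strictly increasing gaps $m_{j+1}-m_j$, whereas you do a separate inductive extraction; both are minor variants of the same step.
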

\begin{proof}
	Since $O(n)$ is compact, there is a subsequence $(A^{m_j})_{j=1}^\infty$ converging to an element $A' \in O(n)$. By moving to a further subsequence, we may assume that $m_{j+1} - m_j > m_j - m_{j-1}$ for every $j \geq 2$. It follows that $(A^{m_{j+1} - m_j})_{j=1}^\infty$ is a subsequence of $(A^m)_{m=1}^\infty$.
	
	We claim that $(A^{m_{j+1} - m_j})_{j=1}^\infty$ converges to $\id_{\R^n}$. Indeed, since $(A^{m_j})_{j=1}^\infty$ is convergent, it is Cauchy, and therefore for large enough $j$ we have 
	\[
		\norm{A^{m_{j+1} - m_j} - \id_{\R^n}} = \norm{A^{m_{j+1}} - A^{m_j}} < \eps.
	\]
	Hence, the claim follows.
\end{proof}

\subsection{Subgroup of translations}\label{subsect:translation_subgroup}

For a discrete subgroup $\Gamma$ of $E(n)$, we denote by $\Gamma_T$ the subgroup of translations of $\Gamma$. A major classical result of discrete Euclidean groups is Bieberbach's first theorem: if $\Gamma \leqslant E(n)$ is discrete and $\R^n / \Gamma$ is compact, then $\Gamma_T$ is of finite index in $\Gamma$. Moreover, in this case $\Gamma_T$ is $n$-dimensional, in a sense which is made precise in Section \ref{subsect:dimension_and_growth}.

However, if $\R^n / \Gamma$ is not compact, then $\Gamma_T$ may fail to be of finite index in $\Gamma$. A simple example of this is a discrete group $\Gamma$ of screw-motions in $\R^3$, consisting of the maps $\gamma_k \colon (r, \theta, z) \mapsto (r, \theta + k\theta_0, z + k)$ in cylindrical coordinates where $k \in \Z$ and $\theta_0/\pi$ is irrational. Note, however, that while $\Gamma$ has no translations, the elements of $\Gamma$ restrict to the $z$-axis as translations. We formulate this property as follows.

\begin{defn}\label{def:subspace_of_translations}
	Suppose that $\Gamma$ is a discrete subgroup of $E(n)$, $G$ is a subgroup of $\Gamma$, and $V$ is an affine subspace of $\R^n$. We call the pair $(G, V)$ a \emph{cocompact translation pair of $\Gamma$} if $G$ acts cocompactly on $V$ by translations; that is, $GV = V$, the restriction $g \vert_V$ is a translation for every $g \in G$, and the quotient space $G / V$ is compact. 
	
	Suppose that $(G, V)$ is a cocompact translation pair of $\Gamma$. If $G$ is of finite index in $\Gamma$, we say that $(G, V)$ is a \emph{finite index cocompact translation pair of $\Gamma$}. Furthermore, if $g \in G$, we denote by $\tran_V(g)$ the translation vector of $g\vert_V$.
\end{defn}

Now, an extension of Bieberbach's first theorem to the case where $\R^n / \Gamma$ is not compact is given by the following theorem of Wolf.

\begin{thm}[{\cite[Theorem 3.2.8]{Wolf1967book}}]\label{thm:isometry_group_fact}
	Let $\Gamma$ be a discrete subgroup of $E(n)$. Then there exists a finite index cocompact translation pair $(G, V)$ of $\Gamma$ for which $G$ is a normal Abelian subgroup of $\Gamma$ containing $\Gamma_T$.
\end{thm}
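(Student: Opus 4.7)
The plan is to adapt Bieberbach's classical argument for cocompact discrete groups to the general discrete setting. First, I pass to a finite-index normal subgroup by exploiting the compact Lie group structure of the closure of rotation parts. Let $H = \overline{\ort(\Gamma)} \subset O(n)$, which is a compact Lie group, and let $H_0$ be its identity component --- a closed normal subgroup of finite index. Set $G = \ort^{-1}(H_0) \cap \Gamma$. Then $G$ is normal in $\Gamma$ (since $H_0$ is characteristic in $H$), has index at most $[H:H_0] < \infty$, and contains $\Gamma_T$ because pure translations have orthogonal part $I \in H_0$.

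The main step, and the one I expect to be the hardest, is showing that $G$ is abelian. I would proceed by a Zassenhaus/commutator-shrinking argument. For $\gamma_1, \gamma_2 \in G$ with $\ort(\gamma_i) = A_i \in H_0$, a direct computation gives the commutator $[\gamma_1, \gamma_2]$ with orthogonal part $[A_1, A_2]$ and an explicit translation part in terms of $A_i$ and $\tran(\gamma_i)$. Using Lemma \ref{lem:basic_prop_of_O(n)}, suitable subsequential powers of each $A_i$ cluster at $I$, so the orthogonal parts of iterated commutators $\gamma^{(k+1)} = [\gamma^{(k)}, \gamma_1]$ can be driven arbitrarily close to $I$. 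The delicate point is controlling the translation parts of these iterated commutators simultaneously; once these also lie in a sufficiently small neighborhood of $0$, discreteness of $\Gamma$ in $E(n)$ forces $\gamma^{(k)} = e$ for some $k$, and standard commutator identities propagate this back to $[\gamma_1, \gamma_2] = e$.

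Once $G$ is known to be abelian, the rest is constructive. Let $V_0 := \bigcap_{\gamma \in G} \ker(\ort(\gamma) - I)$. This is an $\ort(G)$-invariant linear subspace on which every element of $\ort(G)$ acts as the identity; its orthogonal complement $V_0^{\perp}$ is also $\ort(G)$-invariant by commutativity. For each $\gamma = (A, a) \in G$, split $a = a' + a''$ with $a' \in V_0$ and $a'' \in V_0^{\perp}$. I seek $v_0 \in V_0^{\perp}$ such that $\gamma(v_0 + V_0) = v_0 + V_0$ for every $\gamma \in G$; this reduces to the affine system $(I - A\vert_{V_0^{\perp}}) v_0 = a''$ on $V_0^{\perp}$. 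Consistency follows from the commutativity of $G$ together with the maximality of $V_0$, and the affine subspace $V := v_0 + V_0$ is then $G$-invariant, with each $\gamma \in G$ acting on $V$ as the translation $v \mapsto v + a'$.

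For cocompactness of $V/G$, the translation vectors $\tran_V(G) \subset V_0$ form a subgroup; discreteness of $\Gamma$ in $E(n)$ combined with compactness of $H_0$ makes this subgroup discrete in $V_0$. If it fails to span $V_0$ as a real vector space, I replace $V_0$ by the $\R$-span $V_0'$ of $\tran_V(G)$ and $V$ by $v_0 + V_0'$. Both $G$-invariance and the translation action of $G$ are preserved, and now $\tran_V(G)$ is a full-rank discrete subgroup of $V_0'$, hence a lattice, yielding cocompactness and completing the construction of the required finite-index cocompact translation pair.
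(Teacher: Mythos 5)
The paper offers no proof of this statement: it is quoted directly as Theorem~3.2.8 of Wolf's book, so there is no in-paper argument to compare against. Evaluated on its own terms, your outline follows the expected general strategy (a Bieberbach/Zassenhaus-type argument followed by construction of the invariant affine subspace), but the pivotal step --- abelianness of $G$ --- has a genuine gap. The commutator-contraction estimate $\lVert[A_1,A_2]-I\rVert\lesssim\lVert A_1-I\rVert\,\lVert A_2-I\rVert$ only contracts when both orthogonal parts already lie in a small neighborhood of the identity; membership in $H_0$ alone gives no such bound. Your proposed fix --- using Lemma~\ref{lem:basic_prop_of_O(n)} to find powers $A_i^m$ close to $I$ --- runs directly into the problem you flag as delicate: the translation part $\tran(\gamma_1^m)=(I+A_1+\cdots+A_1^{m-1})a_1$ can grow linearly in $m$ (for a screw motion, where $A_1a_1=a_1$, it is exactly $ma_1$), so the iterated commutators $\gamma^{(k)}$ need not stay bounded and discreteness cannot be invoked. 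Moreover, even if the iteration terminated, it would yield commutativity statements for $\gamma_1^m$, not for $\gamma_1$, and the ``back-propagation'' to $[\gamma_1,\gamma_2]=e$ is precisely the content of Lemma~\ref{lem:wolf_conjugation_lemma}, which requires $\ort(\gamma_1)$ itself to lie in $U_n$. The classical proof avoids all of this by defining the candidate subgroup as the one generated by elements whose orthogonal parts already lie in a conjugation-invariant neighborhood $U_n$ of $I$, proving pairwise commutativity there, and then separately establishing finite index via compactness of $O(n)$. Your $G=\ort^{-1}(H_0)\cap\Gamma$ is a priori larger, and the commutator iteration you describe does not by itself establish that it is abelian.

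There is a secondary unaddressed point in the construction of $V$. For a single $\gamma=(A,a)\in G$, the operator $I-A\vert_{V_0^\perp}$ need not be invertible: $V_0$ is the common fixed space, so an individual $A$ may well have additional fixed vectors in $V_0^\perp$ (again the screw motion is the model case, with $a''$ along that fixed direction), in which case $(I-A\vert_{V_0^\perp})v_0=a''$ has no solution at all. The assertion that solvability and consistency ``follow from commutativity of $G$ together with maximality of $V_0$'' is stated rather than proved, and the case just described shows it needs an actual argument --- one typically works instead with the affine subspaces of minimal displacement and uses commutativity to intersect them. The remaining pieces --- normality and finite index of $G$, the inclusion $\Gamma_T\subset G$, the passage to the span $V_0'$ of $\tran_V(G)$, and discreteness of $\tran_V(G)$ --- are sound.
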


Before continuing, we point out some basic properties of cocompact translation pairs . The proofs are simple, but are still included for convenience. 

\begin{lemma}\label{lem:distance_preserving_fact}
	Let $\Gamma$ be a discrete subgroup of $E(n)$, and let $(G,V)$ be a cocompact translation pair of $\Gamma$. Then for every $x \in \R^n$ and $g \in G$, we have $d(x, V) = d(g(x), V)$.
\end{lemma}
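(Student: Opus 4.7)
The plan is to observe that the statement follows immediately from two facts: $g$ is an isometry of $\R^n$, and $g$ maps $V$ bijectively onto itself. The first is automatic since $g \in G \leqslant \Gamma \leqslant E(n)$. For the second, the definition of a cocompact translation pair gives $GV = V$, and since $G$ is a group this forces $g(V) \subseteq V$ and $g^{-1}(V) \subseteq V$ for every $g \in G$; combining these yields $g(V) = V$. Alternatively, one may use directly that $g|_V$ is a translation on the affine subspace $V$, hence a bijection $V \to V$.

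With those two facts in hand, the argument is a one-line infimum manipulation. Writing $d(x,V) = \inf_{y \in V} d(x, y)$ and applying the isometry property of $g$,
\[
	d(x,V) = \inf_{y \in V} d(x,y) = \inf_{y \in V} d(g(x), g(y)) = \inf_{y' \in V} d(g(x), y') = d(g(x), V),
\]
where the penultimate equality uses the substitution $y' = g(y)$ together with the bijectivity of $g|_V \colon V \to V$.

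There is no real obstacle here; the lemma is essentially a sanity check that ensures $V$ is an invariant subset of $\R^n$ under the action of $G$ in a metric sense. The only tiny subtlety is justifying $g(V) = V$ rather than merely $g(V) \subseteq V$, which is why I would spell out the group-theoretic argument involving $g^{-1}$ at the start. After that, the computation of distances is immediate from $g$ being a Euclidean isometry.
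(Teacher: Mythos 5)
Your proof is correct and takes essentially the same approach as the paper's: both rely on $g$ being an isometry and $V$ being $G$-invariant, with $g^{-1}$ used to get the invariance (or the reverse inequality) both ways. Your packaging as a single infimum substitution after first establishing $g(V)=V$ is slightly cleaner than the paper's two-inequality argument, but the content is identical.
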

\begin{proof}
	Let $g \in G$ and $x \in \R^n$. Since $V \subset \R^n$ is closed, there exists a $v \in V$ for which $d(x, V) = d(x, v)$. Since $GV = V$, we have $g(v) \in V$, and therefore
	\begin{equation}\label{eq:distance_estimate}
		d(x, V) = d(x, v) = d(g(x), g(v)) \leq d(g(x), V).
	\end{equation}
	For the opposite estimate, we apply \eqref{eq:distance_estimate} on $g^{-1} \in G$ and $g(x) \in \R^n$, obtaining
	\[
		d(g(x), V) \leq d(g^{-1}(g(x)), V) = d(x, V).
	\]
\end{proof}

\begin{lemma}\label{lem:identity_restriction}
	Let $\Gamma$ be a discrete subgroup of $E(n)$, let $(G,V)$ be a cocompact translation pair of $\Gamma$, and let $V'$ be the linear space parallel to $V$. Then for every $g \in G$, we have $\ort(g) \vert_{V'} = \id_{V'}$.
\end{lemma}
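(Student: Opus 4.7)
The plan is to unpack the definition directly: writing $g = (A, a)$ with $A = \ort(g)$, the assumption that $g\vert_V$ is a translation means that $g$ preserves differences of points in $V$, and these differences are exactly the vectors of $V'$.

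More explicitly, I would fix $g \in G$ and pick any two points $v_1, v_2 \in V$. Because $g\vert_V$ is a translation of $V$, we have
\[
    g(v_1) - g(v_2) = v_1 - v_2.
\]
On the other hand, writing $g(x) = Ax + a$ with $A = \ort(g) \in O(n)$, we compute
\[
    g(v_1) - g(v_2) = (Av_1 + a) - (Av_2 + a) = A(v_1 - v_2).
\]
Combining these two identities yields $A(v_1 - v_2) = v_1 - v_2$ for every pair $v_1, v_2 \in V$.

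To finish, I would observe that $V' = \{v_1 - v_2 : v_1, v_2 \in V\}$, which is immediate from the fact that $V'$ is by definition the linear subspace parallel to the affine subspace $V$ (fix any basepoint $v_0 \in V$ and note $V' = V - v_0$). Hence $A$ fixes every vector in $V'$, i.e.\ $\ort(g)\vert_{V'} = \id_{V'}$, completing the proof. There is essentially no obstacle here; the lemma is a direct unwinding of the definition of "acts by translations" on an affine subspace, and I expect the write-up to be only a few lines.
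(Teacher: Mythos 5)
Your proof is correct and is essentially identical to the paper's: both unwind the fact that $g$ acts by translation on $V$ while $g$ is affine with linear part $\ort(g)$, the only cosmetic difference being that you work with a pair $v_1, v_2 \in V$ whereas the paper picks a basepoint $x \in V$ and a vector $v' \in V'$ with $x + v' \in V$.
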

\begin{proof}
	Let $g \in G$ and $v' \in V'$. Select a point $x \in V$. Since $x + v' \in V$ and $g$ is a translation on $V$, we have $v' = (x+v') - x = g(x + v') - g(x)$. On the other hand, since $g$ is affine with linear part $\ort(g)$, we have $g(x + v') - g(x) = \ort(g)(x + v' - x) = \ort(g)v'$. We conclude that $\ort(g)v' = v'$, which yields the claim.
\end{proof}

\subsection{Dimension and growth}\label{subsect:dimension_and_growth}

We present two equivalent definitions for the dimension of a discrete $\Gamma \leqslant E(n)$. The first is by a direct application of Theorem \ref{thm:isometry_group_fact}.

\begin{defn}\label{def:dimension_of_group}
	Let $\Gamma \leqslant E(n)$ be discrete. The \emph{dimension} $\dim \Gamma$ of $\Gamma$ is given by $\dim \Gamma = \dim V$, where $(G, V)$ is a finite index cocompact translation pair of $\Gamma$.
\end{defn}

In order for this to be a valid definition, it needs to be independent of the choice of $(G, V)$. This is true due to the following simple lemma. 

\begin{lemma}\label{lem:dimension_uniqueness}
	Let $\Gamma$ be a discrete subgroup of $E(n)$, and suppose that $(G, V)$ and $(G', V')$ are finite index cocompact translation pairs of $\Gamma$. Then $V' = V + a$ for some $a \in \R^n$.
\end{lemma}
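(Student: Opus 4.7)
My plan is to intersect the two subgroups $G$ and $G'$ and show that their common elements translate $V$ and $V'$ by the same vector, which forces the linear directions of $V$ and $V'$ to coincide and makes them parallel.

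First I would set $H = G \cap G'$. Since $G$ and $G'$ both have finite index in $\Gamma$, so does $H$; in particular $H$ has finite index in both $G$ and $G'$. A short check -- using that the translation image $\tran_V(H)$ has index at most $[G : H] < \infty$ in the full-rank lattice $\tran_V(G) \subset \R^n$, so it is itself a full-rank lattice -- shows that $(H, V)$ is again a cocompact translation pair, and symmetrically $(H, V')$. Let $W$ and $W'$ denote the linear subspaces of $\R^n$ parallel to $V$ and $V'$, respectively.

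The main computation is to compare the translation vectors of a common element $h \in H$. Fix $v_0 \in V$ and $v'_0 \in V'$, so that $\tran_V(h) = h(v_0) - v_0 \in W$ and $\tran_{V'}(h) = h(v'_0) - v'_0 \in W'$. Writing $h(x) = \ort(h)\,x + \tran(h)$ in the affine form immediately yields
\[
  \tran_V(h) - \tran_{V'}(h) = (\ort(h) - I)(v_0 - v'_0).
\]
Applying Lemma \ref{lem:identity_restriction} to $(G,V)$ and to $(G',V')$, the orthogonal part $\ort(h)$ fixes both $W$ and $W'$ pointwise, hence fixes $U := W + W'$ pointwise; being orthogonal, it then preserves the splitting $\R^n = U \oplus U^\perp$. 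Decomposing $v_0 - v'_0 = u + w$ with $u \in U$ and $w \in U^\perp$ kills the $u$-part, so the right-hand side equals $(\ort(h) - I)w \in U^\perp$, while the left-hand side plainly lies in $W + W' = U$. Hence both sides are in $U \cap U^\perp = \{0\}$, and $\tran_V(h) = \tran_{V'}(h)$ for every $h \in H$.

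To finish, the cocompactness of the translation action of $H$ on $V$ makes $\tran_V(H)$ a full-rank lattice in $W$, and symmetrically $\tran_{V'}(H)$ is a full-rank lattice in $W'$. Since these two lattices coincide as subsets of $\R^n$, their real spans do as well, giving $W = W'$. Thus $V$ and $V'$ are parallel affine subspaces and $V' = V + (v'_0 - v_0)$. The only non-routine ingredient is spotting the correct orthogonal decomposition; once Lemma \ref{lem:identity_restriction} is invoked the rest is a direct calculation, so I do not foresee any serious obstacle.
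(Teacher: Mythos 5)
Your proposal is correct and reaches the same conclusion, but the key step --- showing $\tran_V(h) = \tran_{V'}(h)$ for $h \in H := G \cap G'$ --- is proved by a genuinely different argument. The paper fixes $x \in V$, $y \in V'$, uses the isometry identities $\abs{x-y} = \abs{(x-y)+(v-v')}$ (for $h$) and $\abs{x-y} = \abs{(x-y)-(v-v')}$ (for $h^{-1}$), and then invokes the parallelogram law to force $v = v'$. You instead write out the affine form $h(x) = \ort(h)x + \tran(h)$ to get $\tran_V(h) - \tran_{V'}(h) = (\ort(h)-I)(v_0 - v'_0)$, and then appeal to Lemma~\ref{lem:identity_restriction} (that $\ort(h)$ fixes $W$ and $W'$ pointwise) plus orthogonality to see that the left side lies in $U = W + W'$ while the right side lies in $U^\perp$, forcing both to vanish. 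Your route is a bit more structural: it isolates exactly why the obstruction term $(\ort(h)-I)(v_0 - v'_0)$ cannot contribute, and leans on a lemma the paper has already proved, whereas the paper's parallelogram-law trick is more elementary and self-contained (it never mentions $\ort(h)$ at all). Both are valid; neither is meaningfully shorter. Two small notes: $\tran_V(G)$ is a full-rank lattice in $W$ (the linear direction of $V$), not in $\R^n$; and your cocompactness argument for $(H, V)$ via lattice indices is a legitimate alternative to the paper's proper-map argument for the same point.
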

\begin{proof}
	Suppose first that $G = G'$. Let $g \in G$, select $x \in V$ and $y \in V'$, and denote $\tran_V(g) = v$ and $\tran_{V'}(g) = v'$. Since $g$ is an isometry, we have $\abs{x-y} = \abs{(x-y)+(v-v')}$. By similarly considering $g^{-1}$, we get $\abs{x-y} = \abs{(x-y)-(v-v')}$. Applying these identities and the parallelogram rule $\abs{a+b}^2 + \abs{a-b}^2 = 2\abs{a}^2 + 2\abs{b}^2$, we obtain
	\begin{align*}
		2\abs{x-y}^2
		&= \abs{(x-y)+(v-v')}^2 + \abs{(x-y)-(v-v')}^2\\
		&= 2\abs{x-y}^2 + 2\abs{v-v'}^2.
	\end{align*}
	Hence, $\abs{v-v'} = 0$, or in other words, $v = v'$. Since $\{\tran_{V}(g) : g \in G\}$ span a space parallel to $V$ and the same holds for $\{\tran_{V'}(g) : g \in G\}$ and $V'$, it follows that $V$ and $V'$ are parallel.
	
	Consider now the general case. Then the group $G \cap G'$ is of finite index in both $G$ and $G'$, and therefore also in $\Gamma$. If $g \in G \cap G'$, it follows that $g \vert_V$ is a translation of $V$ since $g \in G$. Hence, $(G \cap G') V \subset V$, and since $\id_{\R^n} \in G \cap G'$, we conclude that $(G \cap G') V = V$. Moreover, the space $V / (G \cap G')$ is compact; indeed, the space $V / G$ is compact, and the quotient map $p \colon V/(G \cap G') \to V / G$ is proper due to being a continuous map with finite fibres between locally compact Hausdorff spaces. 
	
	In conclusion, $(G \cap G', V)$ is a finite index cocompact translation pair of $\Gamma$. We may similarly deduce that $(G \cap G', V')$ is a finite index cocompact translation pair of $\Gamma$. Hence, by the first case, $V$ and $V'$ are parallel.
\end{proof}

The second equivalent definition is by a notion of growth for $\Gamma$. This is similar in spirit to the standard notion of growth for finitely generated discrete groups under the word metric, except we formulate the concept using Euclidean balls.

We recall for functions $f, g : [0, \infty) \to [0, \infty)$ the big $O$ and big $\Theta$ notations: we denote $f(t) = O(g(t))$ if $\limsup_{t \to \infty} f(t)/g(t) < \infty$, and $f(t) = \Theta(g(t))$ if $f(t) = O(g(t))$ and $g(t) = O(f(t))$. If $\Gamma \leqslant E(n)$ is discrete, we let $N_\Gamma(r)$ denote the number of elements $\gamma \in \Gamma$ for which $\abs{T(\gamma)} \leq r$. Furthermore, if $(G, V)$ is a cocompact translation pair of $\Gamma$, we let $N^V_G(r)$ be the number of elements $\gamma \in G$ for which $\abs{\tran_V(\gamma)} \leq r$.

The following lemma shows how the growth of $N_\Gamma(r)$ is connected to the dimension of $\Gamma$. 

\begin{lemma}\label{lem:dimension_growth_interpretation}
	Let $\Gamma$ be a discrete subgroup of $E(n)$, and let $(G, V)$ be a finite index cocompact translation pair of $\Gamma$. Then
	\[
		N_\Gamma(r) = \Theta(N_G(r)) = \Theta(N_G^V(r)) = \Theta\big(r^{\dim V}\big)
		= \Theta\big(r^{\dim \Gamma}\big).
	\]
	In particular, $\dim \Gamma = k$ if and only if $N_\Gamma(r) = \Theta(r^k)$.
\end{lemma}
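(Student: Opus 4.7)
The plan is to traverse the chain of $\Theta$-equivalences from right to left: first show that the rightmost quantity $N_G^V(r)$ has polynomial growth of degree $\dim V$, then push this across the chain via comparison estimates differing only by constant additive shifts in $r$, and finally read off the ``in particular'' claim from uniqueness of the polynomial exponent.

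The core of the argument is $N_G^V(r) = \Theta(r^{\dim V})$. I would observe that $\tran_V \colon G \to V'$, where $V'$ is the linear space parallel to $V$, is a group homomorphism into the additive group of $V'$, well-defined because each $g|_V$ is a translation. Its kernel consists of elements of $\Gamma$ fixing $V$ pointwise, so it lies inside a point stabilizer of a chosen $v_0 \in V$; by compactness of $O(n)$ and discreteness of $\Gamma$ this kernel is finite. The image $\tran_V(G)$ is cocompact in $V'$ since it produces the same orbits on $V$ as $G$ does and $V/G$ is compact. The delicate point is discreteness of the image in $V'$: for a sequence of distinct $g_n = (A_n, a_n) \in G$ with $\tran_V(g_n) \to 0$, passing to a subsequence with $A_n \to A$ via compactness of $O(n)$ and using the identity $a_n = \tran_V(g_n) + (I - A_n) v_0$ would force $g_n$ to converge in $E(n)$, contradicting that the closed discrete set $\Gamma$ has no accumulation points. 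Thus $\tran_V(G)$ is a full-rank lattice in $V'$, and a standard Euclidean ball-counting argument yields $\smallabs{\tran_V(G) \cap B(0,r)} = \Theta(r^{\dim V})$; multiplying by the size of the kernel then gives the growth estimate for $N_G^V(r)$.

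For the remaining two links, I would verify $N_G(r) = \Theta(N_G^V(r))$ and $N_\Gamma(r) = \Theta(N_G(r))$ via orthogonal decompositions and coset bookkeeping. Fixing $v_0 \in V$ and writing $g = (A, a) \in G$, applying $g$ to $v_0$ gives $a = \tran_V(g) + (I - A) v_0$; combined with $A|_{V'} = \id_{V'}$ from Lemma \ref{lem:identity_restriction}, this decomposes $\tran(g)$ as an orthogonal sum of $\tran_V(g) \in V'$ and a vector in $(V')^\perp$ of norm at most $2|v_0|$. Pythagoras then yields $|\tran_V(g)| \leq |\tran(g)| \leq |\tran_V(g)| + 2|v_0|$. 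For the second comparison, writing $\Gamma$ as a disjoint union of $[\Gamma : G]$ cosets $\gamma_i G$, the semidirect product composition rule produces a uniform bound on $\bigl| |\tran(\gamma_i g)| - |\tran(g)| \bigr|$ in terms of $\max_i |\tran(\gamma_i)|$. Both comparisons absorb into the $\Theta$ class, since polynomial growth is stable under additive shifts of the argument.

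The main obstacle is the discreteness of $\tran_V(G)$ in $V'$; since $G$ need not be a translation subgroup of $E(n)$, the argument must blend the orthogonal decomposition, the compactness of $O(n)$, and the discreteness of $\Gamma$ in the full Euclidean group. The remainder is routine. The ``in particular'' statement then follows at once: if $N_\Gamma(r) = \Theta(r^k)$, the already established $N_\Gamma(r) = \Theta(r^{\dim \Gamma})$ and uniqueness of the polynomial exponent force $k = \dim \Gamma$.
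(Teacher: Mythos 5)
Your proposal is correct and follows essentially the same route as the paper: establish $N_G^V(r) = \Theta(r^{\dim V})$ by showing $\tran_V(G)$ is a full-rank lattice in $V'$ (with finite kernel) and volume-counting, then pull this across the chain via the two comparison lemmas based on the orthogonal decomposition of $\tran(g)$ and coset bookkeeping. The only cosmetic difference is that you anchor the decomposition at an arbitrary $v_0 \in V$ rather than at the orthogonal projection of $0$ onto $V$, giving a looser additive constant $2|v_0|$ in place of $2d(0,V)$, which is harmless for the $\Theta$-class.
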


Ihe proof of Lemma \ref{lem:dimension_growth_interpretation} is relatively straightforward, but we discuss it in several sub-lemmas due to its length.

\begin{lemma}\label{lem:comparability_sublemma_1}
	Let $\Gamma$ be a discrete subgroup of $E(n)$, and let $G$ be a subgroup of $\Gamma$ of finite index $m$. Then there exists a constant $C \geq 0$ for which
	\begin{equation}\label{eq:first_theta_estimate}
		N_G(r) \leq N_\Gamma(r) \leq m N_G(r+C).
	\end{equation}
\end{lemma}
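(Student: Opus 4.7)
The first inequality $N_G(r) \le N_\Gamma(r)$ is immediate from the inclusion $G \subseteq \Gamma$, since every $g \in G$ with $\smallabs{T(g)} \le r$ counts as an element of $\Gamma$ with $\smallabs{T(g)} \le r$. So the content of the lemma is the second inequality.

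My plan for the second inequality is to use a coset decomposition. Since $G$ has index $m$ in $\Gamma$, I fix coset representatives $\gamma_1, \dots, \gamma_m \in \Gamma$ with $\Gamma = \bigsqcup_{i=1}^m \gamma_i G$, and set
\[
	C = \max_{1 \le i \le m} \smallabs{\tran(\gamma_i)}.
\]
Given any $\gamma \in \Gamma$ with $\smallabs{\tran(\gamma)} \le r$, I write $\gamma = \gamma_i g$ for a unique $i$ and $g \in G$. Using the composition rule from Section \ref{sect:eucl_isom}, namely $(A,a) \circ (B,b) = (AB, Ab + a)$, I get $\tran(\gamma) = \ort(\gamma_i)\, \tran(g) + \tran(\gamma_i)$, so $\tran(g) = \ort(\gamma_i)^{-1}(\tran(\gamma) - \tran(\gamma_i))$.

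Since $\ort(\gamma_i) \in O(n)$ is an isometry on $\R^n$, the triangle inequality yields
\[
	\smallabs{\tran(g)} = \smallabs{\tran(\gamma) - \tran(\gamma_i)} \le \smallabs{\tran(\gamma)} + \smallabs{\tran(\gamma_i)} \le r + C.
\]
Thus every $\gamma \in \Gamma$ counted by $N_\Gamma(r)$ corresponds to a pair $(i, g)$ with $1 \le i \le m$ and $g \in G$ satisfying $\smallabs{\tran(g)} \le r + C$. Since this correspondence is injective, there are at most $m \cdot N_G(r+C)$ such pairs, giving $N_\Gamma(r) \le m\, N_G(r+C)$. No step should pose any real obstacle; the only thing to check is the isometry identity $\smallabs{\ort(\gamma_i)^{-1} v} = \smallabs{v}$, which is immediate from $\ort(\gamma_i) \in O(n)$.
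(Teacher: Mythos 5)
Your proof is correct and follows essentially the same approach as the paper: a coset decomposition $\Gamma = \bigsqcup_i \gamma_i G$, the choice $C = \max_i \smallabs{\tran(\gamma_i)}$, and the observation that $\ort(\gamma_i) \in O(n)$ preserves norms. Your bookkeeping is in fact a bit cleaner — the paper's displayed identity $\tran(\gamma') = \ort(\gamma_i)\tran(\gamma) + \tran(\gamma_i)$ has the roles of $\gamma$ and $\gamma'$ swapped relative to the composition rule, though the estimate it draws is symmetric enough that the conclusion is unaffected.
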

\begin{proof}
	The lower bound of \eqref{eq:first_theta_estimate} is simply due to $G \subset \Gamma$. For the upper bound, we decompose $\Gamma$ into $m$ conjugacy classes $\Gamma = \gamma_1 G \cup \ldots \cup \gamma_m G$, and let $\gamma \in \Gamma$. We may then write $\gamma = \gamma_i \gamma'$, where $\gamma' \in G$ and $i \in \{1, \dots m\}$. Now $\tran(\gamma') = \ort(\gamma_i)\tran(\gamma) + \tran(\gamma_i)$, and hence we obtain for the selection $C = \max_i \{\abs{\tran(\gamma_i)}\}$ the estimate
	\[
		\abs{\tran(\gamma)} 
		= \abs{\ort(\gamma_i)\tran(\gamma)} 
		= \abs{\tran(\gamma') - \tran(\gamma_i)} 
		\leq \abs{\tran(\gamma')} + C.
	\]
	Hence, every element $\gamma \in \Gamma$ with $\abs{\tran(\gamma)} \leq r$ is a product of a $\gamma' \in G$ with $\abs{\tran(\gamma)} \leq r + C$ and one of $m$ elements $\gamma_i \in \Gamma$. The upper bound of \eqref{eq:first_theta_estimate} follows.
\end{proof}

\begin{lemma}\label{lem:comparability_sublemma_2}
	Let $\Gamma$ be a discrete subgroup of $E(n)$, and let $(G, V)$ be a finite index cocompact translation pair of $\Gamma$. Then
	\begin{equation}\label{eq:translation_part_comparison}
	\abs{\tran_V(\gamma)} \leq \abs{\tran(\gamma)} \leq \abs{\tran_V(\gamma)} + 2d(0, V),
	\end{equation}
	and therefore
	\begin{equation}\label{eq:second_theta_estimate}
		N_G^V(r - 2d(0, V)) \leq N_G(r) \leq N_G^V(r).
	\end{equation}
\end{lemma}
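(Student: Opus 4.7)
The plan is to derive both inequalities of \eqref{eq:translation_part_comparison} from a single orthogonal decomposition of $\tran(\gamma)$. Let $V'$ be the direction space parallel to $V$, and let $p_V \in V$ be the closest point of $V$ to the origin, so that $\abs{p_V} = d(0, V)$ and $p_V \in (V')^\perp$. Writing $\gamma(x) = \ort(\gamma) x + \tran(\gamma)$ and noting that $\gamma(p_V) \in V$ differs from $p_V$ precisely by $\tran_V(\gamma)$, I obtain the key identity
\[
	\tran_V(\gamma) \;=\; \gamma(p_V) - p_V \;=\; \tran(\gamma) + (\ort(\gamma) - I)p_V.
\]

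The point is that the two summands on the right-hand side of the rearranged identity $\tran(\gamma) = \tran_V(\gamma) + (I - \ort(\gamma))p_V$ lie in orthogonally complementary subspaces. Indeed, $\tran_V(\gamma) \in V'$ since it is the translation vector of a translation of the affine space $V$. For the other summand, Lemma \ref{lem:identity_restriction} tells us that $\ort(\gamma)$ fixes $V'$ pointwise; as an orthogonal transformation it therefore also preserves $(V')^\perp$, and so $(I - \ort(\gamma))p_V \in (V')^\perp$. The Pythagorean theorem then gives
\[
	\abs{\tran(\gamma)}^2 \;=\; \abs{\tran_V(\gamma)}^2 + \abs{(I - \ort(\gamma))p_V}^2,
\]
which directly yields the lower bound $\abs{\tran_V(\gamma)} \leq \abs{\tran(\gamma)}$. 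The upper bound will follow from the triangle inequality combined with the trivial estimate $\abs{(I - \ort(\gamma))p_V} \leq \abs{p_V} + \abs{\ort(\gamma) p_V} = 2\abs{p_V} = 2 d(0, V)$.

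The counting bounds \eqref{eq:second_theta_estimate} are then an immediate consequence: inequality \eqref{eq:translation_part_comparison} yields the set inclusions $\{\gamma \in G : \abs{\tran(\gamma)} \leq r\} \subseteq \{\gamma \in G : \abs{\tran_V(\gamma)} \leq r\}$ and $\{\gamma \in G : \abs{\tran_V(\gamma)} \leq r - 2d(0, V)\} \subseteq \{\gamma \in G : \abs{\tran(\gamma)} \leq r\}$, from which both inequalities in \eqref{eq:second_theta_estimate} follow by counting. The only step with any real geometric content is the orthogonality observation in the second paragraph; once that is in place, everything else is routine.
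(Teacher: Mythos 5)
Your proof is correct and follows essentially the same route as the paper: you decompose $\tran(\gamma) = \tran_V(\gamma) + (I-\ort(\gamma))p_V$ (the paper's $w$ is your $p_V$), use Lemma \ref{lem:identity_restriction} to see that the second summand is orthogonal to $V$, and conclude. The only cosmetic difference is that you spell out the Pythagorean step for the lower bound, which the paper leaves implicit.
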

\begin{proof}
	Let $\gamma \in G$. Let $w$ be the orthogonal projection of 0 to $V$, in which case $w = w-0$ is orthogonal to $V$. We note that $\gamma(w) - \gamma(0) = \ort(\gamma)(w-0) = \ort(\gamma)w$. Since $\ort(\gamma)$ is conformal, and by Lemma \ref{lem:identity_restriction} $\ort(\gamma)$ is identity on the linear space parallel to $V$, we have that $\ort(\gamma)w$ is orthogonal to $V$.
	
	However, now we have
	\[
		\tran(\gamma) = \gamma(0) = \gamma(w) - \ort(\gamma)w
		= (\gamma(w) - w) + (w - \ort(\gamma)w).
	\]
	Since $w \in V$ on which $\gamma$ acts by translation, we in fact have $\gamma(w) - w = \tran_V(\gamma)$. Hence, we now have a decomposition
	\[
		\tran(\gamma) = \tran_V(\gamma) + w',
	\]
	where $\tran_V(\gamma)$ is parallel to $V$ and $w'$ is orthogonal to $V$. Moreover, $\abs{\ort(\gamma)w} = \abs{w} = d(0, V)$, and therefore $\abs{w'} \leq 2d(0, V)$. Hence, the estimate \eqref{eq:translation_part_comparison} follows, and the estimate \eqref{eq:second_theta_estimate} is an immediate consequence.
\end{proof}

\begin{proof}[Proof of Lemma \ref{lem:dimension_growth_interpretation}]
	It remains to show that $N_G^V(r) = \Theta(r^{\dim V})$; once this is done, the rest of the claim follows from Lemmas \ref{lem:comparability_sublemma_1} and \ref{lem:comparability_sublemma_2}. For this, consider the group $\tran_V(G) \leqslant \R^n$, that is, the image of $G$ under the homomorphism $\tran_V \colon G \to \R^n$. We show first that $\tran_V(G)$ is discrete. Indeed, suppose to the contrary that there exists a bounded sequence $(\tran_V(\gamma_j))_{j=1}^\infty$ of distinct elements of $\tran_V(G)$. By \eqref{eq:translation_part_comparison}, $(\tran(\gamma_j))_{j=1}^\infty$ would also be bounded, so by moving to a subsequence we may assume that it converges. However, since $O(n)$ is compact, we may also by moving to a subsequence assume that $(\ort(\gamma_j))_{j=1}^\infty$ converges. This is a contradiction, since now $(\gamma_j)_{j=1}^\infty$ is a convergent sequence of distinct elements of $G$, which contradicts the discreteness of $G$.
	
	Hence, if $V'$ is the linear space parallel to $V$, then $\tran_V(G)$ is a discrete subgroup of $V'$ and $V' / \tran_V(G)$ is compact. It follows by a classical volume counting argument that the number of elements in $\overline{B}_{\R^n}(0, r) \cap \tran_V(G) = \overline{B}_{V'}(0, r) \cap \tran_V(G)$ grows at a rate of $\Theta(r^{\dim V'}) = \Theta(r^{\dim V})$. Indeed, since $\tran_V(G)$ is a discrete subgroup of $V'$, there exists $r_0 > 0$ for which the balls $B_{V'}(x, r_0)$ for $x \in \tran_V(G)$ are disjoint. By volume considerations you can fit at most $C r^{\dim V'}$ such balls into $\overline{B}_{V'}(0, r)$ for some $C > 0$. On the other hand, since $V' / \tran_V(G)$ is compact, there exists $R_0 > 0$ for which $B_{V'}(0, R_0) / \tran_V(G) = V' / \tran_V(G)$. It follows that every point of $V'$ is contained in some $B_{V'}(x, R_0)$, $x \in \tran_V(G)$, and again due to volume you need at least $cr^{\dim V'}$ of such balls to cover $\overline{B}_{V'}(0, r)$ for some $c > 0$.
	
	Finally, let $G_0$ be the subgroup of all elements of $G$ which are identity on $V$. Then $G_0$ is finite, as otherwise there exists a sequence of distinct elements of $G_0$, and an argument similar to the one used to prove discreteness of $\tran_V(G)$ yields a contradiction with the discreteness of $G$. Let $l$ be the number of elements in $G_0$. Then for every $v \in \tran_V(G)$, the set of $\gamma \in G$ with $\tran_V(\gamma) = v$ contains exactly $l$ elements. We finally conclude that $N_G^V(r) = \Theta(lr^{\dim V}) = \Theta(r^{\dim V})$, and the claim follows.
\end{proof}

In addition to Lemma \ref{lem:dimension_growth_interpretation}, we require a growth estimate for the set of orthogonal components $\ort(\Gamma)$ that will play a key role in Section \ref{sect:length_estimate}. For a discrete $\Gamma \leqslant E(n)$, let
\[
	\ort(\Gamma) = \left\{ \ort(\gamma) : \gamma \in \Gamma \right\},
\]
and let $\Lambda_\Gamma(r)$ denote the number of different elements $\ort(\gamma) \in \ort(\Gamma)$ with $\abs{T(\gamma)} \leq r$. Moreover, similarly as before, if $(G, V)$ is a cocompact translation pair of $\Gamma$, we let $\Lambda^V_G(r)$ be the number of $\ort(\gamma) \in \ort(\Gamma)$ with $\abs{\tran_V(\gamma)} \leq r$.

\begin{lemma}\label{lem:orth_part_growth_estimate}
	Let $\Gamma$ be a discrete subgroup of $E(n)$, and let $(G, V)$ be a finite index cocompact translation pair of $\Gamma$. Let $k = \dim G$ and $l = \dim \Gamma_T$. Then 
	\[
		\Lambda_G^V(r) = \Theta(\Lambda_G(r)) = \Theta(r^{k-l})
	\]
\end{lemma}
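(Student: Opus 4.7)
The claim has two parts. For the first, $\Lambda_G^V(r) = \Theta(\Lambda_G(r))$, the plan is to rerun the elementwise comparison from Lemma \ref{lem:comparability_sublemma_2}; the two-sided inequality $\smallabs{\tran_V(\gamma)} \le \smallabs{\tran(\gamma)} \le \smallabs{\tran_V(\gamma)} + 2d(0,V)$ holds for every $\gamma \in G$, and therefore transfers directly to the counts of distinct orthogonal parts, yielding $\Lambda_G^V(r - 2d(0,V)) \le \Lambda_G(r) \le \Lambda_G^V(r)$.

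For $\Lambda_G^V(r) = \Theta(r^{k-l})$, the kernel of $\ort$ restricted to $G$ is precisely $G_T$, so $\ort(G) \cong G/G_T$, and for each $A \in \ort(G)$ with representative $g_A$ the set $\{\tran_V(g) : \ort(g) = A\}$ is exactly the affine coset $\tran_V(g_A) + L_T$, where $L := \tran_V(G)$ is a rank-$k$ lattice in the linear subspace $V'$ parallel to $V$ and $L_T := \tran(G_T) = \tran_V(G_T)$ is a rank-$l$ sublattice (both by the analysis in the proof of Lemma \ref{lem:dimension_growth_interpretation}). The induced map $\ort(G) \to L/L_T$ is surjective with finite kernel $G_0 = \ker(\tran_V|_G)$, so $\Lambda_G^V(r)$ is, up to a bounded multiplicative constant, the number of cosets of $L_T$ in $L$ whose distance to the origin is at most $r$. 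Setting $W = \Span(L_T) \subset V'$, decomposing $V' = W \oplus W^\perp$ orthogonally, and letting $\pi \colon V' \to W^\perp$ be the orthogonal projection, the identity
\[
d(0, v + L_T)^2 = d_W(w, L_T)^2 + \smallabs{\pi(v)}^2
\]
for $v = w + \pi(v)$, together with the uniform bound $d_W(w, L_T) \le C_0$ coming from a fundamental domain for $L_T$ in $W$, reduces the count further (up to an additive $C_0$ in $r$ and the finite multiplicative constant $[L \cap W : L_T]$) to counting points of $\pi(L)$ of norm at most $r$.

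The main obstacle is to verify that $\pi(L)$ is an honest rank-$(k-l)$ lattice in $W^\perp$ rather than merely a dense subgroup. This is where the special structure of $W$ enters: because $W$ is spanned by the lattice $L_T \subset L$, the intersection $L \cap W$ is a full-rank sublattice of $W$, whence $\pi(L) \cong L/(L \cap W)$ is a discrete rank-$(k-l)$ subgroup of $W^\perp$. Once this is settled, the same volume-packing argument used at the end of the proof of Lemma \ref{lem:dimension_growth_interpretation} gives $\#\{w \in \pi(L) : \smallabs{w} \le r\} = \Theta(r^{k-l})$, which combines with the reductions above to yield $\Lambda_G^V(r) = \Theta(r^{k-l})$.
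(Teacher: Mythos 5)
Your proof is correct but takes a genuinely different route. You decompose the translation lattice $L = \tran_V(G)$ along the subspace $W = \Span(L_T)$: since $L_T$ is full-rank in $W$, so is $L \cap W$, and the orthogonal projection $\pi(L) \subset W^\perp$ is therefore an honest discrete lattice of rank $k-l$; you then reduce $\Lambda_G^V(r)$, up to bounded multiplicative and additive constants, to counting points of $\pi(L)$ in a ball of radius $r$, so the exponent $k-l$ appears geometrically as the rank of $\pi(L)$. The paper bypasses the lattice decomposition entirely and proceeds by a counting sandwich: if $\ort(\gamma_1) = \ort(\gamma_2)$ with both translation parts of norm $\leq r$, then $\gamma_1^{-1}\gamma_2 \in \Gamma_T$ with $\smallabs{\tran(\gamma_1^{-1}\gamma_2)} \leq 2r$, giving $\Lambda_G(r) \geq N_G(r)/N_{\Gamma_T}(2r)$; conversely, composing a fixed $\gamma_1$ with translations in $\Gamma_T$ of norm $\leq r$ yields $N_{\Gamma_T}(r)$ elements of $\Gamma$ sharing the orthogonal part of $\gamma_1$, and these sets are pairwise disjoint across distinct orthogonal parts, giving $\Lambda_G(r) \leq N_\Gamma(2r)/N_{\Gamma_T}(r)$. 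The asymptotics $N_G(r) = \Theta(r^k)$, $N_\Gamma(r) = \Theta(r^k)$, $N_{\Gamma_T}(r) = \Theta(r^l)$ from Lemma \ref{lem:dimension_growth_interpretation} then finish immediately. The paper's route is shorter and simply reuses the already-established counting machinery; yours is more structural and makes the exponent $k-l$ transparent, at the cost of needing the discreteness of $\pi(L)$ --- the one genuinely nontrivial step, which you correctly identify and resolve via the full-rankness of $L \cap W$. Both proofs obtain $\Lambda_G^V(r) = \Theta(\Lambda_G(r))$ identically from \eqref{eq:translation_part_comparison}.
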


\begin{proof}
	Suppose $\gamma_1, \gamma_2 \in G$ satisfy $\ort(\gamma_1) = \ort(\gamma_2)$ and $\tran(\gamma_1), \tran(\gamma_2) \in \overline{B^n}(r)$. Then $\ort(\gamma_1^{-1} \circ \gamma_2) = \id_{\R^n}$, and therefore $\gamma_1^{-1} \circ \gamma_2 \in \Gamma_T$. Consequently, $\gamma_2 = \gamma_1 \circ \gamma'$ for some $\gamma' \in \Gamma_T$ with $\abs{\tran(\gamma')} \leq 2r$. We obtain that 
	\[
		\Lambda_G(r) \geq \frac{N_G(r)}{N_{\Gamma_T}(2r)}.
	\]
	
	Conversely, let $\gamma_1 \in G$ with $\abs{\tran(\gamma_1)} \leq r$. We define a set
	\[
		F_{\gamma_1} = \left\{ \gamma_1 \circ \gamma' : 
			\gamma' \in \Gamma_T, \abs{\tran(\gamma')} \leq r \right\}.
	\]
	Note that $\tran(\gamma) \leq 2r$ for every $\gamma \in F_{\gamma_1}$, and that $F_{\gamma_1}$ has $N_{\Gamma_T}(r)$ different elements. Moreover, if $\gamma_2 \in G$ is another element with $\abs{\tran(\gamma_1)} \leq r$ and $\ort(\gamma_2) \neq \ort(\gamma_1)$, it follows that $F_{\gamma_1} \cap F_{\gamma_2} = \emptyset$, since every element in $F_{\gamma_i}$ has the same orthogonal part as $\gamma_i$. Hence, we obtain that $N_\Gamma(2r) \geq N_{\Gamma_T}(r) \Lambda_G(r)$, which when rearranged yields
	\[
		\Lambda_G(r) \leq \frac{N_\Gamma(2r)}{N_{\Gamma_T}(r)}.
	\]
	
	By Lemma \ref{lem:dimension_growth_interpretation}, $N_{\Gamma}(r) = \Theta(N_G(r)) = \Theta(r^k)$, and $N_{\Gamma_T}(r) = \Theta(r^l)$. Hence, $\Lambda_G(r) = \Theta(r^{k-l})$. Finally, we note that by \eqref{eq:translation_part_comparison} of Lemma \ref{lem:comparability_sublemma_2}, we obtain that
	\[
		\Lambda_G^V(r - 2d(0, V)) \leq \Lambda_G(r) \leq \Lambda_G^V(r),
	\]
	which yields the remaining claim that $\Lambda_G^V(r) = \Theta(\Lambda_G(r))$
\end{proof}

\subsection{Other properties}
We conclude this section by recalling two useful basic results related discrete subgroups of $E(n)$. The first one is a connection between dimension and finite index subgroups. 

\begin{lemma}\label{lem:dimension_finite_index_connection}
	Let $\Gamma$ be a discrete subgroup of $E(n)$, and let $G$ be a subgroup of $\Gamma$. Then $G$ is of finite index in $\Gamma$ if and only if $\dim \Gamma = \dim G$.
\end{lemma}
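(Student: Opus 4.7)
The plan is to treat the two implications separately. For the forward direction ($G$ of finite index in $\Gamma$ implies $\dim G = \dim \Gamma$), the growth characterization of dimension does the work almost immediately. Lemma \ref{lem:comparability_sublemma_1} already gives $N_G(r) \leq N_\Gamma(r) \leq m N_G(r+C)$ whenever $[\Gamma : G] = m < \infty$, so $N_G(r) = \Theta(N_\Gamma(r))$. Since $G$ is discrete (as a subgroup of a discrete group) and $E(n)$ acts on $G$ just as it does on $\Gamma$, Lemma \ref{lem:dimension_growth_interpretation} applied to both groups yields $r^{\dim G} = \Theta(r^{\dim \Gamma})$, hence $\dim G = \dim \Gamma$.

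The backward direction is the substantive part, and the strategy is to reduce to an abelian lattice problem. Assuming $\dim G = \dim \Gamma = k$, I would invoke Theorem \ref{thm:isometry_group_fact} to obtain a finite index cocompact translation pair $(\Gamma_0, V)$ of $\Gamma$ with $\Gamma_0$ abelian, and then pass to $H = G \cap \Gamma_0$. Since $[G : H] \leq [\Gamma : \Gamma_0] < \infty$ by a standard coset-counting argument, the already-proven forward direction gives $\dim H = k$. Because $[\Gamma : H] = [\Gamma : \Gamma_0][\Gamma_0 : H]$ and $[\Gamma : \Gamma_0] < \infty$, it suffices to prove $[\Gamma_0 : H] < \infty$, as this then forces $[\Gamma : G] < \infty$.

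The remaining step lives entirely inside the abelian group $\Gamma_0$, and I would analyze it through the translation homomorphism $\tran_V \colon \Gamma_0 \to V'$, where $V'$ is the linear space parallel to $V$. As recorded in the proof of Lemma \ref{lem:dimension_growth_interpretation}, $\tran_V(\Gamma_0)$ is a full-rank lattice in $V'$ and $\ker(\tran_V|_{\Gamma_0})$ is finite. Restricted to $H$, the image $\tran_V(H)$ is a subgroup of this lattice (hence discrete) and the kernel is still finite, so $\tran_V(H)$ is a lattice in some subspace $V_H' \subseteq V'$. The volume-counting argument from Lemma \ref{lem:dimension_growth_interpretation} applies unchanged to give $N_H^V(r) = \Theta(r^{\dim V_H'})$, and the estimate \eqref{eq:translation_part_comparison} of Lemma \ref{lem:comparability_sublemma_2} remains valid for elements of $H \subseteq \Gamma_0$, so $N_H(r) = \Theta(r^{\dim V_H'})$. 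Comparing with $N_H(r) = \Theta(r^k)$ (from $\dim H = k$) forces $\dim V_H' = k$, so $V_H' = V'$ and $\tran_V(H)$ has full rank in the lattice $\tran_V(\Gamma_0)$, hence finite index there. Combining this finite index with the finiteness of both kernels yields $[\Gamma_0 : H] < \infty$.

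The main obstacle is the backward direction: equal growth rates alone do not imply finite index for general discrete groups, so a purely growth-based argument cannot close the gap. The crucial move is using Theorem \ref{thm:isometry_group_fact} to replace the original discrete group by an abelian one, where the question reduces to the transparent statement that a full-rank subgroup of a lattice has finite index. Once this abelian reduction is in place, the rest is a direct reuse of the volume arguments already developed in the proofs of Lemmas \ref{lem:comparability_sublemma_2} and \ref{lem:dimension_growth_interpretation}.
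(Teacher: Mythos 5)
Your proof is correct. The forward direction is identical to the paper's (combine Lemma \ref{lem:comparability_sublemma_1} with Lemma \ref{lem:dimension_growth_interpretation}). The backward direction, however, takes a genuinely different route. The paper argues directly by contradiction within $\Gamma$: if $G$ had infinitely many cosets $\gamma_j G$, then each coset $\gamma_j G$ with $\abs{\tran(\gamma_j)} \leq r_i$ contributes at least $N_G(r - r_i)$ elements to the ball of radius $r$, giving $N_\Gamma(r) \geq i\, N_G(r - r_i)$; since both sides are $\Theta(r^k)$, this blows up for $i > C_1 C_2 2^k$. Your argument instead reduces to the abelian lattice case via Theorem \ref{thm:isometry_group_fact}: pass to $H = G \cap \Gamma_0$, use the (already proved) forward direction to see $\dim H = k$, then analyze $\tran_V(H)$ as a full-rank sublattice of $\tran_V(\Gamma_0)$, and finish with the standard fact that a full-rank sublattice has finite index. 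Both work. The paper's version is shorter and self-contained, needing only the growth estimates already in hand; yours is conceptually more transparent but imports the Wolf-type structure theorem and needs a little bookkeeping with kernels. One small correction to your closing remark: you assert that ``a purely growth-based argument cannot close the gap,'' but the paper's proof is precisely such an argument --- the polynomial growth bounds from Lemma \ref{lem:dimension_growth_interpretation}, together with the coset-counting estimate, do suffice without any structural reduction.
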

\begin{proof}
	Suppose first that $G$ is of finite index in $\Gamma$. By Lemma \ref{lem:dimension_growth_interpretation}, we have $N_\Gamma(r) = \Theta(r^{\dim \Gamma})$ and $N_G(r) = \Theta(r^{\dim G})$. Hence, it follows from Lemma \ref{lem:comparability_sublemma_1} that $r^{\dim \Gamma} = \Theta(r^{\dim G})$, which in turn implies $\dim \Gamma = \dim G$.
	
	Suppose then that $\dim \Gamma = \dim G = k$. Then by Lemma \ref{lem:dimension_growth_interpretation}, there exist $C_1, C_2, r_0 > 0$ such that $N_\Gamma(r)/r^k \leq C_1$ and $r^k/N_G(r) \leq C_2$ when $r > r_0$. Suppose to the contrary that $G$ has infinitely many conjugacy classes $\gamma_j G$. For every positive integer $i$, let $r_i$ be such that $\abs{T(\gamma_j)} \leq r_i$ for $i$ different $\gamma_j$.
	Now, if $\gamma \in G$ is such that $\abs{\tran(\gamma)} \leq r - r_i$ and $\gamma_j$ is such that $\abs{\tran{\gamma_j}} \leq r_i$, then $\abs{\tran(\gamma_j \gamma)} \leq r$. Hence, for $\gamma_j$ as above, the set of elements $\gamma' \in \gamma_j G$ with $\abs{\tran(\gamma')} \leq r$ has at least $N_G(r-r_i)$ elements. Therefore, we have obtained $N_\Gamma(r) \geq i N_G(r-r_i)$ for every $i > 0$ and $r > r_i$. 
	
	Suppose then that $i > 0$, $r > r_0$ and $r > 2r_i$. Now we have
	\[
	C_1 r^k \geq N_\Gamma(r) \geq i N_G(r-r_i) \geq  \frac{i(r-r_i)^k}{C_2} >  \frac{ir^k}{2^k C_2}.
	\]
	This is clearly a contradiction if $i > C_1 C_2 2^k$, completing the proof.
\end{proof}

Finally, we note that for groups of dimension at least $n-1$, we in fact have a stronger generalization of Bieberbach's first theorem than the one given by Theorem \ref{thm:isometry_group_fact}. The example given in Section \ref{subsect:translation_subgroup} shows that $n-1$ is the lowest dimension where this is possible.

\begin{lemma}\label{lem:translation_subgroup_special_case}
	Let $\Gamma$ be a discrete subgroup of $E(n)$, and suppose $\dim \Gamma \geq n-1$. Then $\dim \Gamma_T = \dim \Gamma$.
\end{lemma}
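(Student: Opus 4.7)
The plan is to apply Theorem \ref{thm:isometry_group_fact} to obtain a finite index cocompact translation pair $(G, V)$ of $\Gamma$ with $\Gamma_T \subset G$, and then to reduce the problem to showing that $\Gamma_T$ itself has finite index in $\Gamma$; once that is established, Lemma \ref{lem:dimension_finite_index_connection} immediately gives $\dim \Gamma_T = \dim \Gamma$. Write $k = \dim \Gamma = \dim V$, so $k \in \{n-1, n\}$.

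The case $k = n$ is almost immediate: here $V$ is an $n$-dimensional affine subspace of $\R^n$, hence $V = \R^n$. Every $g \in G$ restricts to $V$ as a translation by the definition of a cocompact translation pair, so every $g \in G$ is a translation of $\R^n$, i.e.\ $G \subset \Gamma_T$. Combined with the inclusion $\Gamma_T \subset G$ from Theorem \ref{thm:isometry_group_fact}, this forces $G = \Gamma_T$, so $\Gamma_T$ has finite index in $\Gamma$ and we are done.

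The case $k = n-1$ is the main content. Let $V'$ be the $(n-1)$-dimensional linear space parallel to $V$. By Lemma \ref{lem:identity_restriction}, $\ort(g)|_{V'} = \id_{V'}$ for every $g \in G$. Any element of $O(n)$ which fixes an $(n-1)$-dimensional linear subspace pointwise must be either $\id_{\R^n}$ or the reflection $R$ across $V'$, so $\ort(G) \subset \{\id_{\R^n}, R\}$. Setting $G_T = \{g \in G : \ort(g) = \id\} = G \cap \Gamma_T$, the homomorphism $\ort \colon G \to \{\id, R\}$ has kernel $G_T$, so $G_T$ has index at most $2$ in $G$. Since $G$ has finite index in $\Gamma$ and $G_T \subset \Gamma_T \subset \Gamma$, the intermediate subgroup $\Gamma_T$ also has finite index in $\Gamma$, and Lemma \ref{lem:dimension_finite_index_connection} finishes the proof.

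No serious obstacle is expected; the only subtle point is recognizing that the conclusion of Lemma \ref{lem:identity_restriction}, combined with the codimension-one hypothesis, leaves essentially no room for $\ort(G)$ beyond $\{\id, R\}$, which is precisely what fails in the screw-motion example of Section \ref{subsect:translation_subgroup} when the codimension is larger.
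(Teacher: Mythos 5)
Your proof is correct, and it takes essentially the same route as the paper: both identify that $\ort(g)$ must fix the $(n-1)$-dimensional space parallel to $V$ (via Lemma~\ref{lem:identity_restriction}), so it is either $\id_{\R^n}$ or the reflection across that space, and both exploit this dichotomy to conclude. The only cosmetic differences are that you handle $k=n$ directly from the cocompact translation pair rather than citing Bieberbach's first theorem, and you package the $k=n-1$ conclusion as an index argument ($[G : G \cap \Gamma_T] \leq 2$, then Lemma~\ref{lem:dimension_finite_index_connection}) whereas the paper observes that $\gamma^2$ is a translation and leaves the rest terse; your phrasing is arguably the cleaner of the two.
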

\begin{proof}
	Case $\dim \Gamma = n$ is precisely Bieberbach's first theorem. Suppose then that $\dim \Gamma = n-1$. Let $(G, V)$ be a finite index cocompact translation pair of $\Gamma$, and let $W$ be the linear space orthogonal to $V$. Let $\gamma \in G$ and suppose $\gamma \vert_V$ is a translation by $a \neq 0$. Then $\gamma$ is completely determined by $a$ and $\gamma \vert_W$. Since $W$ is 1-dimensional, we have 2 options for $\gamma \vert_W$: either it is identity, in which case $\gamma$ is a translation, or it reflects $W$ across $V$, in which case $\gamma \circ \gamma$ is a translation with translation vector $2a$. The claim follows.
\end{proof}


\section{Discrete isometry groups and linear conformal maps}\label{sect:lin_conf_interaction}

Suppose that $A \colon \R^n \to \R^n$ is a linear conformal map, and that $\Gamma$ is a discrete subgroup of $E(n)$. Then $A$ is of the form $A = \lambda A'$, where $A' \in O(n)$ and $\lambda > 0$. Moreover, since we require conformal maps to preserve orientation, $A'$ is in fact orientation preserving. A necessary condition for $A$ and $\Gamma$ to be part of a Latt\`es triple $(\Gamma, h, A)$ into a Riemannian manifold $M$ is that $A\Gamma A^{-1} \subset \Gamma$. In this section, we look into the implications of this condition.

We begin by considering the most restrictive case of a non-expanding $A$ satisfying $A\Gamma A^{-1} \subset \Gamma$. For $B = cB'$ with $c > 0$ and $B' \in O(n)$, we denote by $\theta_B \colon E(n) \to E(n)$ the conjugation homomorphism given by $\theta_B(\gamma) = B \gamma B^{-1}$ for $\gamma \in E(n)$. Note that $\theta_B$ is an ismomorphism with inverse $\theta_{B^{-1}}$.

\begin{lemma}\label{lem:Gamma_A_interaction_nonexpanding}
	Let $\Gamma$ be a discrete subgroup of $E(n)$, and let $A = \lambda A'$, where $A' \in O(n)$ and $\lambda > 0$. Suppose that $A \Gamma A^{-1} \subset \Gamma$ and $\lambda \leq 1$. Then $A \Gamma A^{-1} = \Gamma$.
\end{lemma}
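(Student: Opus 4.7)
The plan is to split into cases based on whether $\lambda = 1$ or $\lambda < 1$, exploiting Lemma \ref{lem:basic_prop_of_O(n)} together with the discreteness of $\Gamma$ in both cases. Since $\theta_A$ is an injective group homomorphism, to conclude $A\Gamma A^{-1} = \Gamma$ it suffices either to produce the reverse inclusion $A^{-1}\gamma A \in \Gamma$ for every $\gamma \in \Gamma$, or else to show that $\Gamma$ is finite and apply a cardinality argument. I would note at the start that iterating the hypothesis gives $A^k \Gamma A^{-k} \subset \Gamma$ for all $k \geq 0$, and that in coordinates, $\theta_A(B,b) = (A'BA'^{-1}, \lambda A'b)$ since the scaling factor cancels in the orthogonal part.

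For the case $\lambda = 1$, I would simply use Lemma \ref{lem:basic_prop_of_O(n)} to choose a subsequence with $(A')^{m_j} \to \id$. Given $\gamma \in \Gamma$, the sequence
\[
    A^{m_j - 1} \gamma A^{-(m_j - 1)} \;=\; A^{-1}\bigl(A^{m_j}\gamma A^{-m_j}\bigr)A
\]
lies in $\Gamma$ for large $j$, and a direct computation of its orthogonal and translation parts (both of which depend continuously on $(A')^{m_j}$ when $\lambda = 1$) shows it converges in $E(n)$ to $A^{-1}\gamma A$. Discreteness of $\Gamma$ then forces the sequence to equal its limit eventually, so $A^{-1}\gamma A \in \Gamma$, yielding $\Gamma \subset A\Gamma A^{-1}$.

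For the case $\lambda < 1$ I would first show that $\Gamma_T = \{\id\}$. Since $\theta_A$ sends pure translations to pure translations, $\theta_A(\Gamma_T) \subset \Gamma_T$, which amounts to $\lambda A'\tran(\Gamma_T) \subset \tran(\Gamma_T)$. Setting $k = \dim \Gamma_T$ and $W = \Span_\R \tran(\Gamma_T)$, this inclusion forces $A'W = W$, and the index of the sublattice $\lambda A'\tran(\Gamma_T)$ in $\tran(\Gamma_T)$ equals $\lvert \det(\lambda A'|_W) \rvert = \lambda^k$, using that $A'|_W$ is orthogonal. Since this index is a positive integer, $\lambda < 1$ forces $k = 0$, i.e., $\Gamma_T = \{\id\}$. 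With this in hand, I would again invoke Lemma \ref{lem:basic_prop_of_O(n)} to find $(A')^{m_j} \to \id$; for $\gamma = (B,b) \in \Gamma$ the sequence $A^{m_j}\gamma A^{-m_j} = ((A')^{m_j} B (A')^{-m_j},\, \lambda^{m_j} (A')^{m_j} b)$ lies in $\Gamma$ and converges to $(B, 0)$ because $\lambda^{m_j} \to 0$. By discreteness $(B, 0) \in \Gamma$, and then $(B,0)^{-1}(B,b) = (\id, B^{-1}b) \in \Gamma_T = \{\id\}$, so $b = 0$. Hence $\Gamma$ is a discrete subgroup of the compact group $O(n)$, therefore finite, and the injectivity of $\theta_A$ combined with $A\Gamma A^{-1} \subset \Gamma$ gives equality by cardinality.

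The main obstacle is the $\lambda < 1$ case: the direct limit argument that works so cleanly for $\lambda = 1$ only produces elements of the form $(B, 0)$, which by itself does not give $A^{-1}\Gamma A \subset \Gamma$. The key insight is that these partial conclusions, combined with triviality of $\Gamma_T$ obtained via the sublattice covolume computation, are enough to collapse $\Gamma$ to a finite subgroup of $O(n)$, after which the conclusion is immediate. Getting the lattice index computation right — in particular, verifying $A'W = W$ so that $A'|_W$ is genuinely orthogonal with $|\det| = 1$ — is the one step where care is needed.
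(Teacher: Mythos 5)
Your proof is correct, but it takes a genuinely different route from the paper. The paper treats $\lambda \leq 1$ uniformly with a single finite-permutation argument: for fixed $\gamma \in \Gamma$, the set $\Gamma_r = \{\gamma' \in \Gamma : \abs{\tran(\gamma')} \leq \abs{\tran(\gamma)}\}$ is a closed discrete subset of the compact set $O(n) \times \overline{B^n}(0,r)$, hence finite; because $\lambda \leq 1$, the conjugation $\theta_A$ restricts to an injective self-map of this finite set, hence a permutation, so some power $\theta_{A^k}$ fixes $\gamma$, and $\gamma' = \theta_{A^{k-1}}(\gamma)$ gives $\gamma = \theta_A(\gamma')$. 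You instead split into $\lambda = 1$ and $\lambda < 1$. For $\lambda = 1$ you use Lemma \ref{lem:basic_prop_of_O(n)} together with closedness and discreteness of $\Gamma$ to get $A^{-1}\gamma A \in \Gamma$ directly. For $\lambda < 1$ you first show $\Gamma_T$ is trivial via the lattice index computation ($[\tran(\Gamma_T) : \lambda A'\tran(\Gamma_T)] = \lambda^{\dim\Gamma_T}$ must be a positive integer), then collapse $\Gamma$ into $O(n)$ using the same convergence trick, concluding $\Gamma$ is finite and finishing by cardinality. Both approaches are valid. The paper's is shorter and avoids the case split; yours is more structural and, as a bonus, recovers in detail the remark the paper makes after its proof (that $\lambda < 1$ forces $\Gamma$ to be a finite subgroup of $O(n)$). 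One small remark: the lattice covolume step could be replaced by a one-line minimal-norm argument (if $v \in \tran(\Gamma_T)$ is a nonzero element of minimal norm, then $\lambda A' v$ is a shorter nonzero element, contradiction), which is essentially what the paper itself does in the proof of Lemma \ref{lem:extension_and_non-injectivity}.
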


\begin{proof}
	Let $\gamma \in \Gamma$. The goal is to show that $\gamma = \theta_{A} (\gamma')$ for some $\gamma' \in \Gamma$. Our strategy is to show that $\theta_{A^k} (\gamma) = \gamma$ for some $k > 0$. Then, since $\theta_{A^l} (\gamma) \in \Gamma$ for every $l \geq 0$, we may select $\gamma' = \theta_{A^{k-1}} (\gamma)$.
	
	Let $r = \abs{\tran(\gamma)}$, and let $\Gamma_r = \{\gamma' \in \Gamma : \abs{\tran(\gamma')} \leq r \}$. Since the map given by $\gamma' \mapsto \abs{\tran(\gamma')}$ is continuous, $\Gamma_r$ is a closed discrete subset of the compact space $O(n) \times \overline{B^n}(0, r) \subset E(n)$. Therefore, $\Gamma_r$ is finite. Since $\theta_{A}\vert_{\Gamma_r}$ is an injective self-map on a finite set, $\theta_{A} \vert_{\Gamma_r}$ is a permutation on $\Gamma_r$, and therefore there exists $k > 0$ for which $\theta_{A}^k$ is the identity on $\Gamma_r$. Hence, $\theta_{A^k}(\gamma) = \theta_{A}^k(\gamma) = \gamma$, and the proof is complete.
\end{proof}

We remark that the case $\lambda < 1$ in Lemma \ref{lem:Gamma_A_interaction_nonexpanding} is in fact even more restricted. In particular, if $\lambda < 1$, then for any $k > 0$ the identity $\theta_{A^k}(\gamma) = \gamma$ implies $\tran(\gamma) = 0$. Hence, $\Gamma$ is in fact contained in $O(n)$, and since $\Gamma$ is also closed and discrete, it is finite.

Next, we observe the general case where $A$ may be expanding. While the condition $A \Gamma A^{-1} \subset \Gamma$ does not imply $A \Gamma A^{-1} = \Gamma$ for expanding $A$, we still have the following.

\begin{lemma}\label{lem:Gamma_A_interaction_index}
	Let $\Gamma$ be a discrete subgroup of $E(n)$, and let $A = \lambda A'$, where $A' \in O(n)$ and $\lambda > 0$. Suppose that $A \Gamma A^{-1} \subset \Gamma$. Then $A \Gamma A^{-1}$ is a subgroup of finite index in $\Gamma$.
\end{lemma}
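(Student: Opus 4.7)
The plan is to reduce the question to a comparison of dimensions. Since $\Gamma$ is discrete and $A\Gamma A^{-1} \subset \Gamma$, the subgroup $A\Gamma A^{-1}$ is automatically discrete, so Lemma \ref{lem:dimension_finite_index_connection} applies: to show finite index, it suffices to show $\dim (A \Gamma A^{-1}) = \dim \Gamma$. By the growth characterization in Lemma \ref{lem:dimension_growth_interpretation}, this in turn reduces to showing that $N_{A\Gamma A^{-1}}(r) = \Theta(r^{\dim \Gamma})$.

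The key computation is the behavior of $\tran$ under conjugation by $A$. Writing $\gamma = (B, b) \in O(n) \rtimes \R^n$ and using the semidirect product formula with $A = \lambda A'$, a direct calculation gives
\[
    A \gamma A^{-1} = (A' B (A')^{-1},\; \lambda A' b),
\]
so $\tran(A \gamma A^{-1}) = \lambda A' \tran(\gamma)$, and since $A'$ is an isometry we get $\abs{\tran(A \gamma A^{-1})} = \lambda \abs{\tran(\gamma)}$. Consequently, the map $\theta_A \colon \Gamma \to A \Gamma A^{-1}$ is a bijection sending the ball $\{\gamma \in \Gamma : \abs{\tran(\gamma)} \leq r/\lambda\}$ exactly onto $\{\gamma' \in A\Gamma A^{-1} : \abs{\tran(\gamma')} \leq r\}$, yielding the identity
\[
    N_{A\Gamma A^{-1}}(r) = N_\Gamma(r/\lambda).
\]

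Using Lemma \ref{lem:dimension_growth_interpretation} one then concludes $N_{A\Gamma A^{-1}}(r) = \Theta((r/\lambda)^{\dim \Gamma}) = \Theta(r^{\dim \Gamma})$, where in the last step the constant $\lambda > 0$ is absorbed into the $\Theta$. Applying Lemma \ref{lem:dimension_growth_interpretation} in the other direction gives $\dim (A \Gamma A^{-1}) = \dim \Gamma$, and Lemma \ref{lem:dimension_finite_index_connection} finishes the proof.

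There is no real obstacle here; the argument is essentially a bookkeeping observation once the groundwork of Section \ref{sect:eucl_isom} is in place. The only substantive point is recognizing that the relevant conjugation acts on translation parts by the scalar $\lambda$ times an orthogonal transformation, which leaves the growth rate invariant up to the constants allowed by $\Theta$. In the special case $\lambda \leq 1$ the stronger Lemma \ref{lem:Gamma_A_interaction_nonexpanding} already gives index $1$, so the content of the present lemma lies in the expanding case $\lambda > 1$, which the growth argument handles uniformly.
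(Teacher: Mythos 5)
Your proof is correct and follows essentially the same route as the paper: reduce to a dimension comparison via Lemma \ref{lem:dimension_finite_index_connection}, compute that conjugation by $A$ scales $\abs{\tran(\cdot)}$ by the factor $\lambda$, deduce the counting identity (your $N_{A\Gamma A^{-1}}(r) = N_\Gamma(r/\lambda)$ is the paper's $N_\Gamma(r) = N_{A\Gamma A^{-1}}(\lambda r)$ after a change of variable), and conclude via Lemma \ref{lem:dimension_growth_interpretation}. The only difference is your slightly more explicit semidirect-product bookkeeping, which is accurate.
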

\begin{proof}
	Clearly $A \Gamma A^{-1}$ is a subgroup. To show that $A \Gamma A^{-1}$ has finite index in $\Gamma$, the strategy is to show that $\dim A \Gamma A^{-1} = \dim \Gamma$, after which the result follows from Lemma \ref{lem:dimension_finite_index_connection}. 
	
	We note that the restriction $\theta_A\vert_{\Gamma}$ is a bijection $\Gamma \to A \gamma A^{-1}$. Moreover, if $\gamma \in \Gamma$, then $\tran(\theta_A(\gamma)) = A\tran(\gamma)$ and consequently $\abs{\tran(\theta_A(\gamma))} = \lambda \abs{\tran(\gamma)}$. Hence, we obtain that
	\[
		N_\Gamma(r) = N_{A \Gamma A^{-1}}(\lambda r). 
	\]
	Therefore, by Lemma \ref{lem:dimension_growth_interpretation}, $A \Gamma A^{-1}$ and $\Gamma$ have the same dimension.
\end{proof}

Next, we consider the interaction of $A$ with a finite index cocompact translation pair $(G, V)$ of $\Gamma$. We may apply Lemma \ref{lem:Gamma_A_interaction_index} to obtain the following result.

\begin{lemma}\label{lem:conjugation_map_parallel_space}
	Let $\Gamma$ be a discrete subgroup of $E(n)$, let $(G, V)$ be a finite index cocompact translation pair of $\Gamma$, and let $A = \lambda A'$ where $A' \in O(n)$ and $\lambda > 0$. Suppose that $A \Gamma A^{-1} \subset \Gamma$. Then $AV = V + a$ for some $a \in \R^n$.
\end{lemma}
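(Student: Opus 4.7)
The plan is to produce a second finite index cocompact translation pair $(G', V')$ of $\Gamma$ with $V' = AV$, after which Lemma \ref{lem:dimension_uniqueness} immediately yields $AV = V + a$ for some $a \in \R^n$.

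First I would define $G' = AGA^{-1}$. Since $G \subset \Gamma$ and $A \Gamma A^{-1} \subset \Gamma$ by hypothesis, we have $G' \subset \Gamma$. The conjugation map $\theta_A \colon E(n) \to E(n)$ is a group isomorphism that restricts to an isomorphism $\Gamma \to A \Gamma A^{-1}$ carrying $G$ to $G'$. Since $G$ has finite index in $\Gamma$, it follows that $G'$ has finite index in $A \Gamma A^{-1}$. Combined with Lemma \ref{lem:Gamma_A_interaction_index}, which tells us that $A \Gamma A^{-1}$ has finite index in $\Gamma$, we conclude that $G'$ has finite index in $\Gamma$.

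Next I would verify that $G'$ acts cocompactly on $AV$ by translations. For any $g \in G$ and $x \in V$ we have $g(x) = x + \tran_V(g)$, hence for $Ax \in AV$,
\[
	(AgA^{-1})(Ax) = A g(x) = A(x + \tran_V(g)) = Ax + A\tran_V(g),
\]
so $AgA^{-1}$ restricts to $AV$ as a translation by $A \tran_V(g)$. In particular $G' (AV) = AV$ and every element of $G'$ is a translation on $AV$. The map $A \colon V \to AV$ is a bijection intertwining the $G$-action on $V$ and the $G'$-action on $AV$, so it descends to a bijection $V/G \to AV/G'$; since $A$ is continuous with continuous inverse on these affine subspaces, this descent is a homeomorphism, and compactness of $V/G$ yields compactness of $AV/G'$.

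Thus $(G', AV)$ is a finite index cocompact translation pair of $\Gamma$. By Lemma \ref{lem:dimension_uniqueness}, $AV$ is parallel to $V$, i.e.\ $AV = V + a$ for some $a \in \R^n$. There is no serious obstacle here; the only thing to keep straight is the bookkeeping that $\theta_A$ is an isomorphism of groups (so finite index is preserved from $G \leqslant \Gamma$ to $G' \leqslant A \Gamma A^{-1}$) which must then be combined with Lemma \ref{lem:Gamma_A_interaction_index} to promote finite index in $A \Gamma A^{-1}$ to finite index in $\Gamma$.
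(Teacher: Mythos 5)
Your proof is correct and follows essentially the same route as the paper: exhibit $(AGA^{-1}, AV)$ as a finite index cocompact translation pair of $\Gamma$ and then invoke Lemma \ref{lem:dimension_uniqueness}. If anything, your finite-index bookkeeping (first $G' \leqslant A\Gamma A^{-1}$ has finite index because $\theta_A$ is an isomorphism, then promote via Lemma \ref{lem:Gamma_A_interaction_index}) is spelled out a touch more carefully than the paper's one-liner, which reads as if $AGA^{-1} \subset G$ were known.
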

\begin{proof}
	 We show that $(AGA^{-1}, AV)$ is a finite index cocompact translation pair of $\Gamma$, after which the claim follows from Lemma \ref{lem:dimension_uniqueness}. Since $G$ acts on $V$ by translations and $V / G$ is compact, the group $AGA^{-1}$ acts on $AV$ by translations and $AV / AGA^{-1}$ is compact. It remains to verify that $AGA^{-1}$ is of finite index in $\Gamma$. This follows since Lemma \ref{lem:Gamma_A_interaction_index} yields that $AGA^{-1}$ is of finite index in $G$.
\end{proof}

Next, we show that if the map $A$ is expanding, then we may in fact find a finite index cocompact translation pair $(G, V)$ for which $AV = V$.

\begin{lemma}\label{lem:conjugation_map_linearization}
	Let $\Gamma$ be a discrete subgroup of $E(n)$, and let $A = \lambda A'$, where $A' \in O(n)$ and $\lambda > 1$. The there exists a finite index cocompact translation pair $(G, V)$ of $\Gamma$ for which $G$ is abelian, $V$ is linear, $AV = V$, and $G$ contains a finite index subgroup of $\Gamma_T$.
\end{lemma}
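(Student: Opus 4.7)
The plan is to start from the cocompact translation pair given by Theorem~\ref{thm:isometry_group_fact}, locate the unique $A$-invariant linear subspace parallel to it, and cut the group down to those elements which still act on this subspace by translations.

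First, apply Theorem~\ref{thm:isometry_group_fact} to pick a finite-index cocompact translation pair $(G_0, V_0)$ with $G_0$ normal abelian in $\Gamma$ and $\Gamma_T \subseteq G_0$. Let $V'$ be the linear subspace parallel to $V_0$, $W = (V')^\perp$, and $w_0 \in W$ the nearest point of $V_0$ to the origin, so $V_0 = V' + w_0$. Lemma~\ref{lem:conjugation_map_parallel_space} gives $AV_0 = V_0 + a$ for some $a$, hence $AV' = V'$ and $AW = W$. Since $A|_W$ is a conformal linear expansion by $\lambda > 1$, every complex eigenvalue of $A|_W$ has modulus $\lambda > 1$, so $I - A|_W$ is invertible on $W$. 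A short computation then shows that the only affine subspace parallel to $V_0$ which $A$ fixes as a set is $V'$ itself. Set $V := V'$, which is linear and $A$-invariant.

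To choose $G$, note by Lemma~\ref{lem:identity_restriction} that every $g \in G_0$ acts on $V$ as $v \mapsto v + \tran(g)$, and so preserves $V$ precisely when $\tran(g) \in V$. Writing $\tran(g) = t(g) + (w_0 - \ort(g) w_0)$, where $t(g) \in V'$ is the translation of $g|_{V_0}$ and $w_0 - \ort(g) w_0 \in W$, this reduces to the condition $\ort(g) w_0 = w_0$. Define
\[
G := \{ g \in G_0 : \ort(g) w_0 = w_0 \}.
\]
Then $G$ is a subgroup of the abelian $G_0$, acts on $V$ by translations, and contains all of $\Gamma_T$ (since $\ort(\tau) = \id$ for any translation $\tau$).

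\emph{The main obstacle} is then to show that $[G_0 : G] < \infty$, which is equivalent to the orbit $\ort(G_0) \cdot w_0 \subseteq W$ being finite. I expect the expanding property of $A$ to be crucial here. If the orbit were infinite, it would accumulate on the sphere $\{w \in W : \abs{w} = \abs{w_0}\}$, producing distinct $g_j \in G_0$ with $\ort(g_j)$ convergent in $O(n)$; discreteness of $\Gamma$ forces $\abs{\tran(g_j)} \to \infty$. Combining Lemma~\ref{lem:basic_prop_of_O(n)} applied to $A'$ with appropriately chosen iterates of the conjugation $\theta_A \colon \Gamma \to \Gamma$, the plan is to exploit the $\lambda^k$-scaling of translation vectors under $\theta_A$ (and the fact that for $g \in A^k G_0 A^{-k}$ one can reverse the conjugation inside $\Gamma$) to extract an infinite sequence of distinct elements of $\Gamma$ whose translation parts remain bounded---a contradiction with discreteness. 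Balancing the expansion factor $\lambda^k$ against the subsequential smallness of $A'^{m_j} - \id$ is the delicate point I anticipate confronting. Once this finite-index claim is settled, Lemma~\ref{lem:dimension_finite_index_connection} gives $\dim G = \dim G_0 = \dim \Gamma$, so $\tran(G) \subseteq V$ is a finite-index sublattice of the full-rank lattice $t(G_0)$ in $V$; this yields the cocompactness of the $G$-action on $V$, and all of the remaining assertions (abelian, $V$ linear, $AV = V$, $\Gamma_T \subseteq G$) are then immediate.
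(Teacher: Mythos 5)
Your identification of $V$ is correct and in fact slightly more direct than the paper's: you observe from Lemma~\ref{lem:conjugation_map_parallel_space} that the linear subspace parallel to $V_0$ is automatically $A$-invariant, whereas the paper reaches the same $V$ via an affine-combination construction. Your characterization of the natural candidate group $G = \{g \in G_0 : \ort(g)w_0 = w_0\}$ (equivalently, those $g$ with $\tran(g) \in V$) is also correct, as is the remark that $\Gamma_T \subseteq G$ and that cocompactness would follow once $[G_0 : G] < \infty$ is known.

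However, the finite-index claim — which you yourself flag as ``the main obstacle'' — is a genuine gap, and your proposed route to it does not work. You suggest producing distinct elements of $\Gamma$ with bounded translation parts by conjugating with iterates of $A$, but the conjugation that stays inside $\Gamma$ is $\theta_A$, not $\theta_{A^{-1}}$, and $\theta_A$ \emph{expands} translation vectors by $\lambda^k$ rather than contracting them. Trying to ``reverse'' the conjugation requires first landing in $A^k G_0 A^{-k}$, and the elements $h_j$ one extracts with $\ort(h_j) \to \id$ have $\smallabs{\tran(h_j)} \to \infty$, so the two effects pull in the same (bad) direction and no bounded sequence emerges. Without a separate idea there is no contradiction with discreteness.

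The paper circumvents this by a different geometric device that you are missing. Letting $W$ be the orthogonal complement of $V_0$ through $0$ and $v_m = A^m V_0 \cap W$, the paper shows via a cone argument (using Lemma~\ref{lem:basic_prop_of_O(n)} and the expansion $\smallabs{v_m} = \lambda^m \smallabs{v_0}$) that $0 = a_0 v_0 + \cdots + a_m v_m$ is a \emph{finite} affine combination. This finiteness is exactly what your approach lacks: it yields $V' = a_0 V_0 + \cdots + a_m A^m V_0$ and lets one take $G' = G_0 \cap A G_0 A^{-1} \cap \cdots \cap A^m G_0 A^{-m}$, a finite intersection of finite-index subgroups (each $A^k G_0 A^{-k}$ being finite-index in $\Gamma$ by Lemma~\ref{lem:Gamma_A_interaction_index}), hence automatically finite-index, and one then checks directly that $G'$ acts on $V'$ by translations because its elements translate each $A^k V_0$ by the same vector. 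Your $G$ contains this $G'$, so the finite-index claim is true — but the affine-combination argument is precisely the missing ingredient needed to prove it.
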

\begin{proof}
	\enlargethispage{\baselineskip}
	By Theorem \ref{thm:isometry_group_fact}, there exists a finite index cocompact translation pair $(G, V)$ of $\Gamma$ for which $G$ is abelian and $\Gamma_T \subset G$. We note that by Lemma \ref{lem:conjugation_map_parallel_space}, the spaces $A^m V \subset \R^n$ are parallel for $m \in \N$. Let $W \subset \R^n$ be the orthogonal complement of $V$ which contains $0$, and let $v_0$ be the unique point of intersection of $V$ and $W$. Since the spaces $A^m V$ are parallel to $V$, $W$ also intersects each space $A^m V$ at a unique point $v_m$. See Figure \ref{fig:affine_selection} for an illustration of the selection of $v_i$.
	
	\begin{figure}[h]
		\centering
		\begin{tikzpicture}
			\fill (0,0) circle[radius=1.3pt] node[anchor=east]{$0$};
			
			\draw (-2, -.5) -- (0,.5) -- (2,1.5) node[anchor=west]{$V$};
			\draw (-2, -2) -- (0,-1) -- (2,-0) node[anchor=west]{$AV$};
			\draw (-2, .5) -- (0,1.5) -- (2,2.5) node[anchor=west]{$A^2V$};
			
			\draw (1, -2) -- (0,0) -- (-1.25,2.5) node[anchor=west]{$W$};
			\draw (-2.5, -1.25) -- (0,0) -- (3.5,1.75) node[anchor=west]{$W^\perp$};
			
			\fill (-1/5,2/5) circle[radius=1.3pt] node[anchor=east]{$v_0$};
			\fill (2/5,-4/5) circle[radius=1.3pt] node[anchor=east]{$v_1$};
			\fill (-3/5,6/5) circle[radius=1.3pt] node[anchor=east]{$v_2$};
			
			\draw (2/15,1/15) -- (1/15,3/15) 
				-- (-1/15, 2/15);
			\draw (-1/5 + 2/15,2/5 + 1/15) -- (-1/5 + 1/15, 2/5 + 3/15) 
				-- (-1/5 - 1/15, 2/5 + 2/15);
			\draw (2/5 + 2/15,-4/5 + 1/15) -- (2/5 + 1/15, -4/5 + 3/15) 
				-- (2/5 - 1/15, -4/5 + 2/15);
			\draw (-3/5 + 2/15, 6/5 + 1/15) -- (-3/5 + 1/15, 6/5 + 3/15) 
				-- (-3/5 - 1/15, 6/5 + 2/15);
		\end{tikzpicture}
		\caption{An illustration of the selection process of the points $v_m$. Note that despite being represented by lines in the picture, the spaces depicted may be higher dimensional.}
		\label{fig:affine_selection}
	\end{figure}
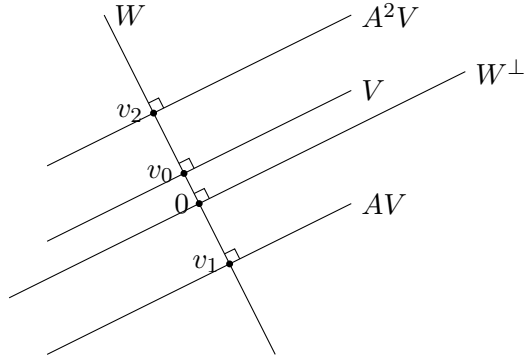

	We note that if $W^\perp$ is the linear orthogonal complement of $W$, then $W^\perp$ is parallel to $V$. Hence, linearity of $A$ and the fact that $AV$ is parallel to $V$ yields that $AW^\perp = W^\perp$, and thus conformality of $A$ yields $AW = W$. It follows that $v_m = A^m(v_0)$ for every $m \geq 0$. 
	
	Next, we show that $0$ is an affine combination of the elements $v_m$. Suppose to the contrary that $0 \notin W' = \aff(\{v_0, v_1, \ldots\})$, where $\aff(S)$ denotes the affine hull of a set $S \subset \R^n$, defined by
	\[
		\aff(S) = \left\{
			\sum_{i=1}^m a_i s_i : m \in \Z_+, s_i \in S, a_i \in \R, \sum_{i=1}^m a_i = 1
		\right\}.
	\]  
	Let $L$ be the line passing through $0$ and $v_0$. Then $L$ intersects $W'$ at $v_0 \neq 0$, and $L$ is not contained in $W'$ since $0 \notin W'$. It follows that $W'$ contains no 1-dimensional affine subspace parallel to $L$. 
	
	Let $0 < c < 1$, and consider a cone $C = \{x \in \R^n : d(x, L) \leq c\abs{x}\}$ around $L$. Since $W'$ has no 1-dimensional affine subspaces parallel to $L$, the intersection $W' \cap C$ is bounded if $c$ is sufficiently small. However, by using Lemma \ref{lem:basic_prop_of_O(n)} on $A'$, we find arbitrarily large $m \in \N$ for which $A^m L \subset C$. It follows that for such $m$ we have $v_m = A^m v_0 \in C \cap W'$. On the other hand, since $A$ is expanding and $v_0 \neq 0$, we have that $\abs{A^m(v_0)}$ becomes arbitrarily large as $m$ increases. Since $W' \cap C$ is bounded, this is a contradiction. See Figure \ref{fig:affine_contradiction} for an illustration.
	
	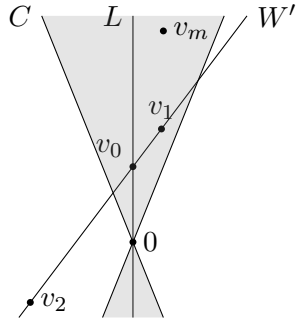
\begin{figure}[h]
		\centering
		\begin{tikzpicture}
			\fill (0,0) circle[radius=1.3pt] node[anchor=west]{$0$};
			\draw (0,-1) -- (0, 0) -- (0,3) node[anchor=east]{$L$};
			\draw (-1.5, -1) -- (0,1) -- (1.5,3) node[anchor=west]{$W'$};
			
			\fill (0,1) circle[radius=1.3pt] node[anchor=south east]{$v_0$};
			\fill (1.5/4,1.5) circle[radius=1.3pt] node[anchor=south]{$v_1$};
			\fill (-1.35,-0.8) circle[radius=1.3pt] node[anchor=west]{$v_2$};
			
			\draw [draw=black, fill=gray, fill opacity=0.2]
				(1.2,3) -- (0,0) -- (-1.2,3);
			\draw (-1.2,3) node[anchor=east]{$C$};
			\draw [draw=black, fill=gray, fill opacity=0.2]
				(0.4,-1) -- (0,0) -- (-0.4,-1);
			
			\fill (.4,2.8) circle[radius=1.3pt] node[anchor=west]{$v_m$};
		\end{tikzpicture}
		\caption{The contradiction which shows that $0 \in W' = \aff(\{v_0, v_1, \ldots\})$: by selecting exponents $m$ where the orthogonal component of $A^m$ is close to $\id_{\R^n}$, we find points $v_m = A^m(v_0)$ in the cone $C$ arbitrarily far from $0$.}
		\label{fig:affine_contradiction}
	\end{figure}
	
	Hence, we have that $a_0 v_0 + \cdots + a_m v_m = 0$ for some $m \geq 0$ and $a_i \in \R$ with $a_0 + \cdots + a_m = 1$. Let $V' = a_0 V + \cdots + a_m A^m V$. Then $V'$ is an affine space parallel to $V$, and moreover since $0 \in V'$, the space $V'$ is in fact linear. 
	
	Let $G' = G \cap AGA^{-1} \cap \ldots \cap A^m G A^{-m}$. Then $G'$ is a subgroup of finite index in $\Gamma$, and as in the proof of Lemma \ref{lem:dimension_uniqueness} we have for all $g \in G'$ and $0 \leq k \leq m$ that $g\vert_{A^k V}$ is a translation with translation vector $\tran_V(g)$. Since the maps $g \in G'$ are affine, we also have for every $g \in G'$ that $g\vert_{V'}$ is a translation and $\tran_{V'}(g) = \tran_V(g)$. Moreover, since $G' \leqslant G$ and $G$ is abelian, $G'$ is also abelian. 
	
	Finally, we note that for every $k \in \{1, \ldots, \infty\}$ we have that $A^k\Gamma_TA^{-k}$ is contained in $\Gamma_T$, and hence by Lemma \ref{lem:Gamma_A_interaction_index} $A^k\Gamma_TA^{-k}$ is a finite index subgroup in $\Gamma_T$. Since $\Gamma_T \subset G$, we therefore have that $\bigcap_{k=1}^m A^k\Gamma_TA^{-k}$ is a finite index subgroup of $\Gamma_T$ contained in $G'$. We conclude that $(G', V')$ is the desired finite index cocompact translation pair of $\Gamma$.
\end{proof}

Before beginning the proof of Theorem \ref{prop:translations_under_A-condition}, we require the following standard lemma about the commutation of elements of $O(n)$ proven in eg.\ \cite[Lemma 3.2.4]{Wolf1967book} and \cite[Lemma 2.1]{Szczepanski2012book}. For any two elements $h_1, h_2$ in a group $H$, we recall the \emph{commutator} $[h_1, h_2] = h_1 h_2 h_1^{-1} h_2^{-1}$.

\begin{lemma}\label{lem:wolf_conjugation_lemma}
	For each $n \in \Z_+$, there is a neighborhood $U_n \subset O(n)$ of $\id_{\R^n} \in O(n)$ with the following property: if $A, B \in U_n$ and $A$ commutes with $[A, B]$, then $A$ commutes with $B$.
\end{lemma}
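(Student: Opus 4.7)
The plan is to work in exponential coordinates on $O(n)$ and exploit the semisimple structure of the adjoint representation on the compact Lie algebra $\mathfrak{o}(n)$. First, I would choose $U_n \subset SO(n)$ small enough that the exponential map $\exp\colon \mathfrak{o}(n) \to O(n)$ is a diffeomorphism from a ball around $0$ onto $U_n$, and such that whenever $A, B \in U_n$ one has $[A, B] \in U_n$ with $\|\log[A,B]\|$ bounded by a constant multiple of $\|\log A\| \cdot \|\log B\|$ (via standard BCH estimates). For such $A, B$, I write $X = \log A$, $Y = \log Y$, and $Z = \log[A, B]$; by the Baker--Campbell--Hausdorff formula, $Z = [X, Y] + \text{(higher-order nested brackets in $X, Y$)}$.

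Next, the hypothesis that $A$ commutes with $[A, B]$ translates, via $\mathrm{Ad}$-equivariance of $\exp$ and the identity $\mathrm{Ad}(\exp X) = e^{\mathrm{ad}(X)}$, into the Lie-algebraic equation
\[
e^{\mathrm{ad}(X)}(Z) = Z.
\]
The key structural input is that $O(n)$ is compact: there exists an $\mathrm{Ad}$-invariant inner product on $\mathfrak{o}(n)$ (the negative of the Killing form extended by the centre), with respect to which $\mathrm{ad}(X)$ is skew-symmetric for every $X \in \mathfrak{o}(n)$. Consequently $\mathrm{ad}(X)$ is semisimple with purely imaginary spectrum. By further shrinking $U_n$ so that $\|\mathrm{ad}(X)\| < 2\pi$, every nonzero eigenvalue $i\mu$ of $\mathrm{ad}(X)$ satisfies $|\mu| \in (0, 2\pi)$, so $e^{i\mu} \ne 1$. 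Therefore $\ker(e^{\mathrm{ad}(X)} - I) = \ker \mathrm{ad}(X)$, and the displayed equation forces $[X, Z] = 0$.

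Finally, I would use this together with the BCH expansion to conclude $[X, Y] = 0$, which yields $AB = BA$. The leading-order identity coming from $[X, Z] = 0$ is $[X, [X, Y]] = 0$, and since $\mathrm{ad}(X)$ is semisimple, $\mathfrak{o}(n) = \ker \mathrm{ad}(X) \oplus \mathrm{range}\,\mathrm{ad}(X)$ is an orthogonal decomposition, so $[X, [X,Y]] = 0$ implies $[X, Y] \in \ker \mathrm{ad}(X) \cap \mathrm{range}\,\mathrm{ad}(X) = \{0\}$. The genuine higher-order BCH terms are handled by a bootstrapping argument: assuming the identity $[X, Y] = 0$ has been verified modulo order $k$, the BCH tail of $Z$ is of order $\geq k+1$, and re-applying the same semisimplicity argument lifts the identity to order $k + 1$; iterating and using analyticity of the BCH series for sufficiently small $X, Y$ drives the error to zero.

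The main obstacle is the last step: the equation $[X, Z] = 0$ is a single equation in the infinite BCH series for $Z(X, Y)$, and extracting from it the cleaner statement $[X, Y] = 0$ requires either the inductive BCH argument sketched above or an equivalent quantitative contraction estimate on iterated commutators $C_0 = B$, $C_{k+1} = [A, C_k]$, where the hypothesis forces $C_2 = I$ and smallness of $U_n$ forces $C_1 = I$ as well. This is the bulk of the technical work in Wolf's proof, but the Lie-algebraic reduction above makes transparent why the statement is correct.
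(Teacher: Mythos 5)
The paper itself does not prove Lemma~\ref{lem:wolf_conjugation_lemma}; it is cited to Wolf \cite[Lemma~3.2.4]{Wolf1967book} and Szczepanski \cite[Lemma~2.1]{Szczepanski2012book}, so the comparison is with their argument. Your reduction to the Lie-algebraic equation $[X,Z]=0$ is sound: $\mathrm{ad}(X)$ is skew-symmetric for an $\mathrm{Ad}$-invariant inner product on $\mathfrak{o}(n)$, and the restriction $\|\mathrm{ad}(X)\|<2\pi$ indeed gives $\ker(e^{\mathrm{ad}(X)}-I)=\ker\mathrm{ad}(X)$, so the hypothesis becomes $Z\in\ker\mathrm{ad}(X)$. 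The genuine gap is the passage from there to $AB=BA$. The proposed bootstrap is not well-posed: $X$ and $Y$ are fixed elements, not formal variables, so the phrase ``$[X,Y]=0$ verified modulo order $k$'' has no meaning, and the leading-order implication $[X,[X,Y]]=0\Rightarrow[X,Y]=0$ is not available because you only know that $[X,[X,Y]]$ equals the negated higher BCH tail, not that it vanishes. A degree-count cannot repair this, since $[X,[X,Y]]=\mathrm{ad}(X)^2Y_1$ and the operator $\mathrm{ad}(X)^2$ restricted to $(\ker\mathrm{ad}(X))^\perp$ has no lower spectral bound uniform in $X$; so smallness of high order does not force $Y_1=0$. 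The same objection applies to the alternative you sketch: $C_2=I$ together with smallness of $U_n$ does not by itself yield $C_1=I$ (two small commuting rotations need not coincide); one must use that $C_1$ is itself a commutator $[A,B]$.

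The argument can, however, be completed within your framework. Writing $[A,B]=(ABA^{-1})B^{-1}$ and $Y':=e^{\mathrm{ad}(X)}Y$, one has $Z=\log(\exp Y'\cdot\exp(-Y))=(Y'-Y)+F(Y',Y)$, where $F$ is the quadratic-and-higher BCH tail and satisfies $F(W,W)\equiv 0$; hence $\|F(Y',Y)\|\le C(\|Y'\|+\|Y\|)\,\|Y'-Y\|$ for small arguments, with $C=C(n)$. Since $e^{\mathrm{ad}(X)}$ is an isometry for the invariant inner product, $\|Y'\|=\|Y\|$, and $Y'-Y=(e^{\mathrm{ad}(X)}-I)Y$ lies in $\mathrm{range}\,\mathrm{ad}(X)=(\ker\mathrm{ad}(X))^\perp$. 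Projecting the relation $Z\in\ker\mathrm{ad}(X)$ onto this orthogonal complement gives $\|Y'-Y\|\le\|F(Y',Y)\|\le 2C\|Y\|\,\|Y'-Y\|$, and shrinking $U_n$ so that $2C\|Y\|<1$ forces $Y'=Y$, that is $Y\in\ker\mathrm{ad}(X)$ and $AB=BA$. Wolf's own route avoids BCH entirely and is worth knowing: the hypothesis is equivalent to $A$ commuting with $BAB^{-1}$; complexify, simultaneously diagonalize these two commuting unitaries, and note that an eigenvector $v$ of $BAB^{-1}$ inside the $\lambda$-eigenspace $E_\lambda$ of $A$ with eigenvalue $\mu\ne\lambda$ would give $B^{-1}v\in E_\mu\perp E_\lambda$, which is impossible once $\|B-I\|<\sqrt{2}$ because orthogonal unit vectors are at distance $\sqrt{2}$; hence $BAB^{-1}=A$.
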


With this, we may finally prove Theorem \ref{prop:translations_under_A-condition}, which we recall is stated as follows.

\begin{customthm}{\ref{prop:translations_under_A-condition}}
	Let $\Gamma \leqslant E(n)$ be discrete, and let $A \colon \R^n \to \R^n$ be a linear conformal map. Suppose that $A$ is expanding and $A \Gamma A^{-1} \subset \Gamma$. Then $\dim \Gamma_T = \dim \Gamma$.
\end{customthm}

\begin{proof}
	Denote $A = \lambda A'$, with $\lambda > 1$ and $A' \in O(n)$. By Lemma \ref{lem:basic_prop_of_O(n)}, there is a subsequence of $((A')^m)_{m=1}^\infty$ converging to $\id_{\R^n}$. Hence, there is a $m > 0$ for which $(A')^m \in U_n$, where $U_n$ is given by Lemma \ref{lem:wolf_conjugation_lemma}. By replacing $A$ with $A^m$, we may assume that $m = 1$.
	
	By Lemma \ref{lem:conjugation_map_linearization}, we may select a cocompact translation pair $(G, V)$ of $\Gamma$ for which $V$ is linear, $G$ is abelian and $AV = V$, and $G$ contains a finite index subgroup $G'$ of $\Gamma_T$. Let $V_T = \Span \left\{\tran (\gamma) : \gamma \in \Gamma_T \right\}$. Since $G'$ is of finite index in $\Gamma_T$, we also have $V_T = \Span \left\{\tran (\gamma) : \gamma \in G' \right\}$. Since $V = \Span \left\{\tran_V (\gamma) : \gamma \in G \right\}$ and $\tran_V(g) = \tran(g)$ for all $g \in G'$, we have $V_T \subset V$. 
	
	Moreover, since $A$ is linear, it follows that $A V_T = \Span \left\{A(\tran (\gamma)) : \gamma \in \Gamma_T \right\}$. However, if $\gamma \in \Gamma_T$, then $A \gamma A^{-1} \in \Gamma_T$ and $\tran(A \gamma A^{-1}) = A(\tran(\gamma))$. It follows that $AV_T \subset V_T$, and therefore $AV_T = V_T$ since $A$ is dimension-preserving. We further denote $W = V_T^\perp \cap V$, and note that due to conformality of $A$, also $AW = W$.
	
	Suppose then towards contradiction that we have $\dim \Gamma_T < \dim \Gamma$. Then there exists $g \in G$ with $\tran_V(g) \notin V_T$, and therefore the $W$-component of $\tran_V(g)$ is nonzero. Note that $\tran(g) = \tran_V(g) + v'$, where $v' \in \R^n$ is orthogonal to $V$. Since $W \subset V$, we therefore also have that the $W$-component of $\tran(g)$ is nonzero. 
	
	We show that there is an $m \in \Z_+$ for which $A g^m A^{-1} \in G$. Indeed, if we denote $G_i = A g^i A^{-1} G$, then $G_i$ are left cosets of $G$ in $\Gamma$. Since $G$ is of finite index in $\Gamma$, it has only finitely many different left cosets in $\Gamma$, and therefore we find $i, j \in \Z_+$ for which $i > j$ and $G_i = G_j$. Now a selection of $m = i-j$ yields the desired $A g^m A^{-1} \in G$.
	
	It follows now that $A g^{lm} A^{-1} \in G$ for all $l \in \Z$. Using Lemma \ref{lem:basic_prop_of_O(n)} on the sequence $(\ort(g^{-im}))_{i=1}^\infty$, we may fix $l > 0$ satisfying $\ort(g^{-lm}) \in U_n$. Now both $g^{-lm}$ and $[g^{-lm}, A]$ are elements of $G$. Since $G$ is abelian, $g^{-lm}$ and $[g^{-lm}, A]$ commute. Thus, Lemma \ref{lem:wolf_conjugation_lemma} yields that $\ort(g^{-lm})$ and $A'$ commute, and therefore $\ort([g^{-lm}, A]) = \id_{\R^n}$. Hence, the element $[g^{-lm}, A] \in G$ is a translation.
	
	However, we have $\tran([g^{-lm}, A]) = lm(A-1)\tran(g)$. Since $A$ is expanding, $AW = W$ and the $W$-component of $\tran(g)$ is nonzero, we have that the $W$-component of $\tran([g^{-lm}, A])$ is nonzero. But now $[g^{-lm}, A] \in G$ is a translation with a translation vector not contained in $V_T$. This is a contradiction, which completes the proof.
\end{proof}


\section{Extension to omitted points}\label{sect:extension}

In this section, we give a simple proof for the following lemma.
\begin{lemma}\label{lem:extension_and_non-injectivity}
	Let $(\Gamma, h, A)$ be a Latt\`es triple into a closed, connected, oriented Riemannian $n$-manifold $M$, and let $g \colon h(\R^n) \to h(\R^n)$ be the induced Latt\`es map. Then $g$ extends to a quasiregular map $g \colon M \to M$. Moreover, if $\dim \Gamma > 0$, then $g$ is non-injective if and only if $A$ is expanding.
\end{lemma}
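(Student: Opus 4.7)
My plan for the extension is to invoke the Holopainen--Rickman Picard-type theorem \cite{HolopainenRickman1992paper} to first reduce the problem to extending across a finite set: it guarantees that $E = M \setminus h(\R^n)$ is finite, so $g$ is already a quasiregular map from the open dense subset $M \setminus E$ into the compact manifold $M$, and each point of $E$ is an isolated singularity of this map. I would then invoke a standard removability theorem for isolated singularities of bounded quasiregular mappings into closed Riemannian manifolds to obtain the extension $g \colon M \to M$. Finding a clean formulation of this removability statement is where most of the work of the extension half lies.

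For the non-injectivity dichotomy, write $A = \lambda A'$ with $A' \in O(n)$ and $\lambda > 0$. The key algebraic observation is a case split driven by Lemma \ref{lem:Gamma_A_interaction_nonexpanding} and the remark following it: if $\lambda < 1$ then $\Gamma$ is forced to be finite, contradicting $\dim \Gamma > 0$; if $\lambda = 1$ then Lemma \ref{lem:Gamma_A_interaction_nonexpanding} gives equality $A\Gamma A^{-1} = \Gamma$; and if $\lambda > 1$ then by Lemma \ref{lem:Gamma_A_interaction_index} the inclusion $A\Gamma A^{-1} \subsetneq \Gamma$ is strict with finite index, for otherwise $A^{-1}$ would fall into the contracting case applied to itself and force $\Gamma$ finite. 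This dichotomy is the engine of both directions.

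For the non-expanding direction, I would use $\lambda = 1$ and $A\Gamma A^{-1} = \Gamma$ to check directly from strong automorphicness that $g(h(x)) = g(h(y))$ forces $Ax = \gamma Ay$ for some $\gamma \in \Gamma$, hence $x = (A^{-1}\gamma A)y$ with $A^{-1}\gamma A \in \Gamma$, so $h(x) = h(y)$. Thus $g$ is injective on the dense subset $h(\R^n) \subset M$. A non-constant quasiregular self-map of a closed connected oriented $n$-manifold is a branched cover whose topological degree equals the cardinality of a generic fibre, so injectivity on a dense open set forces degree one. A degree one branched cover between closed connected oriented manifolds is a homeomorphism, so the extension $g \colon M \to M$ is globally injective.

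For the expanding direction, I would pick $\gamma \in \Gamma \setminus A\Gamma A^{-1}$, fix any $x \in \R^n$, and set $y = (A^{-1}\gamma A)x$. Then $h(Ay) = h(\gamma Ax) = h(Ax)$ gives $g(h(x)) = g(h(y))$, while strong automorphicness combined with $A^{-1}\gamma A \notin \Gamma$ rules out $h(x) = h(y)$. This exhibits non-injectivity of $g$ already on $h(\R^n)$ and hence on $M$. The main obstacle across the whole lemma is the extension step; once that is in hand, the algebraic framework around Lemma \ref{lem:Gamma_A_interaction_nonexpanding} makes both directions of the characterisation short.
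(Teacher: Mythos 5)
Your extension step has a genuine gap. You reduce to removing a finite set of isolated singularities of a quasiregular map $g \colon M \setminus E \to M$ with $M$ compact, and then appeal to ``a standard removability theorem for isolated singularities of bounded quasiregular mappings into closed Riemannian manifolds.'' No such theorem exists in that generality: boundedness (which is automatic into a compact target) does not remove isolated singularities. The holomorphic map $z \mapsto e^z$, viewed as a $1$-quasiregular map $\S^2 \setminus \{\infty\} \to \S^2$, is ``bounded'' in your sense but has an essential singularity at $\infty$ and does not extend. What the removability result of Okuyama--Pankka \cite[Theorem~2.6]{OkuyamaPankka2013paper} actually requires is a point $y$ whose preimage $g^{-1}\{y\}$ does not accumulate on $E$. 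The paper obtains this by first proving (Lemma~\ref{lem:multiplicity_lemma}) that $g$ has \emph{finite global multiplicity} equal to $[\Gamma : A\Gamma A^{-1}]$, so every fiber is finite and in particular accumulates nowhere. You never establish finite multiplicity or any substitute, and your own remark that ``finding a clean formulation of this removability statement is where most of the work lies'' is precisely the missing piece. The missing ingredient is the multiplicity count from strong automorphicness, not a general removability theorem.

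For the non-injectivity dichotomy your route is genuinely different from the paper's and is largely sound. The paper deduces both directions from the same multiplicity formula $N(g) = [\Gamma : A\Gamma A^{-1}]$ combined with Lemma~\ref{lem:Gamma_A_interaction_nonexpanding}, whereas you argue directly from strong automorphicness with an explicit witness pair. This is a reasonable and somewhat more elementary alternative. Two small repairs are needed. First, in the expanding case you say ``fix any $x \in \R^n$''; strong automorphicness only rules out $h(x) = h((A^{-1}\gamma A)x)$ when $x$ is not fixed by any $\gamma'^{-1}(A^{-1}\gamma A)$ with $\gamma' \in \Gamma$ --- each of these is a nontrivial isometry (since $A^{-1}\gamma A \notin \Gamma$), so its fixed-point set has measure zero, and a generic $x$ works, but an arbitrary $x$ does not. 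Second, the degree-one argument in the non-expanding case needs the extra observation that a generic $y \in M$ has all of its preimages inside the open dense set $h(\R^n)$ (because $g(M \setminus h(\R^n))$ is finite), so injectivity of $g$ on $h(\R^n)$ really does force $\deg g = 1$; as written, the argument from injectivity on a dense open set to degree one is slightly abbreviated. Your strictness argument $A\Gamma A^{-1} \subsetneq \Gamma$ for $\lambda > 1$ (via applying the contracting remark to $A^{-1}$) is correct and is an acceptable alternative to the paper's minimal-translation-length argument.
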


The key lemma used to prove Lemma \ref{lem:extension_and_non-injectivity} is the following.
\begin{lemma}\label{lem:multiplicity_lemma}
	Let $(\Gamma, h, A)$ be a Latt\`es triple into a closed, connected, oriented Riemannian $n$-manifold $M$, and let $g \colon h(\R^n) \to h(\R^n)$ be the induced Latt\`es map. Then the multiplicity of $g$ is equal to the index $[\Gamma : A \Gamma A^{-1}]$.
\end{lemma}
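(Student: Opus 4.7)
The plan is to compute the fiber $g^{-1}(y)$ explicitly for a suitably generic point $y = h(x) \in h(\R^n)$ and show that it has exactly $[\Gamma : A\Gamma A^{-1}]$ elements; since $g$ is a quasiregular branched cover, this count equals the multiplicity at a regular value, which is exactly the multiplicity of $g$.

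First, I would select a base point $x \in \R^n$ that is generic in three senses: (i) the stabilizer $\mathrm{Stab}_\Gamma(x)$ is trivial, (ii) $x$ is not a branch point of $h$, and (iii) no point of $A^{-1}\Gamma(x)$ is a branch point of $h$. Condition (i) holds off a set of measure zero because each non-identity $\gamma \in \Gamma \leqslant E(n)$ has fixed set contained in a proper affine subspace, and $\Gamma$ is countable; conditions (ii) and (iii) hold off countable unions of measure-zero sets since the branch set of a non-constant quasiregular map has measure zero. Hence such $x$ exist (and are in fact dense).

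Second, I would identify $g^{-1}(h(x))$. By the semiconjugacy $g \circ h = h \circ A$, any preimage $z$ can be written as $z = h(w)$ with $h(Aw) = h(x)$, and strong automorphicness of $h$ yields $Aw = \gamma(x)$ for some $\gamma \in \Gamma$, i.e.\ $w \in A^{-1}\Gamma(x)$. Conversely each $h(A^{-1}\gamma(x))$ lies in the fiber. Thus
\[
	g^{-1}(h(x)) = h\bigl(A^{-1}\Gamma(x)\bigr).
\]

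Third, I would count the distinct elements of this image. For $\gamma_1, \gamma_2 \in \Gamma$, strong automorphicness gives $h(A^{-1}\gamma_1(x)) = h(A^{-1}\gamma_2(x))$ iff there is $\gamma' \in \Gamma$ with $A^{-1}\gamma_1(x) = \gamma' A^{-1}\gamma_2(x)$. Rearranging, this says that the isometry $\gamma_2^{-1}(A\gamma'A^{-1})^{-1}\gamma_1$ fixes $x$, and triviality of $\mathrm{Stab}_\Gamma(x)$ forces $\gamma_1\gamma_2^{-1} \in A\Gamma A^{-1}$. Thus the distinct preimages are in bijection with the cosets $\Gamma / A\Gamma A^{-1}$, giving $\#g^{-1}(h(x)) = [\Gamma : A\Gamma A^{-1}]$.

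Finally, I would observe that by (ii) and (iii) the point $y = h(x)$ is a regular value of $g$: around each preimage $h(A^{-1}\gamma(x))$ the map $h$ is a local homeomorphism onto a neighborhood of $y$, and the local factor $h \circ A$ inherits this, so $g$ is a local homeomorphism at each preimage. Consequently $\#g^{-1}(y)$ equals the multiplicity of $g$, which completes the proof. The main obstacle I expect is just the bookkeeping of the three genericity conditions on $x$ — especially ensuring that $y$ is simultaneously a regular value of $g$ and that the stabilizer argument collapses the relation to membership in $A\Gamma A^{-1}$; the rest is a purely algebraic coset count using strong automorphicness and the conjugacy relation.
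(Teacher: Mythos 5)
Your proof is essentially the paper's, up to a harmless reparametrization: the paper identifies $g^{-1}(h(Ax))$ with $h(A^{-1}\Gamma A \, x)$ and counts via right cosets of $\Gamma$ in $A^{-1}\Gamma A$ (then invokes Lemma~\ref{lem:index_correspondence} to pass to $[\Gamma : A\Gamma A^{-1}]$), whereas you identify $g^{-1}(h(x))$ with $h(A^{-1}\Gamma x)$ and land on cosets of $A\Gamma A^{-1}$ in $\Gamma$ directly; the coset computation and the genericity input (trivial stabilizer, via Lemma~\ref{lem:fixpoint_lemma}) are the same.

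The one place where your write-up has a genuine gap is the final paragraph. You compute the exact fiber cardinality at a single generic value $y = h(x)$ and then assert ``this count equals the multiplicity at a regular value, which is exactly the multiplicity of $g$.'' That last identification is not automatic: $g$ is defined only on $h(\R^n)$, which is an open (non-normal) subdomain of $M$, and for a discrete open map on a non-normal domain the fiber count can vary across regular values, so ``regular value'' alone does not pin down $\sup_{y'} \#g^{-1}(y')$. To justify it you would need some form of properness or a global degree for $g$, which at this point in the paper has not yet been established — indeed, this lemma is what feeds into the extension and finiteness statements later. Fortunately the fix is already latent in your own step three: the implication ``$\gamma_1\gamma_2^{-1} \in A\Gamma A^{-1}$ implies $h(A^{-1}\gamma_1 x) = h(A^{-1}\gamma_2 x)$'' requires only plain automorphy and holds for \emph{every} $x$, so $\#g^{-1}(h(x)) \leq [\Gamma : A\Gamma A^{-1}]$ for all $x$. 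That is the upper bound, with no degree theory and no need for your genericity conditions (ii) and (iii). Your stabilizer argument then gives the matching lower bound at generic $x$, which is exactly how the paper concludes.
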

\begin{proof}[Proof of Lemma \ref{lem:extension_and_non-injectivity} assuming Lemma \ref{lem:multiplicity_lemma}]
	By the Holopainen--Rickman Picard theorem \cite{HolopainenRickman1992paper}, $M \setminus h(\R^n)$ is finite. Since $g$ has finite multiplicity by Lemma \ref{lem:multiplicity_lemma}, we have for every $y \in M$ that $g^{-1}\{y\}$ is finite and therefore does not have any accumulation points. Hence, the extension part is an immediate corollary of eg.\ \cite[Theorem 2.6]{OkuyamaPankka2013paper}.
	
	Suppose $\dim \Gamma > 0$. If $A$ is not expanding, we have by Lemmas \ref{lem:Gamma_A_interaction_nonexpanding} and \ref{lem:multiplicity_lemma} that $g \colon h(\R^n) \to h(\R^n)$ is injective. This conclusion extends to $g \colon M \to M$ since $g \colon M \to M$ has at most $\deg g$ preimages at every $y \in M$, with equality for almost every $y \in M$.
	
	It remains to show non-injectivity of $g$ if $A$ is expanding. For this, we may use discreteness of $\Gamma$ and the fact that $\dim \Gamma > 0$ to select a $\gamma_0 \in \Gamma$ with minimal positive $\abs{\tran(\gamma_0)}$. Now, since $\tran(A\gamma A^{-1}) = A\tran(\gamma)$ for all $\gamma \in \Gamma$, expandingness of $A$ implies that $\gamma_0 \notin A\Gamma A^{-1}$, and therefore that $[\Gamma : A \Gamma A^{-1}] > 1$. The result then follows from Lemma \ref{lem:multiplicity_lemma}.
\end{proof}

It remains to prove Lemma \ref{lem:multiplicity_lemma}. For this, we require two easy lemmas on discrete subgroups of $E(n)$.

\begin{lemma}\label{lem:index_correspondence}
	Let $\Gamma$ be a discrete subgroup of $E(n)$, and let $A = \lambda A'$, where $A' \in O(n)$ and $\lambda > 0$. Suppose that $A \Gamma A^{-1} \subset \Gamma$. Then $A^{-1} \Gamma A \subset E(n)$ is a group, $\Gamma$ is a subgroup of $A^{-1} \Gamma A$, and 
	\[
		[A^{-1} \Gamma A : \Gamma] = [\Gamma : A \Gamma A^{-1}].
	\]
\end{lemma}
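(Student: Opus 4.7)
\medskip

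\noindent\textbf{Proof proposal for Lemma \ref{lem:index_correspondence}.}

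The plan is to reduce the whole statement to the observation that conjugation by $A$ is an automorphism of the group $E(n)$, and then exploit this automorphism to transport the inclusion $A\Gamma A^{-1} \subset \Gamma$ to the inclusion $\Gamma \subset A^{-1}\Gamma A$ and to transport indices between the two inclusions.

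First I would verify that $\theta_A \colon E(n) \to E(n)$, $\gamma \mapsto A\gamma A^{-1}$, is a well-defined group automorphism. Writing a generic $\gamma \in E(n)$ as $\gamma(x) = Bx + b$ with $B \in O(n)$ and $b \in \R^n$, a direct computation gives
\[
    A\gamma A^{-1}(x) = ABA^{-1} x + Ab = A'B(A')^{-1} x + Ab,
\]
since the scalar factor $\lambda$ cancels. Thus $\ort(A\gamma A^{-1}) = A'B(A')^{-1} \in O(n)$ and $\tran(A\gamma A^{-1}) = Ab \in \R^n$, so $A\gamma A^{-1} \in E(n)$. The same argument applied to $A^{-1} = \lambda^{-1}(A')^{-1}$ shows that $\theta_{A^{-1}}$ is also a well-defined homomorphism $E(n) \to E(n)$, and obviously $\theta_A \circ \theta_{A^{-1}} = \theta_{A^{-1}} \circ \theta_A = \id_{E(n)}$. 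Hence $\theta_A$ is a group automorphism of $E(n)$. In particular, $A^{-1}\Gamma A = \theta_{A^{-1}}(\Gamma)$ is a subgroup of $E(n)$.

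Next, applying the automorphism $\theta_{A^{-1}}$ to the given inclusion $A\Gamma A^{-1} \subset \Gamma$ yields $\Gamma \subset A^{-1}\Gamma A$, which gives the second claim. For the index identity, I note that $\theta_A$ restricts to a bijection $A^{-1}\Gamma A \to \Gamma$ which sends the subgroup $\Gamma \leqslant A^{-1}\Gamma A$ onto the subgroup $A\Gamma A^{-1} \leqslant \Gamma$. Since a group isomorphism preserves the index of any subgroup in its containing group, the induced bijection between left cosets gives
\[
    [A^{-1}\Gamma A : \Gamma] \;=\; [\theta_A(A^{-1}\Gamma A) : \theta_A(\Gamma)] \;=\; [\Gamma : A\Gamma A^{-1}],
\]
which is the desired identity.

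There is no real obstacle here: once the automorphism $\theta_A$ of $E(n)$ is identified, the rest is a routine statement that isomorphisms preserve subgroup indices. The only minor point to be careful about is that $A$ itself is not an element of $E(n)$ (unless $\lambda = 1$), so one must verify by hand that $\theta_A$ nevertheless sends $E(n)$ into $E(n)$, which is precisely the short computation above using conformality of $A$.
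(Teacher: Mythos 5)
Your proof is correct and follows essentially the same approach as the paper's: both exploit that conjugation by $A$ is an automorphism of $E(n)$ and transport the inclusion and the coset structure across it. The paper simply works the coset bijection out by hand ($\gamma_1(A\Gamma A^{-1}) = \gamma_2(A\Gamma A^{-1})$ iff $A^{-1}\gamma_1 A\,\Gamma = A^{-1}\gamma_2 A\,\Gamma$) rather than invoking the general principle that isomorphisms preserve indices, and it leaves the verification that $\theta_A(E(n))\subset E(n)$ implicit; your explicit check of the latter, keyed to the conformality of $A$, is a reasonable addition.
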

\begin{proof}
	Verifying that $A^{-1} \Gamma A$ is a subgroup of $E(n)$ is trivial. If $\gamma \in \Gamma$, we have $\gamma = A^{-1} (A \gamma A^{-1}) A \in A^{-1} \Gamma A$, and therefore $\Gamma \leqslant A^{-1} \Gamma A$. Finally, if $\gamma_1, \gamma_2 \in \Gamma$, then it is easily seen that $\gamma_1 (A \Gamma A^{-1}) = \gamma_2 (A \Gamma A^{-1})$ if and only if $A^{-1} \gamma_1 A (\Gamma) = A^{-1} \gamma_2 A (\Gamma)$, which in turn shows the desired $[A^{-1} \Gamma A : \Gamma] = [\Gamma : A \Gamma A^{-1}]$.
\end{proof}
\begin{lemma}\label{lem:fixpoint_lemma}
	Let $\Gamma$ be a discrete subgroup of $E(n)$. Then for (Lebesgue) almost every $x \in \R^n$, the only element of $\Gamma$ which fixes $x$ is $\id_{\R^n}$.
\end{lemma}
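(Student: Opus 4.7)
The plan is to write the bad set as a countable union of fixed-point sets of individual nontrivial elements, each of which is a proper affine subspace of $\R^n$, and therefore has Lebesgue measure zero.

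First I would observe that $\Gamma$ is countable: $E(n)$ is second-countable and locally compact Hausdorff, so any discrete subset (and in particular any discrete subgroup) is countable. Therefore, it suffices to show that for each $\gamma \in \Gamma$ with $\gamma \neq \id_{\R^n}$, the fixed-point set
\[
\mathrm{Fix}(\gamma) = \{ x \in \R^n : \gamma(x) = x \}
\]
has Lebesgue measure zero, since the set of $x$ fixed by some nontrivial element of $\Gamma$ is then the countable union $\bigcup_{\gamma \in \Gamma \setminus \{\id\}} \mathrm{Fix}(\gamma)$.

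For the measure-zero claim, I would split into cases. Writing $\gamma(x) = \ort(\gamma)x + \tran(\gamma)$, the equation $\gamma(x) = x$ becomes $(\ort(\gamma) - \id_{\R^n})x = -\tran(\gamma)$. If $\ort(\gamma) = \id_{\R^n}$ and $\gamma \neq \id_{\R^n}$, then $\tran(\gamma) \neq 0$, so $\gamma$ is a nontrivial translation and $\mathrm{Fix}(\gamma) = \emptyset$. If $\ort(\gamma) \neq \id_{\R^n}$, then $\ker(\ort(\gamma) - \id_{\R^n})$ is a proper linear subspace of $\R^n$ (here I use that $\ort(\gamma) \in O(n)$ is diagonalizable over $\C$, so its $1$-eigenspace is proper precisely when $\ort(\gamma) \neq \id_{\R^n}$), and $\mathrm{Fix}(\gamma)$ is either empty or an affine translate of this kernel. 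In either case $\mathrm{Fix}(\gamma)$ is contained in a proper affine subspace of $\R^n$ and thus has Lebesgue measure zero.

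There is no real obstacle here; the only minor points to get right are the countability of $\Gamma$ and the observation that the fixed-point set of an affine isometry is either empty or a proper affine subspace whenever the isometry is not the identity. The result then follows immediately from countable subadditivity of Lebesgue measure.
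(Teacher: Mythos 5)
Your proof is correct and follows essentially the same route as the paper: write the exceptional set as a countable union of fixed-point sets of nontrivial elements, note that each such set is contained in a proper affine subspace (hence null), and conclude by countable subadditivity. Your topological argument for countability of $\Gamma$ is a fine alternative to the paper's appeal to the growth estimate in Lemma~\ref{lem:dimension_growth_interpretation}, but both approaches are standard and the substance of the argument is identical.
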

\begin{proof}
	It is easily seen that for every $\gamma \in E(n)$, the set of fixed points of $\gamma$ is an affine subspace of $\R^n$. If $\gamma \neq \id_{\R^n}$, then this subspace of fixed points is of dimension less than $n$, and therefore of measure zero. The claim now follows since $\Gamma$ is countable, see eg.\ Lemma \ref{lem:dimension_growth_interpretation}.
\end{proof}

We now proceed with the proof of Lemma \ref{lem:multiplicity_lemma}. 

\begin{proof}[Proof of Lemma \ref{lem:multiplicity_lemma}]
	Suppose that $x, x' \in \R^n$. We see that $g(h(x)) = g(h(x'))$ if and only if $h(A(x)) = h(A(x'))$, which in turn by strong automorphicness is true if and only if $x' \in A^{-1}\Gamma A x$. Therefore, the number of elements in $g^{-1} \{g(h(x))\}$ is equal to the number of elements in $h(A^{-1} \Gamma A x)$.
	
	By automorphicness, $h$ maps the image of $x$ under every right coset of $\Gamma$ in $A^{-1} \Gamma A$ to a single point. Therefore, $h(A^{-1} \Gamma A x)$ has at most $[A^{-1} \Gamma A : \Gamma]$ many elements. Since $[A^{-1} \Gamma A : \Gamma] = [\Gamma : A \Gamma A^{-1}]$ by Lemma \ref{lem:index_correspondence}, it follows that the multiplicity of $g$ is at most $[\Gamma : A \Gamma A^{-1}]$.
	
	For the converse estimate, let $x \in \R^n$ be such that $A(x)$ is not a fixed point of any $\gamma \in \Gamma\setminus\{\id_{\R^n}\}$: this holds for almost every $x \in \R^n$ by Lemma \ref{lem:fixpoint_lemma}. Suppose that $h(A^{-1} \gamma_1 A x) = h(A^{-1} \gamma_2 A x)$, where $\gamma_1, \gamma_2 \in \Gamma$. Then by strong automorphicness, $A^{-1} \gamma_1 A x = \gamma A^{-1} \gamma_2 A x$ for some $\gamma \in \Gamma$. 
	
	It follows that $\gamma_1^{-1} (A \gamma A^{-1}) \gamma_2$ is an element of $\Gamma$ which fixes $A(x)$, and therefore $\gamma_1^{-1} (A \gamma A^{-1}) \gamma_2 = \id_{\R^n}$. From this, it follows that $\Gamma A^{-1} \gamma_1 A = \Gamma A^{-1} \gamma_2 A$. Therefore, $h$ maps the image of $x$ under every right coset of $\Gamma$ in $A^{-1} \Gamma A$ to a unique point. In conclusion, there exist points $x \in \R^n$ for which $h(A^{-1} \Gamma A x)$ has $[\Gamma : A \Gamma A^{-1}]$ many points, proving the other required estimate that the multiplicity of $g$ is at least $[\Gamma : A \Gamma A^{-1}]$.
\end{proof}


\section{Proof of Theorem \ref{prop:surjectivity_based_on_cohomology}}\label{sect:extra_proposition}

In this section, we prove Theorem \ref{prop:surjectivity_based_on_cohomology}, which is an application of \cite[Theorem 1.2]{Kangasniemi2017paper} on the question of which closed manifolds admit Latt\`es-type uniformly quasiregular maps. Recall the statement of \cite[Theorem 1.2]{Kangasniemi2017paper} that if $f \colon M \to M$ is a non-constant non-injective uniformly quasiregular map on a closed connected oriented Riemannian manifold $M$, and $M$ is not a rational cohomology sphere, then the Julia set of $f$ has positive measure. 

The essential idea behind Theorem \ref{prop:surjectivity_based_on_cohomology} is as follows: if $g$ is a Latt\`es map induced by $(\Gamma, h, A)$, $A$ is expanding and $V$ is the linear space of Lemma \ref{lem:conjugation_map_linearization}, then the part of the Julia set of $g$ contained in $h(\R^n)$ is also contained in $h(V)$. Hence, if $V$ were of a dimension other than $n$, it would imply that $g$ has a Julia set of zero measure, which is prevented by \cite[Theorem 1.2]{Kangasniemi2017paper}.

We now give the detailed proof, recalling first the statement of the Proposition.

\begin{customthm}{\ref{prop:surjectivity_based_on_cohomology}}
	Let $M$ be a closed, connected, and oriented Riemannian $n$-manifold, and let $(\Gamma, h, A)$ be a Latt\`es triple into $M$. Suppose that $A$ is expanding and $M$ is not a rational cohomology sphere. Then $\dim \Gamma = n$ and $h$ is surjective.
\end{customthm}

\begin{proof}
	Since $A$ is expanding, we may select $(G, V)$ as in Lemma \ref{lem:conjugation_map_linearization}. Since $GV = V$ and $G$ is a finite index subgroup of $\Gamma$, we have $\Gamma V = V_1 \cup \ldots \cup V_k$, where $V_i \subset \R^n$ are affine subspaces of $\R^n$. Moreover, since $(\gamma G \gamma^{-1}, \gamma V)$ is a finite index cocompact translation pair of $\Gamma$ for every $\gamma \in \Gamma$, we see that the spaces $V_i$ are parallel to $V$. We denote $V' = \Gamma V$, and note that $\Gamma V' = V'$.
	
	Suppose towards contradiction that $\dim \Gamma < n$. It is enough to show that $h(\R^n \setminus V)$ is contained in the Fatou set $\Fatou_g$ of $g$. Indeed, assuming that this is shown, we know that the Julia set $\Julia_g$ of $g$ is contained within $h(V) \cup (M \setminus h(\R^n))$. Since $h$ is quasiregular, $h(V)$ has Lebesgue measure zero, see e.g.\ Rickman \cite[I.4.14]{Rickman1993book}. Therefore, since $M \setminus h(\R^n)$ is finite, $\Julia_g$ has Lebesgue measure zero. Since $g$ is non-injective by Lemma \ref{lem:extension_and_non-injectivity}, this contradicts \cite[Theorem 1.2]{Kangasniemi2017paper}.
	
	Hence, let $x \in \R^n \setminus V$. We show that $h(x) \in \Fatou_g$. Let $\lambda > 1$ be the expansion factor of $A$, let $l = d(x, V)$, and let $r > 0$ be such that $r < l$. Since $A$ is a linear conformal map and $AV = V$, we have for every $m \in \Z_+$ that $d(A^m B^n(x, r), V) = \lambda^m(l-r)$. Hence, there exists $m_0 \in \Z_+$ for which $d(A^m B^n(x, r), V) > 1 + \sup_i d(V_i, V)$ whenever $m \geq m_0$. It follows that $A^m B^n(x,r) \cap B^n(V', 1) = \emptyset$ for all $m \geq m_0$.
	
	We denote $U_x = h(B^n(x, r))$ and $U_V = h(B^n(V', 1))$, and note that $U_x$ and $U_V$ are open. Now, suppose that $g^{m}(U_x) \cap U_V \neq \emptyset$ for some $m \geq m_0$. Since $g \circ h = h \circ A$, we obtain that $h(A^{m}B^n(x, r)) \cap h(B^n(V', 1)) \neq \emptyset$. By the strong automorphicness of $h$, we have $A^{m}B^n(x, r) \cap \Gamma B^n(V', 1) \neq \emptyset$. However, since $\Gamma V' = V'$ and $\Gamma$ is a group of isometries, it follows that $\Gamma B^n(V', 1) = B^n(V', 1)$. Thus, $A^{m}B^n(x, r) \cap B^n(V', 1) \neq \emptyset$, which is a contradiction. Hence, we obtain that $g^{m}(U_x) \cap U_V = \emptyset$ for all $m \geq m_0$. 
	
	Therefore, the family $\left\{g^{m} \vert U_x : m \geq m_0 \right\}$ consists of $K$-quasiregular mappings which omit $U_V$, which implies that the family is normal; see e.g.\ \cite[Proposition 6.1]{OkuyamaPankka2014paper}. Since $U_x$ is a neghborhood of $h(x)$, we obtain the desired result $h(x) \in \Fatou_f$, concluding the proof.
\end{proof}


\section{Limits of automorphic quasiregular maps}\label{sect:limit_lemma}

The rest of this paper is dedicated to the proof of Theorem \ref{thm:generalized_martio_periodicity_result}. In this section, we generalize a lemma of Martio \cite[Lemma 3.1]{Martio1975paper_kperiod} to the automorphic case. We obtain the remaining part of Theorem \ref{prop:extension_result} on the number of omitted points as a byproduct. 

For the purposes of the following discussion, let $M$ be a closed Riemannian manifold, let $\Gamma \leqslant E(n)$ be discrete, let $f \colon \R^n \to M$ be automorphic with respect to $\Gamma$, and let $(G,V)$ be a finite index cocompact translation pair of $\Gamma$. 

We begin by discussing coordinate changes in our setting. Given an isometry $L \in E(n)$, we define a coordinate change by $L$ for $\Gamma$, $G$, $V$ and $f$ by setting $\Gamma' = L \Gamma L^{-1}$, $G' = L G L^{-1}$, $V' = LV$ and $f' = f \circ L^{-1}$. The resulting $\Gamma'$ is a discrete subgroup of $E(n)$, the map $f'$ is automorphic with respect to $\Gamma'$, and $(G', V')$ is a finite index cocompact translation pair of $\Gamma'$. Many other properties of $\Gamma$, $f$, $G$ and $V$ are also preserved, such as the dimensions of $\Gamma$, $G$ and $V$, and whether $G$ is a normal abelian subgroup of $\Gamma$. We note, however, that an extra assumption of $L$ preserving orientation is required in order to preserve quasiregularity of $f$ in the coordinate change.

Next, we generalize the concept of a period strip to our setting. We consider first the standard definition for a $k$-periodic quasiregular map $f \colon \R^n \to M$. Recall that an element $v \in \R^n\setminus \{0\}$ is a \emph{period} of $f$ if $f(x+v) = f(x)$ for every $x \in \R^n$, and $f$ is \emph{$k$-periodic} if the periods of $f$ span a $k$-dimensional subspace. If $f$ is $k$-periodic, then there exists a free generating set $\{v_1, \ldots, v_k\}$ for the periods of $f$, since the discreteness of $f$ implies that the periods of $f$ along with 0 form a discrete subgroup of $\R^n$. Let $W$ be the linear space orthogonal to all $v_i$. Then a set $F$ of the form
\begin{equation}\label{eq:period_strip}
	F = x + [0,1)v_1 + \cdots + [0,1)v_k + W,
\end{equation}
where $x \in \R^n$, is called a \emph{period strip} of $f$.

Consider now the automorphic case. Given a finite index cocompact translation pair $(G, V)$ of a discrete $\Gamma \leqslant E(n)$, the set of translation vectors $\tran_V(G) = \left\{ \tran_V(\gamma) : \gamma \in G \right\}$ is a discrete subgroup of $\R^n$ and spans a linear copy of the affine space $V$. Hence, similarly to above, there exists a free generating set $\{v_1, \ldots, v_k\}$ of $\tran_{V}(G)$. We call a set $F$ a \emph{twisted period strip of $f$ with respect to $(G, V)$} if it is of the form given in \eqref{eq:period_strip} where $x \in \R^n$ and $W$ is the linear space orthogonal to $V$. Moreover, we say that $F$ is a \emph{twisted period strip of $f$} if it is such with respect to some finite index cocompact translation pair of $\Gamma$.

Heuristically, a twisted period strip $F$ of $f$ with respect to $(G, V)$ acts like a regular period strip of $f\vert_V$, but outside $V$ the periods may be twisted around $V$ in ways similar to the group of screw-motions in Section \ref{subsect:translation_subgroup}. We also note that in a coordinate change by a $L \in E(n)$, we may map a twisted period strip $F$ to $F' = LF$, and this is a twisted period strip of $f' = f \circ L^{-1}$ with respect to the finite index cocompact translation pair $(G', V') = (L G L^{-1}, LV)$ of the group $\Gamma' = L \Gamma L^{-1}$.

The twisted period strip essentially acts in our arguments as a fundamental cell replacement which is geometrically simpler and closer to the definition used in the corresponding proofs for periodic functions. The following lemma lets us convert results for twisted periodic strips to corresponding ones for fundamental cells.

\begin{lemma}\label{lem:cover_by_fcells}
	Let $M$ be a closed oriented Riemannian $n$-manifold, and let $f \colon \R^n \to M$ be automorphic under a discrete $\Gamma \subset E(n)$. Let $F$ be a twisted period strip of $f$. Then $F$ can be covered by finitely many fundamental cells of $\Gamma$.
\end{lemma}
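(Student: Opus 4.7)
The plan is to construct a single fundamental cell $D$ of $\Gamma$ whose projection onto the linear space $V'$ parallel to $V$ is bounded, and then argue that only finitely many $\Gamma$-translates of $D$ can meet $F$. Writing $\R^n = V'\oplus W$ with $W=(V')^\perp$, the twisted period strip has the form $F = (x_V+P)+W$, with $x_V+P$ bounded in $V'$ and $F$ filling all of $W$. Consequently, once $D$ has bounded $V'$-projection, the condition $\gamma D\cap F\neq\emptyset$ reduces to an overlap of two bounded sets in $V'$, satisfied by only finitely many $\gamma$.

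The first step is to organise the action of $\Gamma$ on $V'$. Since $(\gamma G\gamma^{-1},\gamma V)$ is a finite index cocompact translation pair of $\Gamma$ for every $\gamma\in\Gamma$, Lemma~\ref{lem:dimension_uniqueness} gives that $\gamma V$ is parallel to $V$, so $\ort(\gamma)$ preserves both $V'$ and $W$. Then the formula $\gamma_{V'}(y_V)=\ort(\gamma)|_{V'}y_V+\tau_V(\gamma)$, where $\tau_V(\gamma)$ denotes the $V'$-component of $\tran(\gamma)$, defines a homomorphism $\Gamma\to E(V')$ with image $\Gamma_{V'}$ and kernel $H$. The image contains the cocompact lattice $\tran_V(G)$, and its orthogonal parts preserve that lattice and hence lie in its finite point group, so $\Gamma_{V'}$ is a crystallographic group on $V'$ admitting a compact connected fundamental domain $P'$. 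The key sub-step is that $H$ is finite: one has $H\cap G=G_0$, which is finite by the proof of Lemma~\ref{lem:dimension_growth_interpretation}, and in each remaining coset $\gamma_0 G$ the condition $\gamma_0 g\in H$ forces $\ort(\gamma_0)|_{V'}=I$ and then pins $\tran_V(g)$ to the single value $-\tau_V(\gamma_0)$, contributing at most $|G_0|$ extra elements. Hence $|H|\leq[\Gamma:G]\cdot|G_0|<\infty$.

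With $H$ finite, its faithful image in $E(W)$ is a finite group of isometries of $W$, and a Dirichlet fundamental domain $D^W\subset W$ centred at a generic point is connected with measure-zero boundary. Set $D:=P'\times D^W$. I would then verify that $D$ is a fundamental cell of $\Gamma$: given $y\in\R^n$, the unique $\gamma_{V'}\in\Gamma_{V'}$ sending $y_V$ into $P'$ determines $\gamma$ up to $H$, after which the unique $h\in H$ sending the resulting $W$-component into $D^W$ specifies $\gamma$. The $V'$-projection of $\gamma D$ equals $\gamma_{V'}(P')$, so $\gamma D\cap F\neq\emptyset$ forces $\gamma_{V'}(P')\cap(x_V+P)\neq\emptyset$; by discreteness and properness of $\Gamma_{V'}$ acting on $V'$, this holds for only finitely many $\gamma_{V'}$, and each has exactly $|H|$ preimages in $\Gamma$. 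Hence $S:=\{\gamma\in\Gamma:\gamma D\cap F\neq\emptyset\}$ is finite, and since the $\Gamma$-translates of $D$ tile $\R^n$, every $y\in F$ lies in some $\gamma D$ with $\gamma\in S$, giving $F\subset\bigcup_{\gamma\in S}\gamma D$. The main obstacle is the finiteness of $H$; a direct appeal to properness of the $\Gamma$-action on $\R^n$ is awkward since $H$'s natural action is only on $W$ and need not be cocompact, so the coset-by-coset count above is the cleanest route, leaning on the already-known finiteness of $G_0$.
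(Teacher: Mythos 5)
Your proof is correct, but it takes a genuinely different route from the paper's, and in one respect it is actually closer to the literal statement. The paper first invokes the reduction ``since $G$ is of finite index in $\Gamma$, it suffices to cover $F$ by finitely many fundamental cells of $G$,'' changes coordinates so that $V$ is linear, and then observes that $G_0 = \{\gamma \in G : \tran(\gamma) = 0\}$ is a finite group acting on $F$ for which a fundamental cell of $G_0 \curvearrowright F$ is already a fundamental cell of $G$; hence $F$ is partitioned into $\abs{G_0}$ fundamental cells of $G$. This is short because within $G$ the orthogonal parts fix $V$ (Lemma~\ref{lem:identity_restriction}), so the translational and rotational parts decouple cleanly. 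What the paper does not spell out is how a covering by fundamental cells of the finite-index subgroup $G$ yields a covering by fundamental cells of $\Gamma$ itself; what is really used downstream is the multiplicity bound $N(f,D_G) \leq [\Gamma:G]\,N(f,D_\Gamma)$, which makes the $G$-cell covering sufficient for the application even though it does not literally produce $\Gamma$-cells.

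Your approach avoids the reduction to $G$ entirely. You work with the whole group $\Gamma$, observing (via Lemma~\ref{lem:dimension_uniqueness} applied to the pairs $(\gamma G\gamma^{-1},\gamma V)$) that every $\ort(\gamma)$ preserves the splitting $\R^n = V' \oplus W$, which gives a homomorphism $\Gamma \to E(V')$. Its image $\Gamma_{V'}$ is crystallographic (it contains the cocompact lattice $\tran_V(G)$ with finite index, hence is discrete and cocompact), and its kernel $H$ is finite, which you bound coset-by-coset using $H \cap G = G_0$; your $H$ plays the role of the paper's $G_0$ but is the $\Gamma$-level object rather than the $G$-level one. Building a fundamental cell of $\Gamma$ as a product $P' \times D^W$ and counting the translates that meet $F$ then gives the literal covering assertion. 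The extra price is having to handle the nontrivial point group of $\Gamma_{V'}$ and to verify that $H$ acts by a fixed finite isometry group on each $W$-fiber and that this action is normalized by $\Gamma_W$, none of which appears in the paper's proof because there $G$ already has trivial action on $V'$. So your argument is longer and has more moving parts, but it produces the covering by $\Gamma$-cells directly and avoids the implicit finite-index reduction.

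One small bookkeeping remark: as with any fundamental-cell construction, the product $P' \times D^W$ must be adjusted on boundaries and near points with nontrivial $\Gamma_{V'}$- or $H$-stabilizer to get \emph{exactly} one representative per orbit while keeping the cell connected and with measure-zero boundary; the paper's proof has the same technicality, and it does not affect the substance of either argument.
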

\begin{proof}
	Let $(G, V)$ be the corresponding finite index cocompact translation pair of the twisted period strip $F$. Since $G$ is of finite index in $\Gamma$, it suffices to cover $F$ by finitely many fundamental cells of $G$. Moreover, we may assume that $V$ is linear by a coordinate transformation, in which case $\tran_V(\gamma) = \tran(\gamma)$ for all $\gamma \in G$.
	
	Let $G_0 = \{\gamma \in G : T(\gamma) = 0\} \leqslant G$. Then $G_0$ is finite, since otherwise we could use Lemma \ref{lem:basic_prop_of_O(n)} to find an accumulation point of the discrete group $G$. Every element of $G_0$ is identity on $V$, and therefore by Lemma \ref{lem:distance_preserving_fact}, $G_0$ acts on $F$. Moreover, if $\gamma, \gamma' \in G$ with $\tran(\gamma) = \tran(\gamma')$, we have $\gamma' \circ \gamma^{-1} \in G_0$. Therefore, a fundamental cell of the action of $G_0$ on $F$ is also a fundamental cell of $G$. Since $G_0$ is finite, $F$ partitions to finitely many fundamental cells under $G_0$, and the claim follows.
\end{proof}

With the necessary terminology defined, we first recall the original statements of Martio in \cite[Lemma 3.1]{Martio1975paper_kperiod} and by Martio and Srebro in \cite[Theorem 8.3]{MartioSrebro1975paper1}, which we afterwards adapt to our situation. In what follows, the case $0 < k < n-1$ is by Martio and the case $k = n-1$ by Martio--Srebro.

\begin{lemma}[{\cite[Lemma 3.1]{Martio1975paper_kperiod} and \cite[Theorem 8.3]{MartioSrebro1975paper1}}]\label{lem:period_strip_limit_original}
	Let $f \colon \R^n \to \R^n$ be $K$-quasiregular. Suppose that $f$ is $k$-periodic for some $0 < k < n$, and that $f$ has finite multiplicity in a period strip $F$.
	\begin{itemize}
		\item If $k < n-1$, then
		\[
			\lim_{\substack{x \in F\\x \to \infty}} f(x) = \infty.
		\]
		
		\item If $k = n-1$, then there exist $a, a' \in \R^n \cup \{\infty\}$ for which
		\begin{align*}
			\lim_{\substack{x \in F\\x \to +\infty}} f(x) = a,
			&& \lim_{\substack{x \in F\\x \to -\infty}} f(x) = a',
		\end{align*}
		where the limits $+\infty$ and $-\infty$ are defined in terms of an identification $F \cong \R \times D$ where $D$ is bounded. Moreover, either $a = \infty$ or $a' = \infty$.
	\end{itemize}
\end{lemma}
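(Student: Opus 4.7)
The proof follows Martio's approach, arguing by contradiction via a combination of the periodicity structure, finite multiplicity, and capacity estimates for quasiregular maps. For the case $0 < k < n-1$, suppose toward a contradiction that there is a sequence $(x_j) \subset F$ with $x_j \to \infty$ in $F$ and $f(x_j) \to y_0 \in \R^n$. Since $F \cong [0,1)^k \times W$ where $W$ is the $(n-k)$-dimensional linear subspace orthogonal to the periods, the assumption $x_j \to \infty$ in $F$ forces the $W$-component of $x_j$ to tend to infinity.

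The first step is to pass to the quotient: letting $L$ be the period lattice generated by $v_1, \ldots, v_k$, the map $f$ descends to a $K$-quasiregular $\bar f \colon \T^k \times W \to \R^n$ under the flat product metric, whose multiplicity equals that of $f$ in $F$ and is therefore finite, say at most $N$. The second step is a normal family argument: translating in the $W$-direction to bring $x_j$ to a fixed basepoint, one obtains a sequence $\bar f_j \colon \T^k \times W \to \R^n$, each $K$-quasiregular with multiplicity at most $N$. By the normal family theorem for quasiregular mappings of uniformly bounded multiplicity (a Rickman-type variant of Miniowitz's theorem), a subsequence converges locally uniformly in the spherical metric on $\R^n \cup \{\infty\}$ to a quasiregular map $\bar f_\infty$. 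Since each $\bar f_j$ takes a value close to $y_0$ at a point close to the basepoint, $\bar f_\infty$ is non-constant and attains the finite value $y_0$; in particular, on a neighborhood of that point it is a genuine quasiregular map into $\R^n$.

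The third step derives a contradiction using the source geometry. Consider an annular condenser $C_R = (\T^k \times B_W(0, 2R), \T^k \times \overline{B_W(0, R)})$ in the source. Its $n$-capacity is bounded below by a constant multiple of the capacity of the Euclidean ring $(B_W(0, 2R), \overline{B_W(0, R)})$, which diverges with $R$ precisely because $\dim W = n - k \geq 2$. On the other hand, Rickman's $K_O$-inequality and the bound $N$ on multiplicity control the source capacity from above by $NK \operatorname{cap}(\bar f_\infty(C_R))$, and for $R$ large the image of $C_R$ remains within a bounded region of $\R^n$, so the target capacity is bounded. Letting $R \to \infty$ yields the contradiction. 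For the case $k = n-1$, the space $\T^{n-1} \times \R$ has two ends, and applying the normal family argument separately at each end produces the two limits $a, a' \in \R^n \cup \{\infty\}$; uniqueness of each limit follows from a connectivity argument based on normal family stability, and the assertion that at least one of $a, a'$ equals $\infty$ is the same capacity obstruction applied to a two-sided cylinder $\T^{n-1} \times [-R, R]$, whose capacity also diverges as $R \to \infty$.

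The main obstacle is Step 2: producing a non-degenerate limit $\bar f_\infty$ requires invoking the normal family theorem carefully enough to preserve the non-trivial finite value $y_0$, rather than collapsing to the constant map $\infty$. One must also verify in Step 3 that the capacity comparison precisely needs $n - k \geq 2$ to force divergence, and not merely $n - k \geq 1$. The latter is why the case $k = n - 1$ is genuinely weaker and handled separately: with only one end of $W = \R$, the one-sided capacity does not diverge, which is exactly what allows $a$ or $a'$ to lie in $\R^n$.
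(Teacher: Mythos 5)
Your proposal is a proof by contradiction via capacity estimates; this is a genuinely different route from Martio's (and the paper's) direct rescaling argument, but as written it contains two gaps that break it.

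First, your rescaling step points in the wrong direction. You assume $f(x_j) \to y_0 \in \R^n$, renormalize, and assert that the locally uniform limit $\bar f_\infty$ is \emph{non-constant} because each $\bar f_j$ attains a value near $y_0$ near the basepoint. That inference is a non sequitur: the $\bar f_j$ could converge locally uniformly to the constant map $y_0$, which is entirely consistent with each $\bar f_j$ taking values near $y_0$ near the basepoint. In fact, the crucial step in Martio's proof (and in the automorphic version given in the paper's Lemma~\ref{lem:period_strip_limit}) is to show that the rescaled limit \emph{is} constant: periodicity produces, in the scaled picture, infinitely many preimages of a single value accumulating at $0$, which forces the limit to be non-discrete and hence constant; the whole direct proof then turns on that constancy. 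Moreover, ``bounded multiplicity'' is not by itself a normality criterion for quasiregular maps; normality in the actual proof comes from the rescaled maps omitting a fixed open set $U$ in the target, and producing that omitted set is itself a nontrivial step (the paper's Step~(1), using finite multiplicity) that your argument does not carry out.

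Second, the capacity claim is computationally wrong. For the condenser $\left(\T^k \times B_W(0, 2R),\ \T^k \times \overline{B_W(0, R)}\right)$ with $\dim W = n-k$, a radial test function that is $1$ on $\overline{B_W(0,R)}$ and decreases linearly to $0$ across the annulus gives $\mathrm{cap}_n \leq C\, R^{-n}\, R^{n-k} = C\, R^{-k}$; one can check that the $n$-capacity in fact scales like $R^{-k}$, and so it tends to $0$ for every $k \geq 1$, not to infinity. Hence the left-hand side of the inequality you invoke does not diverge, and the contradiction never materializes. The true reason for the split at $k = n-1$ is topological, not capacitary: when $\dim W \geq 2$ the annular region $Q$ around $V$ intersected with the period strip is connected, forcing a single limit, whereas when $\dim W = 1$ that region has two components (two ends), which is exactly what allows two distinct limits $a, a'$.
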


A version of this Lemma for automorphic quasiregular maps into closed manifolds reads as follows.

\begin{lemma}\label{lem:period_strip_limit}
	Let $M$ be a closed oriented Riemannian $n$-manifold, and let $f \colon \R^n \to M$ be $K$-quasiregular. Suppose that $f$ is automorphic under a discrete isometry group $\Gamma \subset E(n)$ of dimension $\dim \Gamma < n$, and $f$ has finite multiplicity in a twisted period strip $F$ with respect to a finite index cocompact translation pair $(G,V)$.
	
	\begin{itemize}
		\item If $\dim \Gamma < n-1$, then there exists $a \in M$ for which
		\[
			\lim_{\substack{x \in F\\x \to \infty}} f(x) = a.
		\]
		\item If $\dim \Gamma = n-1$, then there exist $a, a' \in M$ for which
		\begin{align*}
			\lim_{\substack{x \in F\\x \to +\infty}} f(x) = a,
			&& \lim_{\substack{x \in F\\x \to -\infty}} f(x) = a',
		\end{align*}
		where the limits $+\infty$ and $-\infty$ are defined in terms of an identification $F \cong \R \times D$ where $D$ is bounded.
	\end{itemize}
\end{lemma}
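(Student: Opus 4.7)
My plan is to adapt the arguments of Martio \cite[Lemma 3.1]{Martio1975paper_kperiod} and Martio--Srebro \cite[Theorem 8.3]{MartioSrebro1975paper1} to the present setting. The two modifications required are (i) passing to a bilipschitz chart in $M$ near any candidate limit point, in order to reduce locally to a quasiregular map into $\R^n$, and (ii) replacing the pure translational periodicity by the twisted isometric action of $G$ on $\R^n$. Since $G$ acts by translations on $V$ and by Euclidean isometries on all of $\R^n$, all geometric quantities used in the original proofs (diameters, volumes, disjointness of pushed balls) are preserved, and Martio's estimates carry over without essential change.

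The first step is to set up the accumulation sets. By compactness of $M$, the set
\[
	A := \bigcap_{R > 0} \overline{f\bigl(F \setminus \overline{B^n(0, R)}\bigr)} \subset M
\]
is non-empty and compact. When $\dim \Gamma < n-1$, the linear space $W$ orthogonal to $V$ has dimension at least $2$, so $F \setminus \overline{B^n(0, R)}$ is connected for $R$ sufficiently large, and $A$ is therefore a nested intersection of connected compact sets, hence connected. In the case $\dim \Gamma = n-1$, $W$ is one-dimensional and $F$ has two ends along $W$, leading to two analogous connected accumulation sets $A_+$ and $A_-$; unlike in the classical Martio--Srebro setting, the compactness of $M$ removes any requirement that one of these sets consist of a specific point like $\infty$.

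To show that $A$ (or each of $A_\pm$) is a single point, one supposes for contradiction that distinct points $a, a' \in A$ exist and picks disjoint open neighborhoods $U_a \ni a$ and $U_{a'} \ni a'$ with bilipschitz coordinate charts into $\R^n$. After composing with such a chart near $a$, the question reduces locally to the classical setting of quasiregular maps into $\R^n$. Martio's spreading construction (from the proof of \cite[Lemma 3.1]{Martio1975paper_kperiod}) now applies: using the openness of quasiregular maps together with the $G$-action, one produces an infinite sequence of pairwise disjoint balls in $F$ whose $f$-images all contain a common fixed small ball in $M$, contradicting the finite multiplicity hypothesis and giving the desired contradiction. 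The main obstacle is carrying out this spreading while accounting for the orthogonal components $\ort(\gamma) \in O(n)$ of the isometries $\gamma \in G$, which may rotate $W$ around $V$; this is resolved by the observation that these components still act as Euclidean isometries and therefore preserve the metric quantities on which Martio's counting argument depends, so all the original estimates carry through in the twisted setting essentially unchanged.
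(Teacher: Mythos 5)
Your approach is genuinely different from the paper's. The paper follows Martio's proof very closely: (1) finds an open set $U\subset M$ and a ball $B = B(V,r)$ with $f^{-1}U \subset B$; (2) considers the rescaled maps $f_m(x)=f(mx)$ on an annulus $Q$ around $V$, which form a normal family since they omit $U$, and extracts a subsequential limit $f_0$; (3) shows $f_0$ is constant $= a$; (4) upgrades subsequential convergence to the stated limit along $F$. You instead define the accumulation set $A$ directly as a nested intersection, observe it is connected, and attempt to derive a contradiction from $\#A > 1$ via a ``spreading construction.'' These are structurally distinct strategies, and the second one has a real gap as written.

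The gap is in the claim that ``using the openness of quasiregular maps together with the $G$-action, one produces an infinite sequence of pairwise disjoint balls in $F$ whose $f$-images all contain a common fixed small ball in $M$.'' Openness only gives $f(B(x_j,r_j))\supset B_M(f(x_j),\rho_j)$ for \emph{some} $\rho_j>0$, with no lower bound on $\rho_j$ as $|x_j|\to\infty$; without a uniform modulus-of-openness estimate you cannot conclude a common fixed target ball. And the $G$-action cannot supply the needed disjoint balls inside $F$ either, since $G$-translates of a ball in $F$ lie in other copies of the twisted period strip, not in $F$ itself. This uniformity is exactly what the normal family step in Martio's argument (and the paper's Step 2) provides, and you have omitted it entirely. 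Relatedly, you correctly flag that the orthogonal parts $\ort(\gamma)$ may rotate $W$ around $V$, but you declare this harmless because isometries preserve metric quantities. That observation is not sufficient on its own: the paper's actual fix (the ``small adjustment'' in Step 3 of its proof) is to replace the carved-out ball $\overline{B}(x_0,2r)\setminus B(x_0,r)$ in Martio's constancy argument by a compact \emph{annular region around $V$}, so that the rotated-and-translated comparison points still land in a fixed compact set. Your sketch never reaches the step where this difficulty arises because it bypasses the normal family construction, so it cannot address it. To repair your argument you would essentially need to reinsert Steps 1--3 of the paper's proof; the accumulation-set framing in your first paragraph is a reasonable reformulation of Step 4, but it cannot stand alone.
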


It was already pointed out by Martio in \cite[Section 5.3]{Martio1975paper_kperiod} that the ideas of Lemma \ref{lem:period_strip_limit_original} also work for $f \colon \R^n \to \S^n$, although instead of the limit being $\infty$ it is just some point $a \in \S^n$. In fact, changing the target into a closed oriented Riemannian $n$-manifold causes no significant change in the proofs. Due to Lemma \ref{lem:translation_subgroup_special_case}, case $\dim \Gamma = n-1$ is in fact a direct consequence of the proof of Martio and Srebro. 

The proof of case $0 < \dim \Gamma < n-1$ follows along Martio's proof of \cite[Lemma 3.1]{Martio1975paper_kperiod}, where we only need to make one small adjustment. We however provide the full proof for clarity instead of merely pointing out this difference. We remark that we have included the case $\dim \Gamma = 0$ in the lemma. This is a basic extension result for quasiregular maps of finite multiplicity to an isolated singularity, which should be familiar to experts. Nevertheless, we present its proof here for convenience, as the proof uses some of the same methods as case $0 < \dim \Gamma < n-1$ and is therefore short to present along it.

\begin{proof}[Proof of Lemma \ref{lem:period_strip_limit}]
	We may assume that $V$ is linear by an orientation preserving isometric coordinate change. Let $W = V^\perp$, and let $N = N(f, F)$ be the multiplicity of $f$ in $F$, which was assumed to be finite. We may also assume that the interior of $F$ contains $W$.
	
	We begin by presenting the proof of case $0 < \dim \Gamma < n-1$ in detail. The proof follows that of \cite[Lemma 3.1]{Martio1975paper_kperiod} and is done in the following steps.
	\begin{enumerate}
		\item \label{enum:Martio_step_1} We find an open set $U \subset M$ and a ball $B = B(V, r)$ around $V$ for which $f^{-1} U \subset B$.
		\item \label{enum:Martio_step_2} We use the above $B$ and $U$ to find an annulus $Q$ around $V$ on which the sequence of functions $f_m \colon x \mapsto f(mx)$ has a subsequence converging to some $f_0 \colon Q \to M$.
		\item \label{enum:Martio_step_3} We show that $f_0$ is constant, and let $a$ be its constant value.
		\item \label{enum:Martio_step_4} We prove that the limit of $f$ along $F$ is $a$.
	\end{enumerate}
	The small adjustment compared to the original proof is in step \eqref{enum:Martio_step_3}.
	
	\emph{Step \eqref{enum:Martio_step_1}:} There exists $y \in f(F)$ for which $f^{-1}\{y\} \cap F = \{x_1, \ldots, x_N\}$. We may select bounded normal neighborhoods $U_i'$ of $x_i$, $i \in \{1, \ldots, N\}$, for which $G U_i' \cap U_j' = \emptyset$ when $i \neq j$. Denote $U' = \cup_i U_i'$ and $U = \cap_i fU_i'$.
	
	Now, select $B = B(V, r)$ large enough that $U' \subset B$. We now show that $f^{-1} U \subset B$. Suppose towards contradiction that there exists $x' \notin B$ for which $f(x') \in U$. Then we may first assume $x' \in F$, and then by normality of $U_i'$ we select $x_1', \ldots, x_N' \in GU' \cap F$ for which $f(x_i') = f(x')$. Since $x_i' \in B$ for all $i \in \{1, \ldots, N\}$, we have $x' \neq x_i'$ for all $i \in \{1, \ldots, N\}$. This contradicts the fact that $f$ has a multiplicity of $N$ in $F$.
	
	\emph{Step \eqref{enum:Martio_step_2}:} Let $Q = (3B \setminus 2\overline{B}) \cap \intr F$. Since $\dim V < n-1$, $Q$ is connected. Now, if $f_m \colon Q \to M$ is given by $f_m(x) = f(mx)$, the family $\{f_m\}$ is normal since it consists of $K$-quasiregular maps which omit $U$: this is given e.g.\ in \cite[Proposition 6.1]{OkuyamaPankka2014paper} when the domain is a ball, and we may apply it for $Q$ by covering it with finitely many balls that do not meet $B$. Hence, there exists a subsequence $f_{m_j}$ which converges locally uniformly to a quasiregular map $f_0 \colon Q \to M$. 
	
	\emph{Step \eqref{enum:Martio_step_3}:} We now wish to show that $f_0$ is a constant map. For this, fix a point $x_0 \in W \cap Q$. We consider a compact annulus $A$ of the form $(\overline{A_V} \times W) \cap ((8/3)\overline{B} \setminus (7/3)B)$, where $A_V \subset V$ is an open subset of $V$ for which $0 \notin U_V$. We note that for sufficiently large $m$, the affinely scaled set $mA_V$ must necessarily be larger than $F \cap V$. Hence, by Lemma \ref{lem:distance_preserving_fact}, we find for sufficiently large $j$ points $x_j \in A$ for which $f(m_j x_j) = f(m_j x_0)$. By compactness of $A$, there exists $x_0' \in A$ for which $f_0(x_0') = f_0(x_0)$.
	
	We may now repeat the above construction of $x_0'$ for infinitely many annuli $A$, where the sets $A_V$ are disjoint and accumulate to $0$. By this procedure, we can obtain an accumulation point for $f_0^{-1}\{f_0(x_0)\}$. Hence, $f_0$ cannot be discrete, and therefore it is constant. Let $a$ denote the constant value of $f_0$.
	
	We remark here that the use of an annular region $A$ above is the small adjustment compared to the proof of the periodic case in \cite[Lemma 3.1]{Martio1975paper_kperiod}. The original proof uses a carved-out ball $\overline{B}(x_0, 2r) \setminus B(x_0, r)$ around $x_0$ as the set $A$. However, in our case the points $x_j$ may also be rotated around $V$ instead of merely translated parallel to it, and hence we need $A$ to be an annulus around $V$ in order to replicate the same argument.
	
	\emph{Step \eqref{enum:Martio_step_4}:} It remains to prove that the desired limit holds. Fix a small open ball $B_a \subset M$ around $a$, which is small enough that $U$ is not a subset of $B_a$. Let $E = (5/2)B$ and $F_j = m_{j+1}E \setminus m_j E$. By local uniform convergence of $f_m$, there exists $j_0$ for which $f(\partial (m_{j}E)) \subset B_a$ when $j \geq j_0$. It suffices to show that $f(F_j) \subset B_a$ for all $j \geq j_0$.
	
	Assume to the contrary that $y \in f(F_j) \setminus B_a$. Since $f(F_j)$ omits $U$ which is not contained in $B_a$, there must be infinitely many boundary points of $f(F_j)$ outside $B_a$. Since by Holopainen--Rickman \cite{HolopainenRickman1992paper} the set $M \setminus f(\R^n)$ is finite, we may fix such a boundary point $y'$ for which $y' \in f(\R^n)$.
	
	Now we may select a sequence $(x_i')$ in $F \cap F_j$ so that $f(x_i') \to y'$. Since $F \cap F_j$ is bounded, we find by passing to a subsequence a limit point $x' \in \overline{F_j}$ which $f$ maps to $y'$. Since $y' \in \partial f(F_j)$ and $f$ is open, we obtain that $x' \in \partial F_j = \partial (m_{j+1}E) \cup \partial (m_{j}E)$. Hence, $y' \in f(\partial F_j) \subset B_a$, which is a contradiction, concluding the proof of case $0 < \dim \Gamma < n-1$.
	
	\emph{Other cases:} The proof of case $\dim \Gamma = n-1$ is very similar to the above. Only in this case, $Q$ has two components, and therefore $f_0$ can have two different limit points at opposite directions. Therefore, we leave the details to the interested reader, referring to Martio--Srebro \cite{MartioSrebro1975paper1} for the original proof for periodic functions.
	
	Finally, we quickly prove the case $\dim \Gamma = 0$. We complete step \eqref{enum:Martio_step_1} as in the above proof, where this time $V = \{0\}$ and $W = F = \R^n$. Hence, the resulting $B$ is a ball at origin. We consider $\R^n$ as a subset of $\S^n$ by means of the standard conformal projection, and let $Q = \S^n \setminus \overline{B}$. Then $Q$ is a neighborhood of $\infty$ for which $f(Q \setminus \{\infty\})$ omits the open set $U'$. Hence, $f$ extends quasiregularly to the isolated singularity $\infty \in \overline{\R^n}$ by a standard extension result of quasiregular mappings; see e.g.\ \cite[Theorem 2.6]{OkuyamaPankka2013paper} for a formulation where the target is a manifold.
\end{proof}

We immediately obtain a restriction on the size of $M \setminus f(\R^n)$ as a corollary of Lemma \ref{lem:period_strip_limit}, in the same manner as in Martio--Srebro \cite[Theorem 8.2]{MartioSrebro1975paper1}.

\begin{cor}\label{cor:image_bound}
	Let $M$ be a closed oriented Riemannian $n$-manifold, and let $f \colon \R^n \to M$ be $K$-quasiregular. Suppose that $f$ is automorphic with respect to a discrete isometry group $\Gamma \subset E(n)$ of dimension $\dim \Gamma < n$, and that $f$ has finite multiplicity in a twisted period strip $F$. Then $M \setminus f(\R^n)$ contains at most two points. Moreover, if $\dim \Gamma \neq n-1$, $M \setminus f(\R^n)$ contains at most one point.
\end{cor}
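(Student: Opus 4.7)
The plan is to adapt the argument of Martio--Srebro \cite[Theorem 8.2]{MartioSrebro1975paper1} to our setting, with Lemma \ref{lem:period_strip_limit} as the main input. The first step will be to reduce the question to the behavior of $f$ on $F$ by observing that $f(\R^n) = f(F) = f(\bar F)$, where $\bar F$ denotes the closure of $F$ in $\R^n$. Indeed, the proof of Lemma \ref{lem:cover_by_fcells} shows that $F$ decomposes into finitely many fundamental cells of the finite index subgroup $G \leqslant \Gamma$, so $F$ intersects every $G$-orbit, and hence every $\Gamma$-orbit; $\Gamma$-automorphicness of $f$ then yields the desired equality.

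Next, I will promote the limit statements of Lemma \ref{lem:period_strip_limit} into a closedness assertion for the image. If $\dim \Gamma \neq n-1$, form the one-point compactification $\bar F \cup \{\infty\}$ and set $\tilde f(\infty) = a$; if $\dim \Gamma = n-1$, use the product description $\bar F \cong \bar D \times \R$ with $\bar D$ compact to form the two-point compactification $\bar F \cup \{+\infty, -\infty\}$ and set $\tilde f(\pm\infty) = a, a'$ using the two one-sided limits. In either case, the limit conclusions of Lemma \ref{lem:period_strip_limit} amount precisely to continuity of $\tilde f$ at the added points. Since $\tilde f$ is then a continuous map from a compact space into the Hausdorff manifold $M$, its image $f(\R^n) \cup S$, where $S$ is $\{a\}$ or $\{a, a'\}$, is compact and hence closed in $M$.

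The final step is to combine this with openness of quasiregular maps in order to conclude $M \setminus f(\R^n) \subseteq S$. Since $f(\R^n)$ is open in $M$ and $f(\R^n) \cup S$ is closed, the set $f(\R^n) \cap (M \setminus S)$ is clopen in $M \setminus S$. The space $M \setminus S$ is connected, as removing at most two points from a connected manifold of dimension at least two preserves connectedness. Moreover, $f(\R^n) \cap (M \setminus S)$ is nonempty: the nonconstancy of $f$ (forced by finite multiplicity in $F$) makes $f(\R^n)$ a nonempty open subset of $M$, and removing the finitely many points of $S$ cannot empty it. Connectedness of $M \setminus S$ then forces $f(\R^n) \cap (M \setminus S) = M \setminus S$, i.e., $M \setminus f(\R^n) \subseteq S$, which yields both assertions of the corollary. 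The step requiring most care will be verifying the two-point compactification in the $\dim \Gamma = n-1$ case, since this is where the count of omitted points can genuinely jump from one to two.
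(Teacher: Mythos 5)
Your proof is correct, and it takes a genuinely different route from the one in the paper. The paper's own argument first invokes the Holopainen--Rickman Picard theorem to deduce that $M \setminus f(\R^n)$ is finite; given an omitted point $y$, it then picks a sequence $f(x_j) \to y$, uses automorphicness to push the $x_j$ into $F$, shows $x_j \to \infty$, and applies Lemma~\ref{lem:period_strip_limit} to a subsequence to conclude $y \in \{a, a'\}$. Your argument instead packages the limit statements of Lemma~\ref{lem:period_strip_limit} as continuity of the extended map $\tilde f$ on a compactification of $\bar F$, deduces that $f(\R^n) \cup S$ is compact (hence closed), and then combines openness of the quasiregular map $f$ with connectedness of $M \setminus S$ to squeeze out the conclusion. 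This topological route is arguably cleaner: it does not invoke Holopainen--Rickman a second time inside the corollary (it is still used inside Lemma~\ref{lem:period_strip_limit}), and it \emph{derives} the finiteness of $M \setminus f(\R^n)$ rather than taking it as given.

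A few small points worth flagging. First, the continuity of $\tilde f$ at the added points requires the limits of Lemma~\ref{lem:period_strip_limit} along $\bar F$, not merely along $F$; this does hold, but only because $f$ is continuous on all of $\R^n$ (so the limit along $F$ forces the same limit along $\bar F \setminus F \subset \partial F$), and you should say so explicitly rather than asserting that the lemma's conclusion is ``precisely'' the needed continuity. Second, your justification that $F$ meets every $G$-orbit by appealing to Lemma~\ref{lem:cover_by_fcells} is slightly indirect but fine, since the proof of that lemma shows $F$ partitions into fundamental cells of $G$, each of which meets every orbit. Third, both your argument and the paper's implicitly use that $M$ is connected (for you, so that $M \setminus S$ is connected; for the paper, so that every omitted point is a limit of values); this is assumed throughout the rest of the paper and is harmless, but is not stated in the corollary itself. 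Finally, the clopen argument needs $n \geq 2$ so that deleting at most two points does not disconnect $M$, which is of course the standard setting for quasiregular maps.
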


\begin{proof}
	Suppose that $y \in M$ is omitted by $f$. Recall that by Holopainen-Rickman \cite{HolopainenRickman1992paper}, the set $M \setminus f(\R^n)$ contains at most finitely many points. Hence, we may fix a sequence $(f(x_n))$ that converges to $y$. Since $f$ is automorphic with respect to $\Gamma$, we may assume that the sequence $(x_n)$ is in the twisted period strip $F$. 
	
	We first show that the sequence $(x_n)$ converges to infinity. Suppose towards contradiction that it has a bounded subsequence $(x_n')$. Then by moving to a further subsequence we may assume $x_n' \to x' \in \R^n$ as $n \to \infty$. This is a contradction, as $f(x') = y$ but $y$ is an omitted point. Hence, $(x_n)$ converges to infinity. 
	
	Now, by Lemma \ref{lem:period_strip_limit}, the sequence $(f(x_n))$ has a subsequence which converges to either $a$ or $a'$, where $a = a'$ if $\dim \Gamma \neq n-1$. Hence, we have $y \in \{a, a'\}$, which concludes the proof.
\end{proof}

With this, we have now essentially proven all of Theorem \ref{prop:extension_result}. For completeness, we recall the statement and give the final details of the proof.

\begin{customthm}{\ref{prop:extension_result}}
	Let $M$ be a closed connected oriented Riemannian $n$-manifold, let $(\Gamma, h, A)$ be a Latt\`es triple into $M$ with induced Latt\`es map $g \colon h(\R^n) \to h(\R^n)$, and let $k = \dim \Gamma$. Then we have the following three cases: 
	\begin{itemize}
		\item if $k = n$, then $h$ is surjective;
		\item if $k = n-1$, then $h$ omits either 1 or 2 points;
		\item if $0 \leq k \leq n-2$, then $h$ omits 1 point.
	\end{itemize}
	Moreover, $g\colon h(\R^n) \to h(\R^n)$ always extends to a Latt\`es map $g \colon M \to M$ on the entire manifold $M$.
\end{customthm}

\begin{proof}
	If $\dim \Gamma = n$, then $\Gamma$ has a bounded fundamental cell, and it follows that $h(\R^n)$ is a compact open subset of $M$. Hence, $h$ is surjective. If $\dim \Gamma < n$, then $\R^n / \Gamma$ is non-compact, and since $h$ induces a homeomorphism $\R^n / \Gamma \to h(\R^n)$, we have that $h$ omits at least one point. Moreover, the upper bound on the number of points in $M \setminus h(\R^n)$ is given by Corollary \ref{cor:image_bound}. The extension part is due to Lemma \ref{lem:extension_and_non-injectivity}.
\end{proof}


\section{Path lifting construction}\label{sect:path_lifting_short}

The core of Martio's proof of the periodic version in \cite{Martio1975paper_kperiod} is a construction of a path family due to Rickman \cite{Rickman1975paper}. In this section, after introducing the setting and notation, we present the automorphic version of the main lemma \cite[Lemma 4.2]{Martio1975paper_kperiod} of Martio's proof.

\subsection{Notation and preliminary considerations}

We begin by fixing some notation and assumptions for the rest of the paper. Let $M$ be a closed, connected, and oriented Riemannian $n$-manifold, and let $f \colon \R^n \to M$ be $K$-quasiregular. Let $\Gamma \leqslant E(n)$ be a discrete subgroup of Euclidean isometries of dimension $k$, and suppose $f$ is automorphic with respect to $\Gamma$. Let $(G, V)$ be a finite index cocompact translation pair of $\Gamma$. Let $D_G$ be a fundamental cell of $\Gamma$, and let $F$ be a twisted period strip of $f$ with respect to $(G, V)$. Suppose to the contrary of Theorem \ref{thm:generalized_martio_periodicity_result} that $0 < k < n-1$ and $f$ has finite multiplicity in $D_G$, and therefore also finite multiplicity in $F$.

By a change of coordinates, we may assume that $V$ is linear. As before, we denote by $V^\perp$ the linear orthogonal complement of $V$. Let $a \in M$ be the limit obtained in Lemma \ref{lem:period_strip_limit}. By the finite multiplicity of $f$ in $F$, there are only finitely many points in $F$ which $f$ maps to $a$. By another change of coordinates under a translation in $V$, we may assume $f(0) \neq a$. We select a $\delta > 0$ small enough that, if $U$ is the unbounded component of $f^{-1} B(a, \delta)$, then the following hold
\begin{enumerate}
	\item \label{enum:U_def_normal} $U$ is open and connected, $f(U) = B_M(a, \delta) \setminus \{a\}$ and $f (\partial U) = \partial B(a, \delta)$ (ie.\ $U$ is a ``normal domain at infinity around $V$'');
	\item \label{enum:U_def_origin} $f(0) \notin B(a, \delta)$;
	\item \label{enum:U_def_chart} there exists an orientation preserving bilipschitz chart $\varphi_a$ on $B(a, \delta)$ for which $\varphi_a(a) = 0$.
\end{enumerate}
The first condition holds for small enough $\delta$ by a variant of the method in \cite[Lemmas I.4.7--9]{Rickman1993book}. The second condition holds for small enough $\delta$ since we could assume that $f(0) \neq a$. For the final condition, see e.g. \cite[Section 2.3]{Kangaslampi-thesis}.

Now $\varphi_a \circ f$ maps $U$ to a punctured neighborhood $\varphi_a(B_M(a, \delta)) \setminus \{0\}$ of $0$. Let $\iota \colon \R^n \setminus \{0\} \to \R^n$ be the inversion map with respect to the unit ball, and denote $\psi = \iota \circ \varphi_a \circ f$ and $U' = \psi(U)$. Then $\psi \colon U \to U'$ is a quasiregular map which takes the boundary of $U$ to the boundary of $U'$. 

Furthermore, since $f$ is automorphic with respect to $G$, we obtain that $G f^{-1} B(a, \delta) = f^{-1} B(a, \delta)$. Since $U$ is a connected component of $f^{-1} B(a, \delta)$ which is unbounded in distance to $V$, we have for every $g \in G$ that $gU$ is connected and unbounded in distance to $V$. Since $U$ is the only such component of $f^{-1} B(a, \delta)$, we have $gU \subset U$ for every $g \in G$. We conclude that $GU = U$.

\subsection{Automorphic version of the path family construction}

With our current notation and assumptions, the situation now closely resembles the one in Martio \cite{Martio1975paper_kperiod}. The differences are that $k$-periodicity is replaced by automorphicness with respect to $G$, and the domain $U$ is not the entire space $\R^n$.

Before stating the automorphic counterpart of \cite[Lemma 4.2]{Martio1975paper_kperiod}, we recall the concept of modulus of a path family. Suppose that $\Upsilon$ is a family of paths in an open set $U \subset \R^n$. A Borel function $\rho \colon U \to [0, \infty)$ is \emph{admissible} for $\Upsilon$ if
\[
	\int_{\sigma} \rho \geq 1
\]
for every locally rectifiable path $\sigma \in \Upsilon$. The \emph{modulus $M(\Upsilon)$ of the path family $\Upsilon$} is then given by
\[
	M(\Upsilon) = \inf_{\rho} \int_U \rho^n \dd m_n,
\]
where the infimum is over all admissible functions $\rho$ for the family $\Upsilon$. Moreover, suppose the paths of $\Upsilon$ are contained in some sphere $S^{n-1}(r) \subset \R^n$ around the origin. Then we define the \emph{spherical $n$-modulus $M^S_n(\Upsilon)$ of $\Upsilon$} by
\[
	M^S_n(\Upsilon) = \inf_{\rho} \int_{S^{n-1}(r)} \rho^n \dd \cH^{n-1},
\]
where the infimum is again over all admissible functions for $\Upsilon$.

We are now ready to state the following automorphic version of \cite[Lemma 4.2]{Martio1975paper_kperiod}.

\begin{lemma}\label{lem:lifting_result}
	Let $S^{\perp}$ denote the set $\{w \in V^\perp : \abs{w} = 1\}$, let $r_0 > 0$ be such that $\R^n \setminus B^n(r_0) \subset U'$, and let $L_1, L_2 \subset V^\perp$ be half-lines starting from $0$. Then for every $r \geq r_0$, there exists a family of paths $\Upsilon_r$ in $S^{n-1}(r)$ satisfying the following.
	\begin{itemize}
		\item $M^{S}_n(\Upsilon_r) \geq C/(N(f, F)^{n+1}r)$, where $C = C(n)$ is a constant.
		\item Every $\sigma \in \Upsilon_r$ has a $\psi$-lift $\sigma'$ in $U$ which starts from $L_1$ and ends in $GL_2 \cup \{\infty\}$.
	\end{itemize}
\end{lemma}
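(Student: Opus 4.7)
The proof would follow the strategy of Martio \cite[Lemma 4.2]{Martio1975paper_kperiod}, which in turn adapts a construction of Rickman \cite{Rickman1975paper} from the periodic case. The generalization to the automorphic setting requires two adjustments: the domain $U$ of $\psi$ is now a strict subset of $\R^n$, and the group $G$ may include non-translation isometries, so that the allowed endpoints of the lifts form the orbit $GL_2$ rather than an affine family of parallel translates of $L_2$. The first adjustment is handled by the normal-domain property \eqref{enum:U_def_normal} of $U$, which ensures that $\psi : U \to U'$ is a proper quasiregular map, and hence Rickman's path-lifting theorem \cite[Chapter II.3]{Rickman1993book} applies: every path in $U'$ lifts to a path in $U$ that either terminates at a prescribed preimage point or exits every compact subset of $U$. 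The second adjustment is mild because $\psi$ is itself $G$-automorphic by construction, so the fiber $\psi^{-1}\{y\}$ decomposes into $G$-orbits.

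The construction proceeds as follows. First, I would observe that $f(x) \to a$ as $d(x, V) \to \infty$, which follows from Lemma \ref{lem:period_strip_limit} applied to $F$ together with Lemma \ref{lem:distance_preserving_fact} and the $G$-automorphicness of $f$. It follows that $\abs{\psi(x)} \to \infty$ as $x \to \infty$ along either $L_1$ or $L_2$, so $\psi(L_1) \cap S^{n-1}(r)$ and $\psi(L_2) \cap S^{n-1}(r)$ are nonempty for every $r \geq r_0$. Second, I would construct a preliminary family of short arcs $\sigma$ on $S^{n-1}(r)$ joining points of $\psi(L_1)$ to points of $\psi(L_2)$. Applying Rickman's lifting theorem, each such $\sigma$ admits a maximal $\psi$-lift starting at a chosen preimage point on $L_1$; the lift either terminates at some point of $\psi^{-1}\{\sigma(1)\}$ or escapes to infinity. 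Since we are free to choose any point of the $G$-orbit of the preimage on $L_2$ as the endpoint, we can always select a valid lift whenever $\sigma$ admits some finite lift whose endpoint lies in $G L_2$, and the remainder are accounted for by the $\{\infty\}$ term.

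For the modulus bound, I would apply the $K_O$-inequality for quasiregular maps (see e.g.\ Rickman \cite{Rickman1993book}) to transfer a lower modulus estimate from the family of lifts in $U$ to the family $\Upsilon_r$ on $S^{n-1}(r)$. The multiplicity factor in this inequality is bounded above by $N(f, F)$ on each $G$-translate of the twisted period strip $F$, and by carefully bounding the number of translates of $F$ that the relevant lifts can intersect, one arrives at a total factor of order $N(f, F)^{n+1}$. The factor $1/r$ reflects the standard scaling of the spherical modulus: a family of short arcs on $S^{n-1}(r)$ of bounded Euclidean length has spherical $n$-modulus of order $1/r$. The main technical obstacle is to ensure that sufficiently many of the constructed arcs yield lifts that actually terminate in $GL_2 \cup \{\infty\}$, and that these lifts can be taken to be essentially disjoint so that the $K_O$-inequality provides a nontrivial bound; this is achieved by starting the lifting procedure from a sufficiently dense subset of $\psi(L_1) \cap S^{n-1}(r)$ and invoking openness of $\psi$ to spread the resulting path family smoothly across the sphere.
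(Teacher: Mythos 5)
Your proposal correctly identifies that the lemma adapts Martio's and Rickman's constructions, and correctly names the two automorphic-specific adjustments (the proper subdomain $U$, the $G$-orbit $GL_2$). But the details diverge from what actually makes the construction work, and the account of the modulus bound is wrong in a way that would not produce the stated estimate.

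First, the modulus bound $M^S_n(\Upsilon_r)\geq C/(N(f,F)^{n+1}r)$ is \emph{not} obtained by transferring an estimate from the family of lifts back down via the $K_O$-inequality. The inequality direction in the lemma is a lower bound for a spherical modulus of a family \emph{in} $S^{n-1}(r)$, and it is computed directly, following Rickman's argument on the sphere. The transfer of modulus between $\Upsilon_r$ and the lifts via a Poletskii/$K_O$-type inequality happens later, in Section~\ref{sect:main_theorem}, where it is combined with the length estimate of Section~\ref{sect:length_estimate} to reach a contradiction. You have the direction and placement of the modulus inequality reversed. Moreover, the family $\Upsilon_r$ is not a family of ``short arcs of bounded Euclidean length''; it consists of concatenations $\beta_v=\beta_{x_{0,v},x,v}*\overleftarrow{\beta_{x_{1,v},x,v}}*\cdots$, one for each direction $v\in S_+$, of long paths that traverse large portions of the sphere. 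The $1/r$ factor comes from Rickman's computation of the spherical modulus of exactly this parametrized family, together with a bound on the number $\#X_{y,z}$ of anchor points (at most $N(f,D_G)$), not from any length-based heuristic about a family of short arcs.

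Second, and more structurally, your proposal omits the core mechanism of the lifting construction: the passage to the quotient $\psi_G\colon U_G\to U'$ on the orbifold $\R^n/G$, the decomposition of preimages by spherical caps $C(x,\theta)$ (Lemmas \ref{lem:boundary_lifting}, \ref{lem:cap_preimage_closure_properties}, \ref{lem:angle_selection_lemma}), the recursive spanning-tree selection of anchor points (Lemma \ref{lem:formalized_lift_selection}), and the Phragm\'en--Brouwer argument (Lemma \ref{lem:separation_lemma}) guaranteeing that $\pi_G(L_1)$ and $\pi_G(L_2)$ meet a common component of $\psi_G^{-1}S^{n-1}(r)$. These are the steps that replace ``pick any preimage on $L_1$ and lift'': naive maximal lifting gives no control over where the lift lands, and the tree construction is precisely what forces the lifted endpoints to agree across all $v\in S_+$ and allows the $\psi$-lift to terminate in $GL_2\cup\{\infty\}$. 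Without them, you cannot assert the second bullet of the lemma, and without the bound $\#X_{y,z}\leq N(f,D_G)$ coming from the tree, you cannot obtain the multiplicity factor in the first bullet.
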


The proof is long, technical, and nearly unchanged from the periodic case. Hence, we only give its key details, and otherwise refer to Martio \cite{Martio1975paper_kperiod} and Rickman \cite{Rickman1975paper}.

\subsection{Proof of Lemma \ref{lem:lifting_result}}

Let $\pi_G$ be the projection $\R^n \to \R^n / G$, and denote $U_G = \pi_G(U)$. Note that while in the periodic case the space $\R^n / G$ is a manifold, in our more general automorphic case it is an orbifold. However, this difference causes no significant changes in the following proofs compared to the periodic case. 

Since $GU = U$, we have $\pi_G^{-1} U_G = U$, and we may therefore consider $U_G$ as a quotient space $U / G$. Since $f$ is automorphic under $G$, the map $\psi = \iota \circ \phi_a \circ f$ descends to a quotient map $\psi_G \colon U_G \to U'$ in $\pi_G$. The map $\psi_G$ inherits many of the topological properties of $\psi$ such as openness and discreteness. Note that $N(\psi_G, U_G) \leq N(f, D_G) \leq N(f, F) < \infty$.

Suppose $r > 0$ is such that the sphere $S^{n-1}(r) \subset \R^n$ is contained in $U'$. Then, given a path $\alpha \colon [a,b] \to S^{n-1}(r)$, we may locally lift $\alpha$ in the map $\psi_G$. Furthermore, since $GU = U$, we obtain that $\pi_G(\partial U) = \partial U_G$. Hence, by property \eqref{enum:U_def_normal} of the definition of $U$, we see that the maximal lifts of $\alpha$ do not tend to the boundary of $U_G$, and are therefore defined on the entire interval $[a,b]$.

We consider then open spherical caps $C(x, \theta) \subset S^{n-1}(r)$ of angle $\theta$ around $x \in S^{n-1}(r)$, where $C(x, \pi) = S^{n-1}(r)$. We also use the notation $\overline{C}(x, \theta)$ for closed spherical caps, and $\partial C(x, \theta)$ for the boundaries of spherical caps. The first key part of the construction is a boundary path-lifting result for these caps. The following is a version of \cite[Lemma 4.4]{Martio1975paper_kperiod}, and the proof is essentially the same.

\begin{lemma}\label{lem:boundary_lifting}
	Let $x \in S^{n-1}(r)$, $\theta \in (0, \pi)$, and let $\beta \colon [a,b] \to \overline{C}(x, \theta)$ be a path for which $\beta(a) \in \partial C(x, \theta)$ and $\beta\big((a,b]\big) \subset C(x, \theta)$. Suppose that $E$ is a component of $\psi_G^{-1}C(x, \theta)$ and $z \in \psi_G^{-1}\{ \beta(a) \} \cap \overline{E}$. Then there exists a maximal $\psi_G$-lift $\alpha \colon [a,b] \to \overline{E}$ of $\beta$ for which $\alpha(a) = z$ and $\alpha\big((a,b]\big) \subset E$.
\end{lemma}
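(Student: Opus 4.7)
The strategy is to follow Rickman's path-lifting theory for discrete open maps \cite[Chapter II]{Rickman1993book}, adapted to allow a boundary starting point, as in \cite[Lemma 4.4]{Martio1975paper_kperiod}. I would split the proof into three parts: a local lift at $z$ entering $E$, a maximal extension via Rickman's theorem, and ruling out premature termination using the normal-domain properties of $U$.

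For the local lift, I would use discreteness and openness of $\psi_G$ (inherited from $\psi$ through the quotient $\pi_G$) to choose a normal neighborhood $W$ of $z$ in $U_G$, that is, a connected precompact neighborhood with $\psi_G^{-1}\{\psi_G(z)\} \cap \overline{W} = \{z\}$ and $\psi_G \vert \overline{W} \to \overline{W'}$ proper, where $W' = \psi_G(W)$. Since $z \in \overline{E} \setminus E$, exactly one component $E_W$ of $W \cap \psi_G^{-1}(C(x, \theta))$ accumulates on $z$, and $E_W \subset E$. For $\epsilon > 0$ small enough that $\beta([a, a + \epsilon]) \subset \overline{W'}$, I would produce a lift $\alpha_0 \colon [a, a + \epsilon] \to \overline{E_W}$ with $\alpha_0(a) = z$ and $\alpha_0((a, a + \epsilon]) \subset E_W$ by the standard branched-cover boundary-lifting construction: select $y_j \in E_W$ with $\psi_G(y_j) = \beta(t_j)$ for some sequence $t_j \searrow a$ and $y_j \to z$, lift $\beta$ piecewise on the subintervals $[t_{j+1}, t_j]$ using properness of $\psi_G \vert \overline{W}$, and concatenate.

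I would then extend $\alpha_0$ by Rickman's maximal lifting theorem to a maximal $\psi_G$-lift $\alpha \colon [a, c) \to U_G$ of $\beta \vert [a, c)$. A connectedness argument forces $\alpha((a, c)) \subset E$, since the connected set $\alpha((a,c)) \subset \psi_G^{-1}(C(x, \theta))$ meets $E$ and is therefore contained in the single component $E$. To conclude $c = b$, I would argue that $\psi \colon U \to U'$ is proper: by the limit behavior in Lemma \ref{lem:period_strip_limit} combined with property (\ref{enum:U_def_normal}), any sequence in $U$ whose image under $\psi$ converges in $U'$ must be bounded, as unboundedness along any twisted period strip would send $f$-values to $a$, contradicting convergence in $U'$. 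Properness passes to the quotient $\psi_G \colon U_G \to U'$ since $\pi_G$ is open and surjective, so $\psi_G^{-1}(\beta([a, c]))$ is compact; hence $\alpha(t)$ has convergent subsequences as $t \to c$, and since $\psi_G^{-1}\{\beta(c)\}$ is finite, $\alpha$ extends continuously to $c$ with $\alpha(c) \in \overline{E}$. Maximality then forces $c = b$. The main technical obstacle, as in Martio's original argument, is the local lift at $z$ when $z$ is a branch point of $\psi_G$: several components of $W \cap \psi_G^{-1}(C(x, \theta))$ may accumulate on $z$, and identifying the one contained in $E$ together with constructing a continuous lift into it requires the normal-neighborhood analysis sketched above. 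Once this local lift is in place, the global extension is routine.
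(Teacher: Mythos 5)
Your overall decomposition — local lift at the boundary point, maximal extension by Rickman's lifting theorem, and a compactness argument to rule out premature termination — is the right framework and matches what the paper has in mind (the paper merely cites Martio's Lemma 4.4 and Rickman's construction and says the proof is ``essentially the same,'' so there is no detailed argument in the text to compare against word-for-word).

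However, there is a genuine error in the termination step: the claim that $\psi \colon U \to U'$ is proper is false. Since $\psi$ is automorphic under $G$ and $G$ is infinite, every fiber $\psi^{-1}\{y\}$ contains an entire $G$-orbit and is therefore infinite and unbounded; a sequence $x_j = g_j x$ with distinct $g_j \in G$ has constant $\psi$-image but escapes to infinity along $V$. Your auxiliary argument (``unboundedness along a twisted period strip sends $f$-values to $a$'') only controls the distance of $\psi^{-1}(K)$ from $V$, not its extent along $V$. The subsequent sentence, that properness ``passes to the quotient since $\pi_G$ is open and surjective,'' is not a valid inference in general and here rests on a false premise. The correct statement is that $\psi_G \colon U_G \to U'$ is proper, and the argument should be phrased directly for $\psi_G$: given compact $K \subset U'$, the set $\psi^{-1}(K)$ is closed in $\R^n$ (it does not accumulate on $\partial U$ because $\psi$ maps $\partial U$ to $\partial U'$ by property \eqref{enum:U_def_normal}), it is $G$-invariant, and by Lemma \ref{lem:period_strip_limit} it is contained in some slab $\overline{B}(V, R)$. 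Since $(G, V)$ is a cocompact translation pair, $\pi_G(\overline{B}(V, R))$ is compact, and $\psi_G^{-1}(K) = \pi_G(\psi^{-1}(K))$ is a closed subset of it, hence compact. With this replacement the rest of your argument goes through. One further minor slip: you first assert that ``exactly one'' component of $W \cap \psi_G^{-1}C(x,\theta)$ accumulates on $z$, and later correctly note that several may; for the existence assertion of the lemma it suffices to pick one such component contained in $E$, so the ``exactly one'' should be dropped.
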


Next, given $x \in S^{n-1}(r)$, $\theta \in (0, \pi]$ and $z \in \psi_G^{-1}\{x\}$, we denote by $D(z, \theta)$ the $z$-component of $\psi_G^{-1} C(x, \theta)$, and by $D'(z, \theta)$ the $z$-component of $\psi_G^{-1} \overline{C}(x, \theta)$. By the following version of \cite[Lemma 4.7]{Martio1975paper_kperiod}, the set $D'(z, \theta)$ decomposes into closures of sets of the form $D(z', \theta)$; the proof is again essentially the same as in the periodic version.

\begin{lemma}\label{lem:cap_preimage_closure_properties}
	Let $x \in S^{n-1}(r)$, $\theta \in (0, \pi]$ and $z \in \psi_G^{-1}\{x\}$. Then $D'(z, \theta) = \bigcup_{z' \in Z} \overline{D(z', \theta)}$, where $Z = \psi_G^{-1}\{x\} \cap D'(z, \theta)$.
\end{lemma}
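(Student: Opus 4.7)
The plan is to prove the two inclusions separately. For $\supseteq$, I would argue purely topologically using connectedness. For $\subseteq$, the key point is that every component $E$ of $\psi_G^{-1} C(x,\theta)$ contained in $D'(z,\theta)$ must contain a preimage of $x$; this would reduce the lemma to showing that every $y \in D'(z,\theta)$ lies either in such a component $E$ (when $\psi_G(y) \in C(x,\theta)$) or in its closure (when $\psi_G(y) \in \partial C(x,\theta)$).

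For the inclusion $\supseteq$, take $z' \in Z$. Then $D(z',\theta) \subseteq \psi_G^{-1}C(x,\theta) \subseteq \psi_G^{-1}\overline{C}(x,\theta)$, and because $\psi_G$ is continuous and $\overline{C}(x,\theta)$ closed in $S^{n-1}(r)$, the set $\psi_G^{-1}\overline{C}(x,\theta)$ is closed in $U_G$; hence $\overline{D(z',\theta)} \subseteq \psi_G^{-1}\overline{C}(x,\theta)$. Since $\overline{D(z',\theta)}$ is connected (closure of a connected set) and intersects the component $D'(z,\theta)$ at the point $z' \in Z \subseteq D'(z,\theta)$, it must be contained in $D'(z,\theta)$.

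For the inclusion $\subseteq$, let $y \in D'(z,\theta)$ and split into cases. If $\psi_G(y) \in C(x,\theta)$, let $E$ be the $y$-component of the open set $\psi_G^{-1}C(x,\theta)$. If instead $\psi_G(y) \in \partial C(x,\theta)$, then by openness of $\psi_G$, every neighborhood of $y$ maps onto a neighborhood of $\psi_G(y)$ in $S^{n-1}(r)$, which necessarily meets the open cap $C(x,\theta)$; hence $y$ lies in $\overline{\psi_G^{-1}C(x,\theta)}$, and in particular $y \in \overline{E}$ for some component $E$ of $\psi_G^{-1}C(x,\theta)$. In either case, $\overline{E}$ is connected, lies inside $\psi_G^{-1}\overline{C}(x,\theta)$, and meets $D'(z,\theta)$ at $y$, so $\overline{E} \subseteq D'(z,\theta)$. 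It remains to identify $E$ as $D(z',\theta)$ for some $z' \in Z$, which amounts to showing $E \cap \psi_G^{-1}\{x\} \neq \emptyset$.

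To produce such a $z'$, I would pick any $y_0 \in E$ and take a path $\beta$ in the open cap $C(x,\theta)$ from $\psi_G(y_0)$ to the center $x$. I then lift $\beta$ maximally under $\psi_G$ starting at $y_0$. By the discussion preceding Lemma \ref{lem:boundary_lifting}, which uses that $\pi_G(\partial U) = \partial U_G$ together with property \eqref{enum:U_def_normal} of $U$, maximal $\psi_G$-lifts of paths in $S^{n-1}(r)$ with $r \geq r_0$ cannot tend to $\partial U_G$, so the lift $\alpha$ is defined on the whole parameter interval. Since $\beta$ stays in $C(x,\theta)$, the lift $\alpha$ stays in $\psi_G^{-1}C(x,\theta)$, and starting in $E$ it remains in $E$ by connectedness. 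Thus $z' := \alpha(1) \in E \cap \psi_G^{-1}\{x\}$, which forces $E = D(z',\theta)$ and $z' \in D'(z,\theta) \cap \psi_G^{-1}\{x\} = Z$; then $y \in \overline{E} = \overline{D(z',\theta)}$, as required. The main obstacle is this last path-lifting step, but the groundwork has already been laid in the setup of Section \ref{sect:path_lifting_short}, so the argument parallels Martio's original proof of \cite[Lemma 4.7]{Martio1975paper_kperiod} with essentially no modification.
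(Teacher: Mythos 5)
Your proof is correct and follows the Rickman--Martio argument that the paper cites without spelling out: the $\supseteq$ direction is the expected connectedness argument, and for $\subseteq$ the path-lifting step to exhibit $E \cap \psi_G^{-1}\{x\} \neq \emptyset$ is precisely where the setup of Section \ref{sect:path_lifting_short} (property \eqref{enum:U_def_normal} and $\pi_G(\partial U) = \partial U_G$) does the work that completeness of $\R^n$ does in the periodic version. One tiny wording slip: openness of $\psi_G$ gives a neighborhood of $\psi_G(y)$ in $U' \subset \R^n$, not in $S^{n-1}(r)$; intersecting with $S^{n-1}(r)$ then meets $C(x,\theta)$, so the conclusion $y \in \overline{\psi_G^{-1}C(x,\theta)}$ is unchanged.
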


Afterwards, it is shown that if $z \in \psi_G^{-1} \{x\}$ and $y \in D(z, \theta)$ with $y \neq z$, then we can shrink the angle $\theta$ until either $y$ is on the boundary of $D(z, \theta)$, or $y$ and $z$ are in different components of $\psi_G^{-1} C(x, \theta)$ but in the same component of $D'(z, \theta)$. This is a part of Martio's proof of \cite[Lemma 4.2]{Martio1975paper_kperiod}, and we again refer there and to Rickman's version \cite[Lemma 3.6]{Rickman1975paper} for the method of proof.

\begin{lemma}\label{lem:angle_selection_lemma}
	Let $x \in S^{n-1}(r)$, $\theta \in (0, \pi]$ and $z \in \psi_G^{-1}\{x\}$. Suppose that $y \in D(z, \theta)$ and $y \neq z$. Let
	\[
		\theta_{z,y} = \sup \{ \tau \in (0, \pi] \colon y \notin C_G(z, \tau) \}.
	\]
	Then the following conditions hold:
	\begin{enumerate}
		\item \label{enum:angle_cond_1} $0 < \theta_{z,y} < \theta$,
		\item \label{enum:angle_cond_2} $y \notin D(z, \theta_{z,y})$, and
		\item \label{enum:angle_cond_3} $y \in D'(z, \theta_{z,y})$.
	\end{enumerate}
\end{lemma}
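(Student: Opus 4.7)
The plan is to verify conditions (1)--(3) by first establishing a monotonicity principle for the family $\{D(z, \tau)\}_{\tau \in (0, \pi]}$, and then handling the three conditions in turn. Since the open spherical caps $C(x, \tau)$ are increasing in $\tau$, their $\psi_G$-preimages are increasing, and the $z$-component is also increasing; moreover, the standard fact that the $z$-component of a union of an increasing family of open sets equals the union of the $z$-components yields the key identity
\[
    D(z, \theta_{z,y}) = \bigcup_{\tau < \theta_{z,y}} D(z, \tau).
\]

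This identity immediately gives (2): if $y$ lay in $D(z, \theta_{z,y})$, it would lie in some $D(z, \tau)$ with $\tau < \theta_{z,y}$, contradicting the definition of $\theta_{z,y}$ as a supremum. For the lower bound $\theta_{z,y} > 0$ in (1), I would use that $\psi_G$ is discrete and open with finite multiplicity to pick a normal neighborhood $W$ of $z$ satisfying $W \cap \psi_G^{-1}\{x\} = \{z\}$ and $y \notin W$; for $\tau$ sufficiently small, $D(z, \tau) \subset W$ by standard properties of discrete open maps, so $y \notin D(z, \tau)$. For the upper bound $\theta_{z,y} < \theta$ in (1), choose a path $\eta \colon [0,1] \to D(z, \theta)$ from $z$ to $y$; the compact image $\psi_G(\eta([0,1]))$ is contained in $\overline{C}(x, \theta')$ for some $\theta' < \theta$, and connectedness of $\eta$ then forces $y \in D(z, \tau)$ whenever $\tau > \theta'$, yielding $\theta_{z,y} \leq \theta' < \theta$.

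Condition (3) is the delicate part. I would take $\tau_k \downarrow \theta_{z,y}$; for each $k$, $y \in D(z, \tau_k) \subset \psi_G^{-1}\overline{C}(x, \tau_k)$, so $y$ lies in the $z$-component $K_k$ of $\psi_G^{-1}\overline{C}(x, \tau_k)$. The sets $\{K_k\}$ are nested and decreasing, and $\bigcap_k K_k \subset \psi_G^{-1}\overline{C}(x, \theta_{z,y})$. The target is to show that the $z$-component of this intersection still contains $y$, so that $y \in D'(z, \theta_{z,y})$.

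The principal obstacle in (3) is exactly the connectedness of the nested intersection, which can fail in general without a compactness hypothesis. The required compactness is provided by the normal-domain-at-infinity structure of $U$, which makes $\psi_G$ proper over closed caps of $S^{n-1}(r)$ and so renders each $K_k$ compact; the classical fact that a nested intersection of compact connected Hausdorff sets is connected then closes the argument. The whole proof closely parallels the periodic version in \cite[Lemma 3.6]{Rickman1975paper} and the proof of \cite[Lemma 4.2]{Martio1975paper_kperiod}, with the only modification being the passage to the orbifold quotient $U_G = U / G$, which inherits the local structure and finite-multiplicity properness needed above.
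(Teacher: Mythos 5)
Your proof is correct, but note that the paper does not actually prove this lemma itself: it only cites Martio's Lemma 4.2 and Rickman's Lemma 3.6 and asserts that the proof carries over with no significant changes to the orbifold quotient $U_G$. Your sketch supplies the details that the paper omits, and the structure you lay out — the monotone-union identity for $D(z,\theta_{z,y})$ giving (2), the local-degree/normal-neighborhood argument for $\theta_{z,y}>0$, the compact-path argument for $\theta_{z,y}<\theta$, and the nested compact-connected-intersection argument for (3), with compactness supplied by properness of $\psi_G$ on $U_G$ via the normal-domain structure of $U$ and finite multiplicity $N(\psi_G,U_G)\leq N(f,D_G)<\infty$ — is exactly the Rickman–Martio strategy that the paper is referencing. (Minor remark: the $C_G(z,\tau)$ in the definition of $\theta_{z,y}$ is evidently a typo for $D(z,\tau)$; you have interpreted it correctly.)
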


Next, we consider a specific class of paths on $S^{n-1}(r)$. Let $x, b \in S^{n-1}(r)$ with $x \neq b$, let $S_+$ denote the upper hemisphere of $S^{n-2}$ centered around the basis vector $e_{n-1}$, and let $\nu \colon S^{n-1} \setminus \{e_n\} \to \R^{n-1}$ be the standard conformal projection. We define paths $\beta_{b, x, v} \colon I \to S^{n-1}(r)$ from $b$ to $x$, where $v$ ranges over $S_+$. Suppose first that $x = e_n$ and $\nu(b) \in [0,\infty)e_{n-1}$. In this case, for every $v \in S_+$, we let $\beta_{b, x, v}$ be the path which projects to the half-line $\nu(b) + [0, \infty)v$ in $\nu$. Then, for general $x$ and $b$, we define the paths $\beta_{b, x, v}$ using a rotation to the previous case. We refer to Rickman \cite[pp.\ 801--802]{Rickman1975paper} for a more detailed description of the construction of $\beta_{b, v}$.

The main step of the construction is to find a finite set of anchor points $\{b_1, b_2, \ldots, b_N\} \subset S^{n-1}(r)$ such that, for every $v \in S_+$, we may connect any two points of $\psi_G^{-1}\{x\}$ in the same component of $\psi_G^{-1}S^{n-1}(r)$ by lifts of $\beta_{b_i, x, v}$. We state and discuss this part in more detail, as the following lemma is not directly given in \cite{Martio1975paper_kperiod} or \cite{Rickman1975paper}, but instead described during the process of constructing the family $\Upsilon_r$.

\begin{lemma}\label{lem:formalized_lift_selection}
	Let $x \in S^{n-1}(r)$, let $E$ be a component of $\psi_G^{-1} S^{n-1}(r)$, and let $Z = \psi_G^{-1}\{x\} \cap E$. Let $\cG_Z$ be the complete graph with vertex set $Z$, and denote its set of edges by $\cE_Z$. Then there exist
	\begin{itemize}
		\item a map $P \colon \cE_Z \to S^{n-1}(r)$,
		\item for every $v \in S_+$, a subgraph $\cG_{Z,v}$ of $\cG_Z$,
	\end{itemize}
	which satisfy the following conditions:
	\begin{enumerate}
		\item \label{enum:subtree_1} $\im P$ contains at most $(\# Z) - 1$ points;
		\item \label{enum:subtree_2} every $\cG_{Z,v}$ is a spanning tree of $Z$;
		\item \label{enum:subtree_3} if $\cE_{Z,v}$ is the set of edges of $\cG_{Z,v}$ and $e \in \cE_{Z,v}$ is an edge between $z_1$ and $z_2$, then the path $\beta_{P(e), x, v}$ has lifts $\alpha_1$ and $\alpha_2$ starting from $P(e)$ and terminating at $z_1$ and $z_2$, respectively.
	\end{enumerate}
\end{lemma}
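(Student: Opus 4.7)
My plan is to introduce a hierarchical clustering on $Z$ via the family of components $\{D(z, \theta) : z \in Z,\ \theta \in (0, \pi]\}$ and extract one anchor point per merge event. Since $\psi_G$ has finite multiplicity $N(\psi_G, U_G) < \infty$, the fiber $Z$ is finite; set $N = \#Z$. The equivalence relation on $Z$ given by $z \sim_\theta z'$ iff $D(z, \theta) = D(z', \theta)$ becomes coarser as $\theta$ grows, reducing to a single class at $\theta = \pi$ because $\psi_G^{-1} C(x, \pi) = \psi_G^{-1} S^{n-1}(r) \supset E \supset Z$. Thus there are at most $N-1$ critical angles $\theta_1 \leq \cdots \leq \theta_{N-1}$ at which two clusters merge.

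For each merge index $i \in \{1, \ldots, N-1\}$, let $\mathcal{C}_1^{(i)}, \mathcal{C}_2^{(i)} \subset Z$ denote the two clusters merging at $\theta_i$. I would then choose representatives $z_1 \in \mathcal{C}_1^{(i)}$ and $z_2 \in \mathcal{C}_2^{(i)}$ whose closures $\overline{D(z_1, \theta_i)}$ and $\overline{D(z_2, \theta_i)}$ touch; this is possible because Lemma \ref{lem:cap_preimage_closure_properties} together with connectedness of the closed component $D'(z_1, \theta_i) = D'(z_2, \theta_i)$ forces a chain of touching open-component closures between the two clusters, so representatives can be taken from a consecutive touching pair. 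Lemma \ref{lem:angle_selection_lemma} guarantees $z_2 \in D'(z_1, \theta_i) \setminus D(z_1, \theta_i)$, so any touching point $y_i \in \overline{D(z_1, \theta_i)} \cap \overline{D(z_2, \theta_i)}$ lies in $\psi_G^{-1} \partial C(x, \theta_i)$. I set $b_i := \psi_G(y_i)$ and define $P(e) := b_i$ for every $e \in \cE_Z$ whose endpoints first join the same cluster at merge index $i$; since every edge is assigned to a unique merge this gives $|\im P| \leq N-1$.

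Given $v \in S_+$, for each $i$ I would take lifts $z_i'(v), z_i''(v) \in \psi_G^{-1}\{b_i\}$ of $b_i$ lying in $\overline{D(z_1, \theta_i)}$ and $\overline{D(z_2, \theta_i)}$ respectively, and apply Lemma \ref{lem:boundary_lifting} to $\beta_{b_i, x, v}$ from each, obtaining maximal $\psi_G$-lifts terminating at some $z_1^{(i)}(v) \in \overline{D(z_1, \theta_i)} \cap Z$ and $z_2^{(i)}(v) \in \overline{D(z_2, \theta_i)} \cap Z$. Let $\cG_{Z,v}$ be the subgraph of $\cG_Z$ with edge set $\{e_i(v) : i = 1, \ldots, N-1\}$, where $e_i(v) := \{z_1^{(i)}(v), z_2^{(i)}(v)\}$. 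Because each $e_i(v)$ joins vertices of the two previously-disjoint clusters $\mathcal{C}_1^{(i)}$ and $\mathcal{C}_2^{(i)}$, the sequence of edges refines the partition of $Z$ step by step without creating cycles, so $\cG_{Z,v}$ is a spanning tree, and condition (3) follows immediately with $P(e_i(v)) = b_i$.

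The main obstacle will be producing the anchor $b_i$ with the two-sided boundary-lifting property, which requires combining Lemma \ref{lem:cap_preimage_closure_properties} (to ensure that the closures of distinct open components inside the same $D'$-component genuinely touch at the critical angle, so a suitable $y_i$ exists) with Lemma \ref{lem:boundary_lifting} (to lift $\beta_{b_i, x, v}$ from $y_i$ into either of the two touching components). A secondary subtlety is that while the terminal points $z_j^{(i)}(v)$, and hence the tree $\cG_{Z,v}$, genuinely depend on $v$, the merge skeleton and the anchor points $b_i$ do not; this $v$-independence of $P$ is exactly what lets $|\im P|$ be bounded by the single count $N-1$ rather than scaling with the number of directions.
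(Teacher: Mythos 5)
Your proposal is essentially the same construction as the paper's, packaged bottom-up instead of top-down: the paper recursively finds, starting from $\theta = \pi$, the largest angle at which $Z$ (or a current piece $Z_i$) first splits into several open components, builds a spanning tree on the touching graph of those components, takes anchor points on shared boundaries, lifts $\beta$ from each anchor into the two adjacent components via Lemma~\ref{lem:boundary_lifting}, and recurses; your ``hierarchical clustering by critical angles'' produces exactly the same tree of merges and the same anchors. Both rely on the same three lemmas (\ref{lem:cap_preimage_closure_properties}, \ref{lem:angle_selection_lemma}, \ref{lem:boundary_lifting}) in the same roles, and the spanning-tree argument at the end is identical.

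One small imprecision worth tightening: when several clusters merge simultaneously at the same critical angle $\theta_i$, the touching structure on the merging open components need not be a chain, and the touching pair you extract may not be $(\mathcal{C}_1^{(i)}, \mathcal{C}_2^{(i)})$ but an adjacent pair along the way. The paper avoids this by explicitly forming the graph $\cG_1$ on the merging components with an edge for each pair of touching closures, noting $\cG_1$ is connected (Lemma~\ref{lem:cap_preimage_closure_properties} plus maximality of the chosen angle), and taking a maximal spanning subtree of $\cG_1$; that gives the correct number of anchors and the correct set of edges in one step, without needing to pre-assign which two clusters ``the'' merge is between. Adopting that formulation makes your ``sequence of merges refines the partition without cycles'' argument entirely clean, and nothing else in your plan needs to change.
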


\begin{proof}
	The proof is done by inductively constructing nested subtrees which can be eventually combined to the desired construction. Let $z \in Z$ and $\theta = \pi$. Suppose that $Z \setminus \{z\} \neq \emptyset$. Then for every $w \in Z \setminus \{z\}$, applying Lemma \ref{lem:angle_selection_lemma} yields a $\theta_{z, w} < \theta$. Let $\theta'$ be the maximal such $\theta_{z, w}$.
	
	Now, by the selection of $\theta'$, the set $\psi_G^{-1} C(x, \theta')$ has multiple components which intersect $Z$. Denote these components $E_i$ with $i \in I$, and denote $Z_i = Z \cap E_i$. Moreover, Lemma \ref{lem:cap_preimage_closure_properties} and the selection of $\theta'$ yield that $\bigcup_i \overline{E_i}$ is connected. Let $\cG_1$ be the graph with set of vertices $\{E_i: i \in I\}$ and set of edges $\{(E_i, E_j) : i, j \in I, i \neq j, \overline{E_i} \cap \overline{E_j} \neq \emptyset\}$. It follows that $\cG_1$ is a connected graph with more than one vertex.
	
	Let $\cG_1'$ be a maximal subtree of $\cG_1$. For every edge $(E_i, E_j)$ in $\cG_1'$, there exists a point $b_{i,j} \in \partial E_i \cap \partial E_j$. By Lemma \ref{lem:boundary_lifting}, we may for each $v \in S_+$ select two lifts of $\beta_{b_{i,j}, v}$, with starting point $b_{i,j}$ and endpoints $z_{v} \in Z_i$ and $z_v' \in Z_j$, respectively. We add the edge between $z_v$ and $z_v'$ to $\cG_{X,v}$, and set $P$ to map that edge into $b_{i,j}$.
	
	This construction is then recursively repeated for each $Z_i$ that is not a singleton, starting with some point $z \in Z_i$ and the angle $\theta'$. Since the construction yields more than one $Z_i$, we eventually terminate at a situation where every $Z_i$ is a singleton. It is now easy to see that by the end of the construction, the graph $\cG_{Z,v}$ is a spanning tree of $Z$ for every $v \in S_+$, and in the process we selected at most $(\# Z) - 1$ points $b_{i,j}$.
\end{proof}

We obtain the following corollary.

\begin{cor}\label{cor:constructed_path}
	Let $y$ and $z$ be in the same component $E$ of $\psi_G^{-1} S^{n-1}(r)$, and let $x = \psi_G(z)$. Then there exists a set $X_{y,z} \subset S^{n-1}(r)$ with $\# X_{y,z} \leq N(f, D_G)$, and for every $v \in S_+$, a path in $S^{n-1}(r)$ of the form
	\[
		\beta_v 
		= \beta_{x_{0,v}, x, v}
			* \overleftarrow{\beta_{x_{1,v}, x, v}} 
			* \beta_{x_{1,v}, x, v} 
			* \overleftarrow{\beta_{x_{2,v}, x, v}}
			* \cdots * \beta_{x_{p,v}, x, v},
	\]
	where $p \leq N(f, D_G)$, $x_{i,v} \in X_{y,z}$ for all $i$, and $\beta_v$ has a $\psi_G$-lift from $y$ to $z$.
\end{cor}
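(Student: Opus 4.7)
The plan is to use the spanning trees from Lemma \ref{lem:formalized_lift_selection} to assemble the lift via a two-step procedure: an initial ``entry'' lift from $y$ into $Z = \psi_G^{-1}\{x\} \cap E$, followed by a traversal through the tree $\cG_{Z,v}$ that ends at $z$. I will define the set $X_{y,z} = \{\psi_G(y)\} \cup \im P$, which has cardinality at most $1 + (\# Z - 1) = \# Z$. Since $Z$ is contained in a single $\psi_G$-fibre, $\# Z \leq N(\psi_G, U_G) \leq N(f, D_G)$, which gives the required bound on $\# X_{y,z}$.

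Fix $v \in S_+$ and set $x_{0,v} = \psi_G(y) \in S^{n-1}(r)$. For the entry lift I take a maximal $\psi_G$-lift of $\beta_{x_{0,v}, x, v}$ starting at $y$; the discussion preceding Lemma \ref{lem:boundary_lifting} (which uses property (1) of $U$ together with $\pi_G(\partial U) = \partial U_G$) guarantees that this maximal lift is defined on the entire parameter interval, so its terminal point lies in $Z$, call it $w_1$. Since $\cG_{Z,v}$ is a spanning tree, there is then a unique vertex sequence $w_1, w_2, \dots, w_{p+1} = z$ with $p+1 \leq \# Z$ and each $\{w_i, w_{i+1}\}$ an edge of $\cG_{Z,v}$. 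For $i = 1, \dots, p$ define $x_{i,v} = P(\{w_i, w_{i+1}\}) \in \im P \subset X_{y,z}$. By condition (3) of Lemma \ref{lem:formalized_lift_selection}, for each such $i$ the path $\beta_{x_{i,v}, x, v}$ has two $\psi_G$-lifts $\alpha_{i,-}$ and $\alpha_{i,+}$ starting at a common upstairs preimage of $x_{i,v}$ and terminating at $w_i$ and $w_{i+1}$, respectively.

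Concatenating the entry lift from $y$ to $w_1$ with $\overleftarrow{\alpha_{i,-}} * \alpha_{i,+}$ for $i = 1, \dots, p$ then produces a $\psi_G$-lift from $y$ to $z$ of the downstairs path
\begin{equation*}
\beta_v = \beta_{x_{0,v}, x, v} * \overleftarrow{\beta_{x_{1,v}, x, v}} * \beta_{x_{1,v}, x, v} * \cdots * \overleftarrow{\beta_{x_{p,v}, x, v}} * \beta_{x_{p,v}, x, v},
\end{equation*}
and the bound $p \leq \# Z - 1 \leq N(f, D_G)$ follows immediately from the tree structure. The argument is essentially a packaging of Lemma \ref{lem:formalized_lift_selection}; the only new ingredient is the observation that $y$ need not itself lie in $Z$, which is handled by allocating $\psi_G(y)$ as an extra anchor point in $X_{y,z}$ and using the entry lift to descend into $Z$ before starting the tree traversal. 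The one place where something might a priori go wrong is precisely this entry lift, but as just noted its completeness is exactly the content of the preparatory discussion above Lemma \ref{lem:boundary_lifting}.
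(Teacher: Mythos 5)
Your proposal is correct and follows essentially the same route as the paper: you define $X_{y,z} = \{\psi_G(y)\} \cup \im P$, use a maximal lift of $\beta_{x_{0,v},x,v}$ from $y$ to land at a point $w_1 \in Z$, and then traverse the spanning tree $\cG_{Z,v}$ from $w_1$ to $z$ using the two-lift property from condition (3) of Lemma \ref{lem:formalized_lift_selection}. The only difference is that you spell out the tree traversal and the completeness of the entry lift in more detail than the paper does, which is a fair elaboration rather than a change of strategy.
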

\begin{proof}
	Let $x_0 = \psi_G(y)$. We apply Lemma \ref{lem:formalized_lift_selection} to $x$ and $E$ obtaining the map $P \colon \cE_X \to S^{n-1}(r)$, and select $X_{y,z} = \{x_0\} \cup \im P$. Now, suppose $v \in S_+$. We select $x_{0,v} = x_0$, and note that $\beta_{x_{0,v}, v}$ has a $\psi_G$-lift connecting $y$ to some $z' \in \psi_G^{-1}\{x\}$. The statement of Lemma \ref{lem:formalized_lift_selection} now yields the remaining part of $\beta_v$ connecting $z'$ to $z$.
\end{proof}

Before concluding the proof of the construction, it remains to show a property of the components of $\psi_G^{-1} S^{n-1}(r)$.

\begin{lemma}\label{lem:separation_lemma}
	Suppose that $r \geq r_0$. Then there exists a component $D$ of $\psi_G^{-1} S^{n-1}(r)$ such that, for every half-line $L \subset V^\perp$ starting from $0$, we have that $\pi_G(L)$ intersects $D$.
\end{lemma}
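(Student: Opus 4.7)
The plan is to identify $D$ as the inner boundary of a large ``exterior region'' in $U_G$ where $|\psi_G|$ is large. First, combining Lemma \ref{lem:period_strip_limit} with the $G$-invariance of the distance-to-$V$ function (Lemma \ref{lem:distance_preserving_fact}), one deduces that $f(x) \to a$ as $d(x, V) \to \infty$ throughout $\R^n$, and hence $|\psi(x)| \to \infty$ as $d(x, V) \to \infty$ within $U$. Thus, for the fixed $r \geq r_0$, there exists $R_1 = R_1(r)$ such that the slab exterior $A := \{x \in \R^n : d(x, V) > R_1\}$ is contained in $U$ and satisfies $|\psi|_A > r$. The standing assumption $k < n-1$ gives $k \leq n-2$, so $A$ is both connected and $G$-invariant.

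\textbf{Defining $D$.} Let $E$ be the connected component of $\{x \in U : |\psi(x)| > r\}$ containing $A$; then $E$ is open and $G$-invariant. By the normality of $U$ and the bilipschitz property of $\varphi_a$, the map $|\psi|$ is uniformly bounded on $\partial U$ by some $C_0 < \infty$; after enlarging $r_0$ past $C_0$, one obtains $\partial E \subset U$ and hence $\partial E \subset \psi^{-1}(S^{n-1}(r))$. In the quotient, $E_G := \pi_G(E)$ is open and connected with $\partial E_G \subset \psi_G^{-1}(S^{n-1}(r))$. For each $w \in S^\perp$ set
\[
	t^*_w := \inf\bigl\{t > 0 : [t,\infty) \cdot w \subset E\bigr\},
\]
which is finite since $[R_1, \infty) \cdot w \subset A \subset E$, and yields $t^*_w w \in \partial E \subset \psi^{-1}(S^{n-1}(r))$. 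Fixing a direction $w_0 \in S^\perp$, I take $D$ to be the component of $\psi_G^{-1}(S^{n-1}(r))$ containing $\pi_G(t^*_{w_0} w_0)$; the corresponding half-line $L_{w_0}$ then passes through $t^*_{w_0} w_0$, so $\pi_G(L_{w_0}) \cap D \neq \emptyset$ automatically.

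\textbf{Direction independence; main obstacle.} It remains to show $\pi_G(t^*_w w) \in D$ for every $w \in S^\perp$, whence $\pi_G(L_w) \cap D \neq \emptyset$ in general. Since $n - k \geq 2$, both $S^\perp$ and the shell $\Sigma_T := T \cdot S^\perp \subset A$ (for $T > R_1$) are connected. Given $w, w' \in S^\perp$, concatenating the inward ray $t \mapsto \pi_G(tw)$ on $[t^*_w, T]$, a continuous arc in $\pi_G(\Sigma_T) \subset E_G$ joining $\pi_G(Tw)$ to $\pi_G(Tw')$, and the reverse of $t \mapsto \pi_G(tw')$ on $[t^*_{w'}, T]$ produces a continuous path in $\overline{E_G}$ with interior in $E_G$ and endpoints $\pi_G(t^*_w w)$, $\pi_G(t^*_{w'} w')$ in $\psi_G^{-1}(S^{n-1}(r))$. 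The essential obstacle is that this connectivity in $\overline{E_G}$ does not by itself force the endpoints into a common component of the frontier — an annular $E_G$ would already obstruct it — so the argument must exploit that $E_G$ is a one-ended neighborhood of the ``$V$-distance infinity'' of $U_G$, whose inner boundary consequently has only one component. I expect to conclude from upper semi-continuity of $w \mapsto t^*_w$, which follows from openness of $E$ and compactness of $[t^*_w, R_1]$, combined with a finite-cover argument on the compact connected $S^\perp$: for $w'$ sufficiently close to $w$, the ray segments stay close within $E$, so $\pi_G(t^*_{w'} w')$ remains in the same local component of $\psi_G^{-1}(S^{n-1}(r))$ as $\pi_G(t^*_w w)$, and this local invariance propagates globally to the single component $D$.
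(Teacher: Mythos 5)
Your approach diverges from the paper's and has a genuine gap at exactly the step you flag as the ``essential obstacle.''

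The paper's proof is a clean separation argument: one observes that $\psi_G^{-1} S^{n-1}(r)$ has at most $N(f,D_G)<\infty$ components $D_1,\dots,D_m$, pulls them back to disjoint closed sets $E_i=\pi_G^{-1}D_i$ in $\R^n$, notes that their union separates $0$ (which lies outside $U$) from a point $z$ in the unbounded component of $\psi^{-1}(\R^n\setminus\overline{B^n}(r))$, and then applies the Phragm\'en--Brouwer property inductively to extract a \emph{single} $E_i$ that separates $0$ from $z$. Since every half-line $L\subset V^\perp$ from $0$ eventually enters that unbounded component, $L$ must cross $E_i$, and one takes $D=D_i$. No claim about connectedness of any boundary is needed.

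Your proposal sets up a sensible ``exterior region'' $E$ and the entry times $t^*_w$, and your verification of upper semicontinuity of $w\mapsto t^*_w$ is fine. But upper semicontinuity alone does not prevent $t^*_{w}$ from dropping discontinuously as $w$ varies: $\limsup_{w'\to w}t^*_{w'}\le t^*_w$ is consistent with $\liminf_{w'\to w}t^*_{w'}$ being strictly smaller, in which case $t^*_{w'}w'$ can land in a component of $\psi^{-1}(S^{n-1}(r))$ entirely different from the one containing $t^*_w w$, and your ``local invariance propagates globally'' step breaks. Moreover, the heuristic you invoke to save the argument --- that a one-ended open neighborhood of $V$-infinity has a connected inner boundary --- is false as a topological statement: $\R^n\setminus(K_1\cup K_2)$ for disjoint compacta $K_1,K_2$ (with $n\ge 2$) is connected and one-ended, yet its boundary has two components. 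What is true, and what your argument really needs, is not connectedness of $\partial E_G$ but the existence of one \emph{separating} component among the finitely many components of $\psi_G^{-1}S^{n-1}(r)$. That is exactly what Phragm\'en--Brouwer delivers, and I do not see how to avoid some incarnation of it. If you want to salvage your setup, the natural move is to drop the $t^*_w$ analysis and run the Phragm\'en--Brouwer argument with the sets $E_i$ as in the paper; your exterior region $E$ can then play the role of the unbounded component of $\psi^{-1}(\R^n\setminus\overline{B^n}(r))$.
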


For the proof, we recall the \emph{Phragm\'en-Brouwer property} of $\R^n$: if $x, y \in \R^n$, $n \geq 2$, and $A, B \subset \R^n$ are disjoint closed sets for which $A \cup B$ separates $x$ from $y$, then $A$ or $B$ separates $x$ from $y$. This is proven by a simple application of the Mayer-Vietoris sequence of homology groups. Indeed, applying Mayer-Vietoris for $\R^n \setminus A$ and $\R^n \setminus B$, we have the exact sequence
\[\begin{tikzcd}[cramped, sep=small]
	\cdots \ar[r] 
	& H_1(\R^n) \ar[r] 
	& H_0(\R^n \setminus (A \cup B)) \ar[r, "{(i_*,j_*)}"] 
	&[3mm] H_0(\R^n \setminus A) \oplus H_0(\R^n \setminus B) \ar[r] 
	& \ldots 
\end{tikzcd}\]
where $i \colon \R^n \setminus (A \cup B) \hookrightarrow \R^n \setminus A$ and $j \colon \R^n \setminus (A \cup B) \hookrightarrow \R^n \setminus B$ are inclusions. Now, since $H_1(\R^n) = 0$, exactness implies that either $i_*[\{x\}-\{y\}] \neq 0$ or $j_*[\{x\}-\{y\}] \neq 0$, which proves that the property holds.

\begin{proof}[Proof of Lemma \ref{lem:separation_lemma}]
	It is easy to see by a path-lifting argument that every component of $\psi_G^{-1} S^{n-1}(r)$ is mapped surjectively onto $S^{n-1}(r)$. It follows that $\psi_G^{-1} S^{n-1}(r)$ has at most $N(\psi_G, U_G) \leq N(f, D_G)$ many components in $\R^n / G$. Denote these components $D_1, \ldots, D_m$, and let $E_i = \pi_G^{-1} D_i$ for every $i \in \{1, \ldots, m\}$. Note that the sets $E_i$ are closed and disjoint, and that $\bigcup_{i=1}^m E_i = \psi^{-1} S^{n-1}(r)$.
	
	Let then $z$ be a point in the unbounded component of $\psi^{-1} (\R^n \setminus \overline{B^n}(r))$. We then have that $\psi^{-1} S^{n-1}(r)$ separates 0 and $z$. By inductively applying the Phragm\'en-Brouwer property, we find a $E_i$ which separates 0 and $z$. Now, for every half-line $L \subset V^\perp$, the map $\psi$ tends to infinity along $L$, and therefore $L$ meets the unbounded component of $\psi^{-1} (\R^n \setminus \overline{B^n}(r))$. It follows that $L$ meets $E_i$, and therefore a selection of $D = D_i$ completes the proof of the lemma.
\end{proof}

Now, let $r \geq r_0$ and let $L_1$ and $L_2$ be half-lines in $V^\perp$ starting from $0$. It follows by Lemma \ref{lem:separation_lemma} that $\pi_G(L_1)$ and $\pi_G(L_2)$ intersect the same component of $\psi_G^{-1} S^{n-1}(r)$. Let $y$ and $z$ be the respective points of intersection.

We let $\Upsilon_r = \{\beta_v : v \in S_+\}$, where $\beta_v$ are given by Corollary \ref{cor:constructed_path} for $y$ and $z$. Since the paths $\beta_v$ have a lift in $\psi_G$ from $y$ to $z$, it follows that $\beta_v$ also have a lift in $\psi$ which starts from $L_1$ and either ends at $GL_2$ or escapes to infinity. Therefore, the only remaining part of the proof of Lemma \ref{lem:lifting_result} is to prove the modulus estimate. For that, we refer to Rickman's proof in \cite[pp.\ 804--805]{Rickman1975paper}, which yields that $M^{S}_n(\Upsilon_r) \geq C(n) / (\# X_{y,z})$ where $X_{y,z}$ is given by Corollary \ref{cor:constructed_path}.

With that, the proof of Lemma \ref{lem:lifting_result} is complete.


\section{Length of paths}\label{sect:length_estimate}

The core idea behind the proof of the main result is that the lifts $\sigma'$ in Lemma \ref{lem:lifting_result} are long. Martio achieves this in the periodic case by selecting $L_1$ and $L_2$ with opposite directions. Then every translation of $L_2$ by a period moves it further from $L_1$, resulting in a simple length estimate of $\len(\sigma') \geq \abs{\sigma'(0)}$. 

However, since in the automorphic case the maps $\gamma \in G$ also have a rotational component, the automorphic version of this step requires a more careful analysis. Our goal is to prove the following length estimate for the lifts $\sigma'$ in Lemma \ref{lem:lifting_result}.

\begin{lemma}\label{lem:length_estimate}
	Let $\eps > 0$, $k = \dim \Gamma$ and $l = \dim \Gamma_T$. Then there exist half-lines $L_1, L_2 \subset V^\perp$ starting from $0$ such that the following holds: if $\sigma'$ is a path from $L_1$ to $GL_2$ and $s = \inf_t d(V, \sigma'(t))$, then we have 
	\[
		\len(\sigma') \geq C s^{\frac{n-k-1}{n-l-1} - \eps},
	\]
	whenever $s \geq 1$, where $C = C(G, V, \eps) > 0$.
\end{lemma}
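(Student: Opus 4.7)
The plan is to exploit two independent lower bounds on $\len(\sigma')$, one from projecting $\sigma'$ to $V$ and one from projecting to the unit sphere $S^{n-k-1} \subset V^\perp$ of directions transverse to $V$, and then to choose $L_2$ generically so that the angular bound is quantitatively strong for every $\gamma \in G$ simultaneously. Optimizing between the two bounds will then yield the desired exponent.

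For the elementary length bounds, write $L_i = [0,\infty)v_i$ for unit vectors $v_1, v_2 \in V^\perp$. Since $V$ is linear, Lemma \ref{lem:identity_restriction} ensures $\tran(\gamma) \in V$ and $\ort(\gamma)$ preserves $V^\perp$ for every $\gamma \in G$, so $\gamma L_2 = \tran(\gamma) + [0,\infty)\ort(\gamma)v_2$ is a ray perpendicular to $V$ meeting $V$ at $\tran(\gamma)$. Writing $\sigma'(t) = p_V(t) + r(t)u(t)$ in cylindrical coordinates around $V$, the three summands of $(\sigma')'$ are pairwise orthogonal, so $|(\sigma')'|^2 = |p_V'|^2 + |r'|^2 + r^2 |u'|^2$. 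Integrating the two nontrivial summands individually and using $r(t) \geq s$ yields
\[
    \len(\sigma') \geq |\tran(\gamma)|
    \qquad\text{and}\qquad
    \len(\sigma') \geq s\,\angle(v_1, \ort(\gamma)v_2),
\]
where $\angle$ denotes spherical distance on $S^{n-k-1}$, since $p_V$ connects $0$ to $\tran(\gamma)$ and $u$ connects $v_1$ to $\ort(\gamma)v_2$.

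Now fix $v_1 \in S^{n-k-1}$ arbitrarily and choose $v_2$ by a measure-theoretic argument. Set $\alpha = (k-l)/(n-k-1) + \eps'$ for a small $\eps' > 0$ to be determined. For each $O \in \ort(G \setminus \{\id_{\R^n}\})$ with $\tilde R_O := \min\{|\tran(\gamma)| : \gamma \in G,\, \ort(\gamma) = O\} \geq R_0$, let $B_O \subset S^{n-k-1}$ be the spherical cap of radius $c\tilde R_O^{-\alpha}$ around $O^{-1}v_1$, where $c > 0$ is a small constant. If $v_2 \notin \bigcup_O B_O$, then $\angle(v_1, \ort(\gamma)v_2) \geq c|\tran(\gamma)|^{-\alpha}$ for every $\gamma$ with $|\tran(\gamma)| \geq R_0$, since $|\tran(\gamma)| \geq \tilde R_{\ort(\gamma)}$. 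Using $m(B_O) \lesssim \tilde R_O^{-\alpha(n-k-1)}$ and decomposing dyadically in $\tilde R_O$, the growth estimate $\Lambda_G(r) = \Theta(r^{k-l})$ from Lemma \ref{lem:orth_part_growth_estimate} yields
\[
    m\Bigl(\bigcup_O B_O\Bigr)
    \lesssim c^{n-k-1} \sum_{j\,:\,2^j \geq R_0} 2^{j[(k-l) - \alpha(n-k-1)]},
\]
a convergent geometric series since $\alpha > (k-l)/(n-k-1)$, tending to $0$ as $R_0 \to \infty$. The remaining $\gamma \in G \setminus \{\id_{\R^n}\}$ with $|\tran(\gamma)| < R_0$ are finitely many by discreteness, so further avoiding the corresponding finite set of points $\ort(\gamma)^{-1}v_1$ yields a valid $v_2$ with $\angle(v_1, \ort(\gamma)v_2) \geq c_0 > 0$ for such $\gamma$.

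Combining the two bounds: for $\gamma$ with $|\tran(\gamma)| \geq R_0$,
\[
    \len(\sigma') \geq \max\bigl(|\tran(\gamma)|,\, sc|\tran(\gamma)|^{-\alpha}\bigr) \geq (sc)^{1/(1+\alpha)},
\]
and $1/(1+\alpha) = (n-k-1)/\bigl((n-l-1) + \eps'(n-k-1)\bigr)$ exceeds $(n-k-1)/(n-l-1) - \eps$ when $\eps'$ is small in terms of $\eps$. For $|\tran(\gamma)| < R_0$ the bound $\len(\sigma') \geq sc_0$ dominates $s^{(n-k-1)/(n-l-1) - \eps}$ whenever $s \geq 1$, since the relevant exponent is at most $1$. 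The main obstacle is the measure estimate: summing the cap measures over all $\gamma \in G$ naively would give only the weaker exponent $(n-k-1)/(n-1)$, and obtaining the sharp exponent $(n-k-1)/(n-l-1)$ essentially requires the observation that each cap depends on $\gamma$ only through $\ort(\gamma)$, together with the slower growth rate $\Lambda_G$ of distinct orthogonal parts from Lemma \ref{lem:orth_part_growth_estimate}.
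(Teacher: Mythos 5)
Your proposal is correct and takes essentially the same approach as the paper: the paper fixes $L_2$ arbitrarily and uses the sphere-filling Lemma~\ref{lem:sphere_filling_estimate} together with the index bound $j \leq \Lambda^V_G(r)$ to choose $L_1$, whereas you fix $L_1$ and choose $L_2$ directly via a dyadic measure estimate on the caps $B_O$ --- a dual choice that gives the same two length bounds, invokes Lemma~\ref{lem:orth_part_growth_estimate} in the same way, and performs the same optimization in $r$. One small oversight: your case analysis excludes $\gamma = \id_{\R^n}$, so you should additionally require $v_2 \neq v_1$ (one more point of measure zero to avoid) in order to handle paths terminating on $L_2$ itself.
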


The estimate follows from two lemmas. The first one is the asymptotic growth estimate of $\ort(G)$ given in Lemma \ref{lem:orth_part_growth_estimate}. The second one is the following lower bound on the growth of density of a sequence in $S^m$.

\begin{lemma}\label{lem:sphere_filling_estimate}
	Let $(x_j) \in S^m$ be a sequence and let $\eps > 0$. Then there exists a point $y \in S^m$ for which
	\[
		d(y, x_j) \geq \frac{C}{j^{\frac{1}{m}+\eps}},
	\]
	for every $j \in \Z_+$, where $C = C(m, \eps) > 0$.
\end{lemma}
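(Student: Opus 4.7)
The plan is a straightforward volume argument: for the right choice of constant $C$, the balls that need to be avoided have total surface measure strictly less than that of $S^m$, so a valid point $y$ exists.

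Concretely, for a constant $C > 0$ to be fixed later, define the bad set
\[
	B_j = \left\{ y \in S^m : d(y, x_j) < \frac{C}{j^{1/m + \eps}} \right\}.
\]
Any point $y \in S^m \setminus \bigcup_{j=1}^\infty B_j$ satisfies the conclusion of the lemma, so it suffices to choose $C$ so that the union does not cover $S^m$.

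Each $B_j$ is a spherical cap of radius $Cj^{-1/m-\eps}$, and standard estimates for the surface measure of spherical caps yield a constant $C_0 = C_0(m) > 0$ such that
\[
	\cH^m(B_j) \leq C_0 \left( \frac{C}{j^{1/m + \eps}} \right)^m
	= \frac{C_0 C^m}{j^{1 + m\eps}}.
\]
Summing over $j$ and using convergence of $\sum_{j=1}^\infty j^{-1-m\eps}$ (since $m\eps > 0$), we obtain
\[
	\sum_{j=1}^\infty \cH^m(B_j)
	\leq C_0 C^m \sum_{j=1}^\infty \frac{1}{j^{1 + m\eps}}
	= C_1 C^m,
\]
for a constant $C_1 = C_1(m, \eps) > 0$.

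Choose $C = C(m, \eps) > 0$ small enough that $C_1 C^m < \cH^m(S^m)$. Then
\[
	\cH^m\left( S^m \setminus \bigcup_{j=1}^\infty B_j \right)
	\geq \cH^m(S^m) - C_1 C^m > 0,
\]
so this complement is nonempty; any $y$ in it satisfies the required estimate. The argument is entirely routine and poses no real obstacle, the only care being in choosing $\eps > 0$ as part of the exponent so that the series $\sum j^{-1-m\eps}$ converges and the measure bound closes.
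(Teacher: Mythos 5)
Your proof is correct and takes essentially the same volume-counting approach as the paper: bound the measure of the union of forbidden caps by a convergent series and shrink $C$ until the complement is nonempty. (Incidentally, you correctly state the cap-measure upper bound $\cH^m(B_{S^m}(x,r)) \leq C_0 r^m$; the paper's proof has an apparent typo writing $\geq$ where $\leq$ is meant.)
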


\begin{proof}
	Let $C \in (0, 1)$. We define the set
	\[
		E = \left\{ y \in S^m : 
			d_{S^m}(y, x_j) < Cj^{-1/m-\eps} \text{ for some } j \in \Z_+ \right\},
	\]
	and note that $E = \bigcup_{j \in \Z_+} B_{S^m}(x_j, Cj^{-1/m-\eps})$. Recall that for every $x \in S^m$ and $r \in [0,1]$, we have $\cH^m(B_{S^m}(x, r)) \geq C'r^m$, where $C'$ is dependent only on $m$. Hence, 
	\[
		\cH^m(E) \leq C' C^m \sum_{j=1}^\infty \frac{1}{j^{1+m\eps}} < \infty.
	\]
	Therefore, by selecting $C$ small enough we have that $\cH^m(E) < \cH^m(S^m)$, and any point $y \in S^m \setminus E$ satisfies the required condition.
\end{proof}

Lemma \ref{lem:length_estimate} now follows by assembling these two estimates together.

\begin{proof}[Proof of Lemma \ref{lem:length_estimate}]
	We first let $L_2$ be any half-line in $V^\perp$ starting from $0$. Then, we order the lines $\gamma L_2$ for $\gamma \in G$ with respect to the absolute value of $\tran_V(\gamma)$. The directions of these lines correspond to a sequence $(x_j)$ of points in $S^\perp$, ordered in the same manner with repeat directions skipped. Since $S^\perp$ is an isometric copy of $S^{n-k-1}$, we may apply Lemma \ref{lem:sphere_filling_estimate} to the sequence $(x_j)$, which yields a point $y \in S^\perp$. Now, let $L_1 = [0,\infty)y$.
	
	Suppose that $\sigma'$ is a path from $L_1$ to $\gamma(L_2)$ for some $\gamma \in G$. Let $v = \tran_V(\gamma)$, let $r = \abs{v}$, and let $s = \inf_t d(V, \sigma'(t))$. Furthermore, let $r_{\min} = \min \left\{ \tran_V(\gamma') : \gamma' \in G, \tran_V(\gamma') \neq 0 \right\}$. By discreteness of $G$ we have $r_{\min} > 0$. 
	
	Consider first the case $r > 0$, in which case $r \geq r_{\min}$. We obtain two lower bounds for the length of $\sigma'$. The first one is by considering the distance of $L_1$ and $\gamma L_2$ in the $V$-direction, and is simply
	\begin{equation}\label{eq:length_bound_first}
		\len(\sigma') \geq r.
	\end{equation}
	
	The second estimate is based on the distance of $L_1$ and $\gamma L_2$ in the $V^\perp$-direction. Suppose $\gamma L_2$ is in the direction of $x_j \in S^\perp$ as discussed previously. Then $j \leq \Lambda^V_G(r)$. Now, by Lemma \ref{lem:orth_part_growth_estimate}, there exist $C' > 0$ and $r_0 > 0$ dependent on $V$ and $G$ for which we have $\Lambda^V_G(r) \leq C'r^{k-l}$ if $r \geq r_0$. Since $\Lambda^V_G(r)$ has positive integer values, we may in fact assume that $r_0 = r_{\min}$. Hence, for every $\eps' > 0$, Lemma \ref{lem:sphere_filling_estimate} gives us the estimate
	\begin{equation}\label{eq:length_bound_second}
		\len(\sigma') \geq s d_{S^{n-k-1}}(x_j, y) \geq \frac{Cs}{r^{\frac{k-l}{n-k-1} + \eps'}},
	\end{equation}
	where $C = C(G, V, \eps')$.
	
	Hence, we obtain a lower bound on the length of $\sigma'$ which is the maximum of the bounds \eqref{eq:length_bound_first} and \eqref{eq:length_bound_second}. In terms of $r$, the bound \eqref{eq:length_bound_first} is increasing and the bound \eqref{eq:length_bound_second} is decreasing. Therefore, their maximum assumes its smallest value when the two lower bounds are equal, that is,
	\[
		sC = r^{1+\frac{k-l}{n-k-1} +\eps'}.
	\]
	Solving for $r$ and selecting $\eps'$ suitably gives
	\[
		r = C s^{\left(1 + \frac{k-l}{n-k-1}+\eps'\right)^{-1}} = C s^{\left(1 + \frac{k-l}{n-k-1}\right)^{-1} - \eps} = C s^{\frac{n-k-1}{n-l-1} - \eps}.
	\]
	Since one of our lower bounds was $r$, this completes the proof in the case $r > 0$.
	
	Finally, consider the case $r = 0$. In this case, estimate \eqref{eq:length_bound_second} yields the lower bound
	\[
		\len(\sigma') \geq \frac{Cs}{\left(\Lambda_G^V(r_{\min})\right)^{\frac{1}{n-k-1} + \eps'}} = C''s.
	\]
	If $s \geq 1$, this yields an inequality of the desired form since $s \geq s^{\frac{n-k-1}{n-l-1} - \eps}$.
\end{proof}

With Lemma \ref{lem:length_estimate} proven, we obtain the following simple corollary. The method of proof is due to Martio, see \cite[Section 5]{Martio1975paper_kperiod}.

\begin{cor}\label{cor:length_estimate_sup}
	Let $\eps > 0$, $k = \dim \Gamma$ and $l = \dim \Gamma_T$. Then there exist half-lines $L_1, L_2 \subset V^\perp$ starting from $0$ such that the following holds: if $\sigma'$ is a path from $L_1$ to $GL_2$, $s = \inf_t d(V, \sigma'(t))$ and $r' = \sup_t d(V, \sigma'(t))$, then we have 
	\[
		\len(\sigma') \geq C (r')^{\frac{n-k-1}{n-l-1} - \eps},
	\]
	whenever $s \geq 1$, where $C = C(G, V, \eps) > 0$.
\end{cor}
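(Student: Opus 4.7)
My plan is to select the same half-lines $L_1, L_2 \subset V^\perp$ produced by Lemma \ref{lem:length_estimate}, and then derive the desired estimate from two complementary lower bounds on $\len(\sigma')$: the bound in terms of $s$ supplied by Lemma \ref{lem:length_estimate}, and the trivial geometric bound $\len(\sigma') \geq r' - s$, which holds because the function $t \mapsto d(V, \sigma'(t))$ is $1$-Lipschitz and takes values as low as $s$ and as high as $r'$. Set $\alpha = \tfrac{n-k-1}{n-l-1} - \eps$. Note that $\alpha \leq 1$: since $\Gamma_T \leqslant \Gamma$ we have $l \leq k$ and therefore $\tfrac{n-k-1}{n-l-1} \leq 1$, so after subtracting the positive $\eps$ we get $\alpha < 1$.

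Given $s \geq 1$ (which forces $r' \geq s \geq 1$), I then split into two cases. In the case $s \geq r'/2$, Lemma \ref{lem:length_estimate} immediately gives
\[
	\len(\sigma') \geq C s^{\alpha} \geq C (r'/2)^{\alpha} = C' (r')^{\alpha},
\]
where the absorbed factor $2^{-\alpha}$ can be folded into the constant $C' = C'(G, V, \eps)$. In the case $s < r'/2$, the geometric bound yields
\[
	\len(\sigma') \geq r' - s > \frac{r'}{2} \geq \frac{(r')^{\alpha}}{2},
\]
where the last inequality uses $\alpha \leq 1$ together with $r' \geq 1$. In either case, we obtain a bound of the desired form $\len(\sigma') \geq C''(r')^{\alpha}$.

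I don't anticipate any real obstacle; the only point worth flagging is the verification that $\alpha \leq 1$, which is what lets the trivial length-difference bound dominate $(r')^{\alpha}$ in the case where $\sigma'$ travels far from $V$. Everything else is a one-line case split combining Lemma \ref{lem:length_estimate} with the Lipschitz estimate on the distance function to $V$.
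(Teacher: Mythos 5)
Your proof is correct and follows essentially the same approach as the paper: choose the $L_1, L_2$ from Lemma \ref{lem:length_estimate}, split into the cases $s \geq r'/2$ and $s < r'/2$, and in the latter case bound $\len(\sigma')$ from below by the total variation of the $1$-Lipschitz function $t \mapsto d(V,\sigma'(t))$. The paper's proof is terser (it just states $\len(\sigma') \geq r'/2$ and notes $r' \geq s \geq 1$), whereas you spell out the Lipschitz reasoning and the verification that $\tfrac{n-k-1}{n-l-1} - \eps \leq 1$, but the argument is the same.
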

\begin{proof}
	Let $L_1$ and $L_2$ be given by Lemma \ref{lem:length_estimate}, and suppose that $s \geq 1$. Then if $s \geq r'/2$, we immediately obtain the desired estimate since $\len(\sigma') \geq C s^{\frac{n-k-1}{n-l-1} - \eps}$. However, if instead $s \leq r'/2$, then $\len(\sigma') \geq r'/2$, which trivially implies an estimate of the desired form since $r' \geq s \geq 1$. The claim follows.
\end{proof}


\section{Proof of Theorem \ref{thm:generalized_martio_periodicity_result}}\label{sect:main_theorem}

Let $G \leqslant \Gamma$ be as specified in the beginning of Section \ref{sect:path_lifting_short}. We define the \emph{$G$-modulus} $M_G$ of a path family $\Upsilon \subset \R^n$ by
\[
	M_G(\Upsilon) = \inf_{\rho} \int_{D_G} \rho^n \dd m_n,
\]
where $D_G$ is a fundamental cell of $G$, and $\rho$ varies over all $\Upsilon$-admissible functions which are automorphic under $G$. Heuristically, $M_G$ is the standard conformal modulus on the quotient orbifold $\R^n / G$.

In order to follow Martio's proof, we use the following version of Poletskii's inequality. Since a smooth orientation preserving $\Delta$-bilipschitz map between $n$-dimensional oriented Riemannian manifolds is $\Delta^{2n}$-quasiconformal, the proof is essentially similar to Martio's corresponding proof \cite[Theorem 2.2]{Martio1975paper_kperiod} in the periodic case.

\begin{lemma}
	Suppose $N(f, D_G) < \infty$. Recall that $\psi = \iota \circ \varphi_a \circ f$. Then
	\[
		M(\psi \Upsilon) \leq (K \Delta^{2n})^{n-1} M_G(\Upsilon)
	\]
	for any path family $\Upsilon$ in $\R^n$, where $\Delta$ is the bilipschitz constant of $\varphi_a$.
\end{lemma}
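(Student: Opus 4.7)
The plan is to reduce the inequality to a standard Poletskii-type $K_I$-inequality for the descended quasiregular map on the orbifold quotient $U/G$. First, I would verify that $\psi$ is $K\Delta^{2n}$-quasiregular. The bilipschitz constant $\Delta$ of $\varphi_a$ gives $\smallnorm{D\varphi_a}\leq\Delta$ and $J_{\varphi_a}\geq\Delta^{-n}$ almost everywhere, so $\smallnorm{D\varphi_a}^n/J_{\varphi_a}\leq\Delta^{2n}$, i.e.\ $\varphi_a$ is $\Delta^{2n}$-quasiconformal. The inversion $\iota$ is conformal, hence $1$-quasiconformal. Because the dilatations multiply under composition with (orientation preserving) quasiconformal maps, $\psi=\iota\circ\varphi_a\circ f$ is $K\Delta^{2n}$-quasiregular, which gives
\[
	K_I(\psi)\leq (K\Delta^{2n})^{n-1}.
\]

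Next, I would push the inequality to the quotient. Since $f$, and therefore $\psi$, is automorphic under $G$, and since $GU=U$ (as established in Section \ref{sect:path_lifting_short}), the map $\psi$ descends to a quasiregular map $\psi_G\colon U_G\to U'$ with the same local distortion data as $\psi$. In particular $\psi_G$ is $K\Delta^{2n}$-quasiregular, and $N(\psi_G,U_G)\leq N(f,D_G)<\infty$. Given a $\Upsilon$-admissible density $\rho$ that is $G$-automorphic, let $\rho_G$ be its descent to $U_G$ and let $\Upsilon_G=\pi_G\Upsilon$ be the family of projected paths. Then the identity $\int_{\pi_G\circ\gamma}\rho_G=\int_\gamma\rho\geq 1$ for each $\gamma\in\Upsilon$ shows that $\rho_G$ is admissible for $\Upsilon_G$, and a fundamental-cell change of variables yields
\[
	\int_{U_G}\rho_G^n\,\mathrm{d}m = \int_{D_G}\rho^n\,\mathrm{d}m_n.
\]

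Finally, I would apply the standard $K_I$ form of Poletskii's inequality (as in Rickman's book, noting that the orbifold singular set of $U_G$ has measure zero and hence does not affect modulus computations) to the map $\psi_G$ and the family $\Upsilon_G$:
\[
	M(\psi_G\Upsilon_G)\leq K_I(\psi_G)\,M(\Upsilon_G) \leq (K\Delta^{2n})^{n-1}\int_{U_G}\rho_G^n\,\mathrm{d}m.
\]
Since $\psi_G\Upsilon_G=\psi\Upsilon$, taking the infimum over all $G$-automorphic admissible $\rho$ yields the desired bound $M(\psi\Upsilon)\leq(K\Delta^{2n})^{n-1}M_G(\Upsilon)$.

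The main technical point is the passage to the orbifold quotient: one has to ensure that the standard Poletskii inequality is valid for $\psi_G$ on $U_G$ despite the presence of isotropy points of $G$. However, the set of such points has measure zero in $U_G$ (it is a countable union of proper affine subspaces projected down), so it neither affects the integral $\int\rho_G^n$ nor the set of locally rectifiable paths relevant for modulus; the orbifold version of Poletskii's inequality follows by restricting to the smooth part of $U_G$. Everything else is a mechanical adaptation of Martio's argument in \cite[Theorem 2.2]{Martio1975paper_kperiod} from the periodic to the $G$-automorphic setting.
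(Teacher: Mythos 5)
Your proposal is correct in outline but takes a different route from the paper. The paper proves the lemma by direct reference to Martio's argument in \cite[Theorem 2.2]{Martio1975paper_kperiod}: one integrates the admissible $G$-automorphic density over the fundamental cell $D_G \subset \R^n$ and runs Rickman's Poletskii argument (absolute continuity on a.e.\ path, change of variables, Jacobian estimate) directly there, never forming a quotient space. You instead pass explicitly to the orbifold $U_G = U/G$ and invoke the $K_I$-inequality for the descended map $\psi_G \colon U_G \to U'$. This is cleaner conceptually, and your preliminary steps are all correct: the dilatation bound $\smallnorm{D\varphi_a}^n/J_{\varphi_a}\leq\Delta^{2n}$, the fact that $\iota$ is conformal and dilatations multiply, the identification $\int_{U_G}\rho_G^n = \int_{D_G}\rho^n$ for automorphic $\rho$ (after setting $\rho=0$ off $U$, which costs nothing), and the observation $\psi_G\Upsilon_G = \psi\Upsilon$.

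The one place you pass too quickly is the orbifold Poletskii inequality itself. You assert that the singular set of $U_G$ ``has measure zero and hence does not affect modulus computations,'' but this glosses over a real point: $G$ can contain elements whose linear part acts by a reflection on $V^\perp$, so the fixed-point locus can have codimension $1$ in $\R^n$, and the family of paths in $\Upsilon_G$ meeting a codimension-$1$ set is not automatically exceptional (of zero modulus). One cannot simply delete those paths. What actually saves the argument is that Poletskii's inequality is proved from local properties (ACL$^n$, condition $(N)$, local change of variables with Jacobian), which are insensitive to a measure-zero set in the domain of integration; so one re-runs the proof of \cite[Theorem II.8.1]{Rickman1993book} on the smooth locus of $U_G$ rather than citing the manifold case black-box. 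That bookkeeping is precisely what Martio's direct computation over $D_G$ does, and where the hypothesis $N(f,D_G)<\infty$ enters. So the two routes are honestly the same work once the orbifold point is handled, but as written your proof leaves that step as a gesture rather than a proof.
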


Now we may finally complete the proof of Theorem \ref{thm:generalized_martio_periodicity_result}, following again Martio's corresponding steps in the periodic case but replacing the estimates with ones provided by Corollary \ref{cor:length_estimate_sup}. As previously, we begin by recalling the statement.

\begin{customthm}{\ref{thm:generalized_martio_periodicity_result}}
	Let $M$ be a closed oriented Riemannian $n$-manifold, and let $f \colon \R^n \to M$ be quasiregular. Suppose that $h$ is automorphic with respect to a discrete subgroup $\Gamma$ of the isometry group $E(n)$ of $\R^n$, and let $\Gamma_T$ denote the subgroup of translations of $\Gamma$. If $1 \leq \dim \Gamma \leq n-2$ and
	\[
		\frac{\dim \Gamma_T}{\dim \Gamma} > \frac{1}{n-\dim \Gamma},
	\]
	then $f$ has infinite multiplicity in a fundamental cell of $\Gamma$.
\end{customthm}

\begin{proof}
	Suppose to the contrary that $N(f, D_G) < \infty$, which by Lemma \ref{lem:cover_by_fcells} implies also that $N(f, F) < \infty$. Let $k = \dim \Gamma$ and $l = \dim \Gamma_T$.
	
	Suppose that $\R^n \setminus B^n(r_0) \subset U'$. We may then also assume that $r_0$ is such that, for any $r \geq r_0$, the unbounded component of $\psi^{-1} (\R^n \setminus B^n(r))$ does not meet $B(V, 1)$. Indeed, we may select $R > 1$ for which $\partial B(V, R) \subset U$ and $f^{-1}\{a\} \subset B(V, R)$. Then $\psi(\partial B(V, R))$ is a compact subset of $U'$, and a selection of $r_0$ for which $\psi(\partial B(V, R)) \subset B^n(r_0)$ yields the desired property of $r_0$.
	
	For every $r \geq r_0$, let $L_{1,r}$ and $L_{2,r}$ be the half-lines given by Corollary \ref{cor:length_estimate_sup}. Next, let $\Upsilon_r$ be the family of paths given by Lemma \ref{lem:lifting_result} for $L_{1,r}$ and $L_{2,r}$. Finally, let $\Upsilon = \bigcup_{r \geq r_0} \Upsilon_r$, and let $\Upsilon'$ be the corresponding family of lifts.
	
	We note that $M(\Upsilon) \leq M(\psi\Upsilon')$. By reasoning identical to that of Rickman and Martio, the estimate on $M^S_n(\Upsilon_r)$ given by Lemma \ref{lem:lifting_result} yields that $M(\Upsilon) = \infty$. By the Poletskii's inequality for the $M_G$-modulus, we must also have $M_G(\Upsilon') = \infty$. Our goal is to show that $M_G(\Upsilon') < \infty$, thereby arriving at a contradiction.
	
	Let $\sigma' \in \Upsilon'$ be a lift, let $s = \inf_t d(V, \sigma'(t))$, and let $r' = \sup_t d(V, \sigma'(t))$. Note that we now have $r' \geq s \geq 1$, since our selection of $r_0$ was such that the lifts cannot meet $B(V, 1)$. By Corollary \ref{cor:length_estimate_sup},
	\[
		\len(\sigma') \geq C (r')^{\frac{n-k-1}{n-l-1} - \eps}.
	\]
	Consider the function $\rho \colon \R^n \to \R$ given by
	\[
		\rho(x) = 
		\begin{cases}
			\frac{1}{C d(x, V)^{\frac{n-k-1}{n-l-1} - \eps}},
				& \text{if } d(x, V) \geq 1,\\
			0, & \text{if } d(x, V) < 1.
		\end{cases}
	\]
	Then clearly
	\[
		\int_{\sigma'} \rho \geq 1,
	\]
	and hence $\rho$ is admissible for the family $\Upsilon'$. By Lemma \ref{lem:distance_preserving_fact}, $\rho$ is automorphic under $G$, and hence $\rho$ is admissible for the $M_G$-modulus.
	
	However, now
	\begin{align*}
		M_G(\Upsilon') 
		&\leq \int_{D_G} \rho^n \dd m_n
		\leq \int_{F} \rho^n \dd m_n\\
		&\leq C \mathcal{H}^{k}(\proj_V(F)) \mathcal{H}^{n-k-1}(S^{n-k-1}) 
			\int_1^\infty t^{(n-k-1) -n\left(\frac{n-k-1}{n-l-1}\right) + n\eps} \dd t.
	\end{align*}
	This upper bound is finite if
	\[
		n-k-1 -n\left(\frac{n-k-1}{n-l-1}\right) + n\eps < -1,
	\]
	or alternatively, since we may select $\eps$ as small as desired,
	\[
		n-k-n\left(\frac{n-k-1}{n-l-1}\right) < 0.
	\]
	The claim therefore follows, since this inequality is easily seen to be equivalent with the assumed
	\begin{align*}
		\frac{l}{k} &> \frac{1}{n-k}.
	\end{align*}
\end{proof}

\section{Proof of Theorem \ref{thm:Sylvesters_version}}\label{sect:sylvesters_improvement}

In this section, we present the modification to the proof of Theorem \ref{thm:generalized_martio_periodicity_result} which yields Theorem \ref{thm:Sylvesters_version}. The modification was pointed out to us by Eriksson-Bique. As previously, we recall the statement of the theorem.

\begin{customthm}{\ref{thm:Sylvesters_version}}
	Let $M$ be a closed oriented Riemannian $n$-manifold, and let $f \colon \R^n \to M$ be quasiregular. Suppose that $h$ is automorphic with respect to a discrete subgroup $\Gamma$ of the isometry group $E(n)$ of $\R^n$. If $0 < \dim \Gamma < n-2$, then $f$ has infinite multiplicity in a fundamental cell of $\Gamma$.
\end{customthm}

The proof is by replacing Lemma \ref{lem:length_estimate} with the following lemma.

\begin{lemma}\label{lem:length_estimate_orbit_version}
	Let $\eps > 0$ and $k = \dim \Gamma$. Suppose that $k < n - 2$. Then there exist half-lines $L_1, L_2 \subset V^\perp$ starting from $0$ such that the following holds: if $\sigma'$ is a path from $L_1$ to $GL_2$ and $s = \inf_t d(V, \sigma'(t))$, then we have 
	\[
		\len(\sigma') \geq \sqrt{2} s,
	\]
	whenever $s > 0$.
\end{lemma}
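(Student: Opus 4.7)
The plan is to exploit the abelian structure of $G$ (which, by Theorem \ref{thm:isometry_group_fact}, we may assume) together with the dimension hypothesis $\dim V^\perp = n-k \geq 3$ to place $L_1$ and $L_2$ in two mutually orthogonal $G$-invariant directions of $V^\perp$; a direct Pythagorean computation then gives the estimate. First, I would perform an orientation-preserving isometric coordinate change so that $V$ is linear, and fix $(G,V)$ with $G$ abelian. With $V$ linear, Lemma \ref{lem:identity_restriction} gives $\ort(\gamma)|_V = \id_V$ for every $\gamma \in G$, so $\ort(\gamma)$ preserves $V^\perp$; likewise the proof of Lemma \ref{lem:comparability_sublemma_2} with $d(0,V) = 0$ yields $\tran(\gamma) = \tran_V(\gamma) \in V$ for all $\gamma \in G$.

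Next, I would consider the map $\rho \colon G \to O(V^\perp)$, $\gamma \mapsto \ort(\gamma)|_{V^\perp}$, which is a homomorphism with abelian image. Let $H \leqslant O(V^\perp)$ be the closure of $\rho(G)$; then $H$ is a compact abelian Lie group. Since the only real irreducible representations of a compact abelian Lie group have real dimension $1$ or $2$, the orthogonal representation $V^\perp$ of $H$ admits an orthogonal decomposition $V^\perp = W_1 \oplus \cdots \oplus W_r$ into $H$-invariant irreducible summands with $\dim W_i \in \{1,2\}$. The hypothesis $k < n-2$ forces $r \geq 2$, so at least two orthogonal invariant summands are available. I would pick unit vectors $u \in W_1$ and $e \in W_2$, and set $L_1 = [0,\infty) e$, $L_2 = [0,\infty) u$. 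By $H$-invariance of $W_1$, one has $\ort(\gamma) u \in W_1 \perp W_2 \ni e$ for every $\gamma \in G$.

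The length estimate then follows from a two-step Pythagorean calculation. A path $\sigma'$ from $L_1$ to $\gamma L_2$ has endpoints of the form $\sigma'(0) = s_1 e$ and $\sigma'(1) = \tran_V(\gamma) + r\, \ort(\gamma) u$, where $s_1 = d(\sigma'(0), V) \geq s$ and $r = d(\sigma'(1), V) \geq s$. The orthogonality of $\tran_V(\gamma) \in V$ to $s_1 e - r\, \ort(\gamma) u \in V^\perp$, combined with $\langle e, \ort(\gamma) u \rangle = 0$, gives
\[
	\abs{\sigma'(0) - \sigma'(1)}^2
	= \abs{s_1 e - r\, \ort(\gamma) u}^2 + \abs{\tran_V(\gamma)}^2
	\geq s_1^2 + r^2
	\geq 2s^2,
\]
so that $\len(\sigma') \geq \abs{\sigma'(0) - \sigma'(1)} \geq \sqrt{2}\, s$, as desired. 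The main conceptual step, and the core of Eriksson-Bique's improvement, is the observation that an abelian compact subgroup of $O(V^\perp)$ cannot act irreducibly on $V^\perp$ once $\dim V^\perp \geq 3$; this provides the uniform orthogonality $\langle e, \ort(\gamma) u \rangle = 0$ over all $\gamma \in G$, which was previously unavailable and around which the delicate growth-based argument of Lemma \ref{lem:length_estimate} had to work.
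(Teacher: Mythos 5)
Your proposal is correct and takes essentially the same approach as the paper: you decompose $V^\perp$ into pairwise-orthogonal $G$-invariant subspaces of dimension at most $2$ using abelianness of $G$, place $L_1$ and $L_2$ in two distinct summands so that $L_1 \perp \ort(\gamma) L_2$ for all $\gamma \in G$, and conclude by the Pythagorean estimate. The only cosmetic difference is that you justify the invariant decomposition via representation theory of the compact abelian Lie group $\overline{\rho(G)}$, whereas the paper uses simultaneous diagonalization of commuting complex-diagonalizable real matrices — two packagings of the same linear-algebraic fact.
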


Once Lemma \ref{lem:length_estimate_orbit_version} is proven, a similar version of Corollary \ref{cor:length_estimate_sup} follows with exponent 1, and the rest of the proof proceeds as in Martio's original version, as explained in Section \ref{sect:main_theorem}. It therefore remains to prove Lemma \ref{lem:length_estimate_orbit_version}

\begin{proof}
	We note that the matrix parts of $\gamma \in G$ act on $V^\perp$ orthogonally. That is, by fixing an orthonormal basis in $V^\perp$, we have a homomorphism $\kappa \colon G \to O(n-k)$ which takes $\gamma \in G$ to $\ort(\gamma) \vert_{V^\perp}$ written in the selected basis. Since $G$ is abelian, it follows that $\kappa(G)$ is an abelian subgroup of $O(n-k)$. Moreover, the elements of $O(n-k)$ are complex diagonalizable real matrices. 
	
	Recall that a set of diagonalizable matrices is simultaneously diagonalizable if and only if its elements commute with each other. Hence, it follows that we may simultaneously diagonalize $\kappa(G)$. Consequently, $V^\perp$ splits into a direct sum $V^\perp = Q_1 \oplus \dots \oplus Q_l$, where $Q_i$ are $\ort(G)$-invariant subspaces of dimension 1 or 2. Moreover, since orthogonal matrices have pairwise orthogonal eigenspaces, we may assume that $Q_i$ are pairwise orthogonal.
	
	By our assumption $k < n-2$, we have $\dim V^\perp > 2$, so there exists more than one space $Q_i$. Therefore, we may select $L_1$ in a direction contained in $Q_1$, and $L_2$ in a direction contained in $Q_2$. It follows that for all $\gamma \in G$, the half-lines $L_1$ and $\gamma(L_2)$ are orthogonal. Hence, if $\sigma'$ is a path from $L_1$ to $GL_2$, then $\len(\sigma') \geq \sqrt{2} s$.
\end{proof}


\end{document}